\newcommand{\arxiv}[1]{\href{http://arxiv.org/abs/#1}{\tt arXiv:\nolinkurl{#1}}}
\newtheorem{Theorem}{Theorem}[section]
\newtheorem{Proposition}[Theorem]{Proposition} 
\newtheorem{Lemma}[Theorem]{Lemma}
\newtheorem{Corollary}[Theorem]{Corollary}
\theoremstyle{definition}
\newtheorem{Definition}[Theorem]{Definition}
\newtheorem{Remark}[Theorem]{Remark}
\newcommand{\MV}{\mathcal MV}
\newcommand{\eps}{{\varepsilon}}
\newcommand{\oa}{{\overline a}}
\newcommand{\ob}{{\overline b}}
\newcommand{\oc}{{\overline c}}
\newcommand{\od}{{\overline d}}
\newcommand{\olambda}{\overline \lambda}
\newcommand{\omu}{\overline \mu}
\newcommand{\wt}{\operatorname{wt}}
\newcommand{\bz}{\mathbb{Z}}
\newcommand{\br}{\mathbb{R}}
\newcommand{\bn}{\mathbb{N}}
\newcommand{\g}{\mathfrak{g}}
\newcommand{\asl}{\widehat{\mathrm{sl}}}
\newcommand{\Id}{\text{Id}}
\theoremstyle{definition}
\begin{document} 


\title[$\asl_2$ MV polytopes]{Rank 2 affine MV polytopes}

\author{Pierre Baumann}
\address
{Pierre Baumann
Institut de Recherche Math\'ematique Avanc\'ee,
Universit\'e de Strasbourg et CNRS,
7 rue Ren\'e Descartes,
67084 Strasbourg Cedex,
France.
}
\email{p.baumann@unistra.fr}

\author{Thomas Dunlap}
\address
{
Thomas Dunlap,
Einstein Institute of Mathematics, 
The Hebrew University of Jerusalem,
Jerusalem, 91904, Israel.
}
\email{tdunlap@umich.edu }

\author{Joel Kamnitzer}
\address{
Joel Kamnitzer,
Dept. of Mathematics,
U. of Toronto,
Toronto, ON, M5S 2E4
Canada.}
\email{jkamnitz@math.toronto.edu}

\author{Peter Tingley}
\address{Peter Tingley,
MIT dept. of math,
77 Massachusetts Ave,
Cambridge, MA, USA 02139.}
\email{ptingley@math.mit.edu}

\thanks{P.B. acknowledges support from the ANR, project ANR-09-JCJC-0102-01. T.D. acknowledges support from the ERC, project \#247049(GLC). J.K. acknowledges
support from NSERC.  P.T. acknowledges support from the NSF postdoctoral fellowship DMS-0902649.}

\begin{abstract}

We give a realization of the crystal $B(-\infty)$ for $\asl_2$ using decorated polygons. The construction and proof are combinatorial, making use of Kashiwara and Saito's characterization of $B(-\infty)$ in terms of the $*$ involution. The polygons we use have combinatorial properties suggesting they are the $\asl_2$ analogues of the Mirkovi\'c-Vilonen polytopes defined by Anderson and the third author in finite type. 
Using Kashiwara's similarity of crystals we also give MV polytopes for $A_2^{(2)}$, the only other rank two affine Kac-Moody algebra.  
\end{abstract}

\subjclass[2010]{Primary: 	05E10; Secondary: 17B67, 52B20}
\maketitle

\setcounter{tocdepth}{1} 
\tableofcontents
\section{Introduction}

Fix a symmetrizable Kac-Moody algebra $\g$. Kashiwara's theory of crystals constructs combinatorial data (a set  $B(-\lambda)$ along with some operations) corresponding to each integrable lowest weight representation $V(-\lambda)$, which records certain leading order behavior of the representation. The crystals $B(-\lambda)$ form a directed system whose limit $B(-\infty)$ can be thought of as the crystal for $U^+(\g)$. This theory makes heavy use of the quantized universal enveloping algebra $U_q(\g)$ associated with $\g$, but there are also combinatorial characterizations of $B(-\infty)$, and it can often be realized by more elementary means. 

When $\g$ is of finite type, there is a realization of $B(-\infty)$ using Mirkovi\'c-Vilonen (MV) polytopes, developed by Anderson \cite{Anderson:2003} and the third author \cite{Kamnitzer:2006, Kamnitzer:2010}. In this paper, we realize $B(-\infty)$ for $\asl_2$ using a collection of decorated polytopes in the $\asl_2$ root lattice.
Our construction has many properties in common with finite type, which we take as evidence that we have found the correct generalization of MV polytopes in type $\asl_2$: 
\begin{enumerate}
\item An MV polytope is uniquely determined by either its right or left side, and any candidate for one side does correspond to an MV polytope.  
\item MV polytopes are defined using systems of non-intersecting diagonals (Definition \ref{def:characterization}).  This is the generalization of a known characterization for types $A_2$ and $B_2$ (see \cite[Section 3.4]{Kamnitzer:2010}). 
\item Kashiwara's involution on $B(-\infty)$ is negation. 
\item The crystal operator $e_1$ (respectively $e_0$) increases the length of the top edge on the left (resp. right) side by 1, and otherwise does not affect that side. This implies that $\varphi_i$ and $ \varphi_i^* $ are given by the lengths of the top and bottom edges.

\item For a dominant weight $ \Lambda $, the lowest weight crystal $B(-\Lambda) \subset B(-\infty)$ can be characterized as the set of MV polytopes such that, if the bottom vertex is placed at $-\Lambda$, the whole polytope is contained in the convex hull of the Weyl group orbit of $-\Lambda$.
\end{enumerate}

Two recent works motivated and informed this paper. The first is the construction by the second author \cite{Dunlap:2010} of a conjectural realization of $B(-\infty)$ for $\asl_2$ using decorated lattice polytopes. Here we use new definitions for both the set of polytopes and the crystal operators, but our setup was motivated by this earlier work, and we expect they are equivalent.

The second is the construction by the other three authors \cite{BKT:2011} of MV polytopes (for symmetric finite and affine Lie algebras) associated to components of Lusztig's nilpotent varieties.  We believe that the polytopes defined here essentially agree with the $\asl_2$ case of that construction, and we plan to address this issue in future work. As discussed in \cite{BKT:2011}, this would give a combinatorial description of MV polytopes in all symmetric affine cases.

Finally, we would like to mention the  
work of Naito, Sagaki and Saito \cite{NSS:2010} (see also Muthiah \cite{Muthiah:??}) giving a version of MV polytopes for $\asl_n$, and in particular $\asl_2$. That construction seems to be quite different from the one given here, and it would be interesting to understand the relationship between them. 

\subsection{Acknowledgments}
We would like to thank Dinakar Muthiah for helpful conversations.

\section{Background}

\subsection{Crystals}

Fix a symmetrizable Kac-Moody algebra (which for most of this paper will be $\asl_2$) and let $\Gamma = (I, E)$ be its Dynkin diagram.
We are interested in the crystals $B(-\lambda)$ and $B(-\infty)$ associated with the lowest weight representation $V(-\lambda)$ and with $U^+(\g)$ respectively. These are combinatorial objects arising from the theory of crystal bases for the corresponding quantum group (see e.g. \cite{Kashiwara:1995}). This section contains a brief explanation of the results we need, roughly following \cite{Kashiwara:1995} and \cite{Hong&Kang:2000}, to which we refer the reader for details. We begin with a combinatorial notion of crystal that includes many examples which do not arise from representations. 

\begin{Definition} (see \cite[Section 7.2]{Kashiwara:1995}) A {\it combinatorial crystal} is the data $(B, e_i, f_i, \varepsilon_i, \varphi_i, \wt)$ of a set $B$ along with functions $\wt \colon B \to P$ (where $P$ is the weight
  lattice), and, for each $i \in I$, $\varepsilon_i, \varphi_i \colon B \to \bz \cup \{-\infty\}$ and $e_i, f_i: B \rightarrow B \sqcup \{ \emptyset \}$, such that
  \begin{enumerate}
  \item $\varphi_i(b) = \varepsilon_i(b) + \langle \wt(b), \alpha_i^\vee \rangle$.
  \item $e_i$ increases $\varphi_i$ by 1, decreases $\eps_i$ by 1 and increases $\wt$ by $\alpha_i$.
  \item $f_i b = b'$ if and only if $e_i b' = b$.
  \item If $\varphi_i(b) = -\infty$, then $e_i b = f_i b = \emptyset$.
  \end{enumerate}
We often denote a combinatorial crystal by its underlying set $B$, suppressing the other data.
\end{Definition}

\begin{Definition}
A morphism 
of combinatorial crystals 
is a map $\phi: B \rightarrow C$ of sets which commutes with all the structure (this is called a strict morphism in e.g. \cite{KS:1997}).   
\end{Definition}

\begin{Definition}
Let $B$ and $C$ be combinatorial crystals. The tensor product $B \otimes C$ is the cartesian product $B \times C$ with crystal operators $e_i, f_i$ defined by
\begin{equation*}
f_i (b \otimes c) = 
\begin{cases}
f_i(b) \otimes c \quad \text{ if $\eps_i(b) \geq \varphi_i(c)$} \\
b \otimes f_i(c) \quad \text{ if $\eps_i(b) < \varphi_i(c)$} 
\end{cases}
\quad \text{and} \quad
e_i (b \otimes c) = 
\begin{cases}
e_i(b) \otimes c \quad \text{ if $\eps_i(b) > \varphi_i(c)$} \\
b \otimes e_i(c) \quad \text{ if $\eps_i(b) \leq \varphi_i(c).$} 
\end{cases}
\end{equation*} 
The rest of the data is given by
$\wt(b \otimes c) = \wt(b) + \wt(c)$,
$\varphi_i(b \otimes c)= \max \{ \varphi_i(b), \varphi_i(c) + \langle \wt(b), \alpha_i^\vee \rangle \} $, and 
$\eps_i(b \otimes c)= \max \{ \eps_i(c), \eps_i(b) - \langle  \wt(c), \alpha_i^\vee \rangle \}.$
\end{Definition}

\begin{Definition}
\label{def:lwcrystal}
A {\it lowest weight} combinatorial crystal is a combinatorial crystal 
which has a distinguished element $b_-$ (the lowest weight element) such that 
\begin{enumerate}
\item The lowest weight element $b_-$ can be reached from any $b \in B$ by applying a sequence of $f_i$  for various $i \in I$. 
\item For all $b \in B$ and all $i \in I$, $\varphi_i(b) = \max \{ n : f_i^n(b) \neq \emptyset \}$.
\end{enumerate}
  \end{Definition}

\begin{Definition}[see {\cite[Section 7.5]{Kashiwara:1995}}]
Let $B^{(i)}$ be the crystal
\[
\cdots \xrightarrow{i} b^{(i)}(1) \xrightarrow{i} b^{(i)}(0) \xrightarrow{i}
b^{(i)}(-1) \xrightarrow{i} b^{(i)}(-2) \xrightarrow{i} \cdots 
\]
where $\wt(b^{(i)}(k))=k\alpha_i$, $\varphi_i(b^{(i)}(k)) = k$, $\varepsilon_i(b^{(i)}(k))=-k$,
and for $j \neq i$, $\varphi_j(b^{(i)}(k))= \eps_j(b^{(i)}(k))=-\infty.$
Here the arrows show the action of $f_i$.
\end{Definition}

The following is a specialization of a result of Kashiwara and Saito \cite[Proposition 3.2.3]{KS:1997}. 

\begin{Theorem} \label{th:comb-characterizaton}
Let $B$ be a lowest weight combinatorial crystal. Fix an involution $*$ on $B$, and define $f_i^*=*f_i*$ and $\varphi_i^*(b)= \varphi_i(*b)$. Define $\Phi_i: B \rightarrow B \otimes B^{(i)}$ by 
$$\Phi_i(b) = (f_i^*)^{\varphi_i^*(b)}(b) \otimes b^{(i)}(\varphi_i^*(
    b)).$$
If $\Phi_i$ is a morphism of crystals for all $i$ then $B\simeq B(-\infty)$.
\qed
\end{Theorem}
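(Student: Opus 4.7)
The plan is to construct an isomorphism $\psi \colon B \to B(-\infty)$ by induction on the height $|\wt(b)|$, exploiting that the analogous map $\Phi'_i \colon B(-\infty) \to B(-\infty) \otimes B^{(i)}$ is itself a strict morphism of crystals (this is the half of the Kashiwara--Saito picture established by Kashiwara, which we may take as given). Some bookkeeping comes first: the hypothesis that each $\Phi_i$ is a crystal morphism forces the $*$-involution to preserve weight, so the unique minimum-weight element $b_-$ satisfies $*b_- = b_-$; then for any $b \neq b_-$ one has $*b \neq b_-$, so there is some $i$ with $\varphi_i^*(b) = \varphi_i(*b) > 0$. This provides the lowering step for the recursion.

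I would set $\psi(b_-) := b'_-$. For $b \neq b_-$, choose $i$ with $\varphi_i^*(b) > 0$ and form $b^\circ := (f_i^*)^{\varphi_i^*(b)}(b)$; since $b^\circ$ has strictly smaller height, $\psi(b^\circ)$ is already defined by induction. Both $\Phi_i$ and $\Phi'_i$ are automatically injective, because from the datum $b^\circ \otimes b^{(i)}(k)$ one recovers $b$ as $(e_i^*)^k b^\circ$. Hence there is a unique element $b^\dagger \in B(-\infty)$ with
\[
\Phi'_i(b^\dagger) \;=\; \psi(b^\circ) \otimes b^{(i)}\!\bigl(\varphi_i^*(b)\bigr),
\]
and I would define $\psi(b) := b^\dagger$.

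The key obstacle is showing this $\psi(b)$ does not depend on the choice of $i$. When $i \neq j$ both satisfy $\varphi_i^*(b), \varphi_j^*(b) > 0$, the two candidate values must be shown to coincide. The essential tool is that both $\Phi_i$ and $\Phi_j$ are strict morphisms, so the compositions $(\Phi_j \otimes \id_{B^{(i)}}) \circ \Phi_i$ and $(\Phi_i \otimes \id_{B^{(j)}}) \circ \Phi_j$ land, up to an obvious swap of tensor factors, in the same triple tensor product $B \otimes B^{(i)} \otimes B^{(j)}$; the same is true on the $B(-\infty)$ side. Unwinding these double factorizations by the tensor product rule and the inductive hypothesis, and using injectivity of the $\Phi$'s, forces the two candidates to agree. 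This is the core technical content and is where the proof stands or falls.

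Once well-definedness is settled, checking that $\psi$ commutes with each $f_j$, $e_j$ and preserves $\wt$, $\varphi_j$, $\varepsilon_j$ is a direct induction: the defining identity for $\psi(b)$ translates applications of $f_j$ on $B$ to the corresponding operations on $B(-\infty)$ via the morphism property of $\Phi_i$ and $\Phi'_i$, combined with the tensor product formula. Finally, running the symmetric construction with the roles of $B$ and $B(-\infty)$ reversed produces an inverse $\psi^{-1}$, so $\psi$ is a bijection and hence a crystal isomorphism $B \simeq B(-\infty)$.
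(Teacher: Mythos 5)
The paper offers no proof of this statement to compare against: it is imported verbatim (note the \qed inside the theorem environment) as a specialization of Kashiwara--Saito \cite[Proposition 3.2.3]{KS:1997}. Judged on its own, your proposal is a sensible plan but not a proof, because the two steps carrying all the content are left open. First, the recursive definition of $\psi(b)$ presupposes that $\psi(b^\circ)\otimes b^{(i)}\bigl(\varphi_i^*(b)\bigr)$ lies in the image of $\Phi_i'$. That image consists of the elements $c\otimes b^{(i)}(n)$ with $\varphi_i^*(c)=0$, where now $*$ is Kashiwara's involution on $B(-\infty)$; so you need $\varphi_i^*(\psi(b^\circ))=0$, i.e.\ that $\psi$ intertwines the two a priori unrelated $*$-structures, or at least the functions $\varphi_i^*$. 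Your inductive hypothesis (that $\psi$ is defined on smaller height and is a partial crystal morphism) does not give this; the induction must be strengthened to carry $*$-compatibility, and establishing that added clause is of the same order of difficulty as the well-definedness problem itself.

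Second, the independence of the choice of $i$ --- which you rightly flag as where the proof stands or falls --- is only gestured at, and the gesture is too optimistic. The morphism property of $\Phi_i$ does control the interaction of $f_i^*$ with the \emph{unstarred} operators: comparing $B^{(i)}$-components in $\Phi_i(f_jb)=f_j\Phi_i(b)$ gives $\varphi_i^*(f_jb)=\varphi_i^*(b)$ and $(f_i^*)^{\varphi_i^*(b)}(f_jb)=f_j(f_i^*)^{\varphi_i^*(b)}(b)$ for $j\neq i$. But it says nothing direct about the interaction of $f_i^*$ with $f_j^*$: these are $*$-conjugates of $f_i$ and $f_j$, which do not commute in a general crystal, and $\varphi_j^*$ is not invariant under $f_i^*$. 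Consequently the first tensor factors of $(\Phi_j\otimes\id)\circ\Phi_i$ and $(\Phi_i\otimes\id)\circ\Phi_j$, namely $(f_j^*)^{m'}(f_i^*)^{n}(b)$ and $(f_i^*)^{n'}(f_j^*)^{m}(b)$, need not match up termwise, and ``unwinding the double factorizations and using injectivity'' does not close the argument as stated. This recognition theorem genuinely requires the longer induction of Kashiwara--Saito (or an equivalent uniqueness argument for lowest weight crystals equipped with a family of strict embeddings $\Phi_i$); either cite it, as the paper does, or reproduce that argument in full.
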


It is shown in \cite[Theorem 2.2.1]{Kashiwara:1993} that there is an involution $*$ ({\it Kashiwara's involution}) on $B(-\infty)$ such that the conditions of Theorem \ref{th:comb-characterizaton} hold for $(B(-\infty), *)$. Furthermore $*$ is uniquely characterized by the conditions of Theorem \ref{th:comb-characterizaton}, as these conditions uniquely determine the operators $f_i^*$ and hence the operators $e_i^*$, and $*$ is determined by $*(e_{i_K} \cdots  e_{i_1} v_-)= e^*_{i_K} \cdots  e^*_{i_1} v_-$ for all $i_1, \ldots i_K \in I.$ The involution $*$ also has a simple algebraic interpretation, which can be found in \cite[Section 8.3]{Kashiwara:1995} (see also \cite[Theorem 14.4.3]{Lusztig:1993}).

It will actually be convenient for us to use the following dual version of Theorem \ref{th:comb-characterizaton}, where the roles of the unstarred and starred crystal operators are reversed. 

\begin{Corollary} \label{cor:comb-characterizaton2}
Let $B$ be a lowest weight combinatorial crystal. Fix an involution $*$ on $B$. We abuse notation and also denote the involution $* \otimes \Id$ of $B \otimes B^{(i)}$ by $*$. 
Define $\Phi_i: B \rightarrow B \otimes B^{(i)}$ by
$$\Phi_i(b) = (f_i)^{\varphi_i(b)}(b) \otimes b^{(i)}(\varphi_i(
    b)).$$
If $\Phi_i$ commutes with all the operators $f_i^*:= *  f_i  *$, then $B\simeq B(-\infty)$. Equivalently, if the following three conditions hold, then $B\simeq B(-\infty)$:
\begin{enumerate}
\item If $i \neq j$ then, for all $b \in B$, $f_i^*f_j(b)= f_jf_i^*( b)$, 

\item If $\eps_i^*( (f_i)^{\varphi_i(b)}(b)) < \varphi_i( b)$ then $f_i(b) = f_i^*(b)$, and

\item If $\eps_i^*( (f_i)^{\varphi_i(b)}(b)) \geq \varphi_i( b)$ then  $f_i^k f_i^*(b) = f_i^*f_i^k(b)$ for all $k \geq 0$. 

\end{enumerate}
\end{Corollary}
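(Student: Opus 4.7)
My approach is to reduce the corollary to Theorem \ref{th:comb-characterizaton} by conjugating the crystal structure on $B$ by the involution $*$. Given $(B, *)$ satisfying the corollary's hypotheses, define a new combinatorial crystal structure $B'$ on the same underlying set by swapping starred and unstarred operators: $f_i' := f_i^*$, $e_i' := e_i^*$, $\varphi_i' := \varphi_i^*$, $\varepsilon_i' := \varepsilon_i^*$, while retaining the involution $*$ itself. One verifies that $B'$ is a lowest weight combinatorial crystal with lowest weight element $*(b_-)$, and that the starred operators of $B'$ are precisely the original $f_i$ of $B$. Hence the map $\Phi_i$ of the corollary (applied to $B$) coincides with the map $\Phi_i$ of the theorem (applied to $B'$), and the hypothesis ``$\Phi_i$ commutes with $f_j^*$ on $B \otimes B^{(i)}$'' becomes ``$\Phi_i$ is a morphism of combinatorial crystals on $B'$''. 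Theorem \ref{th:comb-characterizaton} then yields $B' \simeq B(-\infty)$; since $*$ is an isomorphism from $B(-\infty)$ to the crystal obtained from it by swapping starred and unstarred operators, this gives $B \simeq B(-\infty)$ as well.

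For the equivalence between ``$\Phi_i$ commutes with $f_j^*$'' and conditions (i)--(iii), I would expand the relation $\Phi_i(f_j^* b) = f_j^* \Phi_i(b)$, using on $B \otimes B^{(i)}$ the abuse $f_j^* = (* \otimes \Id) \circ f_j \circ (* \otimes \Id)$. For $j \neq i$: since $\varphi_j = \varepsilon_j = -\infty$ on $B^{(i)}$, the tensor product rule forces $f_j$ to act on the first factor, and after unravelling the $*$'s one obtains precisely the commutation of $f_j$ with $f_i^*$, i.e., condition (i). For $j = i$: the tensor product rule branches on whether $\varepsilon_i^*(f_i^{\varphi_i(b)}(b))$ is strictly less than, or at least, $\varphi_i(b)$. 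The strict case forces $f_i$ to act on $b^{(i)}(\varphi_i(b))$, and equating both sides yields $f_i(b) = f_i^*(b)$, condition (ii). The non-strict case allows $f_i$ to act on the first tensor factor and yields a commutation between $f_i^*$ and $f_i^{\varphi_i(b)}$ at $b$. The full strength of condition (iii)---commutation for all $k \geq 0$---is then obtained by applying this consequence to each $b' = f_i^j(b)$ with $0 \leq j \leq \varphi_i(b)$, noting that each such $b'$ inherits the non-strict inequality hypothesis, and using the injectivity of $f_i$ on its domain to cancel common powers.

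The main obstacle is the $j = i$ case analysis, and in particular the delicate bootstrapping required to deduce the for-all-$k$ form of condition (iii) from the single instance produced by the commutation of $\Phi_i$ at a given $b$. Edge cases such as $\varphi_i(b) = 0$ or $\varepsilon_i^*$ equalling $-\infty$ must also be verified, but they reduce to tautologies once the main case is understood. The other subtlety is making precise the sense in which the conclusion of Theorem \ref{th:comb-characterizaton} transfers from $B'$ back to $B$: this relies on the fact that in $B(-\infty)$, Kashiwara's involution $*$ genuinely intertwines the starred and unstarred data (preserving $\wt$), so that the swap-back on sets respects all structure.
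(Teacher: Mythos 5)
Your proposal is correct and follows essentially the same route as the paper: the paper's proof consists precisely of the two observations that the first assertion is Theorem \ref{th:comb-characterizaton} twisted by $*$, and that conditions (i)--(iii) are the explicit unpacking of the commutation of $\Phi_i$ with the starred operators via the tensor product rule. Your write-up simply supplies the details (the auxiliary crystal $B'$, the case analysis on $j=i$ versus $j\neq i$, and the branch on $\eps_i^*\bigl((f_i)^{\varphi_i(b)}(b)\bigr)$ versus $\varphi_i(b)$) that the paper leaves to the reader.
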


\begin{proof}
The first part follows from Theorem \ref{th:comb-characterizaton} simply by twisting by $*$. The second statement just gives the conditions one needs to explicitly check to see that the maps $\Phi_i$ all commute with the $*$ crystal operators.
\end{proof}

\begin{Theorem} \cite[Proposition 8.2]{Kashiwara:1995} \label{th:sub-crystal} Fix a dominant integral weight $\lambda = \sum_{i \in I} a_i \omega_i$, where the $\omega_i$ are the fundamental weights. 
Let $B^\lambda$ be the subgraph of $B(-\infty)$ consisting of those vertices $b$ such that $\varphi_i^*(b) \leq a_i$. For each $i \in I$, $f_i(B^\lambda) \subset B^\lambda \sqcup \{ \emptyset \}$, so one can consider the restriction $f^\lambda_i$ of $f_i$ to $B^\lambda$. Let $e^\lambda_i: B^\lambda \rightarrow B^\lambda \sqcup \{ \emptyset \}$ be the operator obtained from $e_i$ by setting $e^\lambda_i(b)= \emptyset$ if $e_i(b) \not \in B^\lambda$. Let $\wt^\lambda = \wt- \lambda$, $\eps_i^\lambda=\eps _i+ a_i$, and $\varphi^\lambda_i= \varphi_i$. Then $(B^\lambda, e_i^\lambda, f_i^\lambda, \eps^\lambda_i, \varphi^\lambda_i, \wt^\lambda)$ is isomorphic to $B(-\lambda)$.
\qed
\end{Theorem}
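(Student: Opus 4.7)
The plan is to realize $B^\lambda$ as the image of a Kashiwara-style tensor product embedding of $B(-\lambda)$ into $B(-\infty) \otimes T_{-\lambda}$, where $T_{-\lambda}$ denotes the one-element crystal $\{t\}$ with $\wt(t) = -\lambda$, $e_i t = f_i t = \emptyset$, and $\varepsilon_i(t) = \varphi_i(t) = -\infty$. A direct application of the tensor product rules of Definition~2.3 gives, for every $b \in B(-\infty)$,
\[
\wt(b \otimes t) = \wt(b) - \lambda, \qquad \varphi_i(b \otimes t) = \varphi_i(b), \qquad \varepsilon_i(b \otimes t) = \varepsilon_i(b) + a_i,
\]
together with $f_i(b \otimes t) = f_i(b) \otimes t$ and $e_i(b \otimes t) = e_i(b) \otimes t$ unconditionally (the $-\infty$ entries for $t$ make the branching in the tensor-product definition always land on the $b$ factor). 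These formulas match exactly the data $\wt^\lambda, \varphi_i^\lambda, \varepsilon_i^\lambda, f_i^\lambda, e_i^\lambda$ proposed in the theorem after identifying $b \leftrightarrow b \otimes t$, with the single caveat that $e_i^\lambda$ must be redefined to be $\emptyset$ whenever $e_i(b)$ leaves $B^\lambda$.

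The crux is then to identify the image of $B(-\lambda) \hookrightarrow B(-\infty) \otimes T_{-\lambda}$ (sending $v_{-\lambda} \mapsto v_- \otimes t$) with $B^\lambda \otimes t$. I would first verify that $B^\lambda$ is closed under each $f_i$: for $j \neq i$, the function $\varphi_j^*$ is constant along $i$-strings by the commutativity $f_i f_j^* = f_j^* f_i$ of Corollary~\ref{cor:comb-characterizaton2}(i); for $j = i$, a short case analysis using parts (ii) and (iii) of that corollary shows $\varphi_i^*(f_i b) \in \{\varphi_i^*(b), \varphi_i^*(b) - 1\}$, so the bound $\varphi_i^*(b) \leq a_i$ is preserved. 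Next, every element of $B^\lambda$ is reachable from $v_-$ by successive applications of $f_i$'s, since $B(-\infty)$ itself has this property and, by the closure just established, such a path remains inside $B^\lambda$. That the $f$-connected subcrystal one thereby obtains coincides with the image of $B(-\lambda)$ is Kashiwara's key observation: the starred inequality $\varphi_i^*(b) \leq a_i$ is the crystal-theoretic incarnation of the algebraic relation $E_i^{a_i+1} v_{-\lambda} = 0$ that defines $V(-\lambda)$ as a quotient of the Verma module, with the $*$-involution providing the translation.

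With the image identification in hand, the isomorphism $B^\lambda \simeq B(-\lambda)$ with the stated modified data is immediate from the tensor-product formulas computed above. The main obstacle is the image identification itself. A self-contained combinatorial proof could be assembled by induction on $f_i$-distance from $v_-$, carefully tracking how $\varphi_i^*$ evolves along $i$-strings via Corollary~\ref{cor:comb-characterizaton2} and checking that no element with some $\varphi_i^*(b) > a_i$ can be reached; but it seems cleanest to simply invoke Kashiwara's algebraic argument in \cite[Section~8.3]{Kashiwara:1995}, where the embedding and image identification are established directly from the quotient construction of $V(-\lambda)$.
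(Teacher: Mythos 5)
The paper gives no proof of this theorem: it is quoted directly from Kashiwara (\cite[Proposition 8.2]{Kashiwara:1995}) and marked with a \qed, so the ``paper's approach'' is simply citation. Your proposal is a correct outline of the standard proof of that result, translated to the lowest-weight conventions used here --- the $B(-\infty)\otimes T_{-\lambda}$ computation, the closure of $B^\lambda$ under the $f_i$ via the $*$-commutation relations, and the deferral of the image identification to Kashiwara's algebraic argument are all sound --- and since the crux is ultimately outsourced to the same source the paper cites, this is in substance the same approach.
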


Finally, we will need the following result of Kashiwara when we discuss crystals of type $ A_2^{(2)} $.  Our statement is about $B(-\infty)$ as opposed to $B(-\lambda)$, but it follows immediately from Kashiwara's result by taking a direct limit.

\begin{Theorem} (see \cite[Theorem 5.1]{Kashiwara:1996}) \label{th:similarity}
Fix symmetrized Cartan matrices $N$ and $N'$, both indexed by $I$. Assume $M = \text{diag} \{ m_i \}_{i \in I}$ is a diagonal matrix such that $N' = M N M$. Then there is a unique embedding $S: B^{N'}(-\infty) \rightarrow B^N(-\infty)$ such that
\begin{enumerate}
\item $S(b^{N'}_-) = b^N_-$, where $b^{N}_-$ and $b^{N'}_-$ are the lowest weight elements of $B^N(-\infty)$ and $B^{N'}(-\infty)$ respectively. 

\item For all $b \in B^{N'}(-\infty)$ and each $i \in I$, $S(e_i(b))= e_i^{m_i}(S(b))$. \qed
\end{enumerate}
\end{Theorem}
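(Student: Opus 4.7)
The plan is to deduce the statement from Kashiwara's original Theorem 5.1 in \cite{Kashiwara:1996}, which provides for each dominant integral weight $\lambda$ (in the root datum with symmetrized Cartan matrix $N'$) a unique similarity embedding
\[ S_\lambda \colon B^{N'}(-\lambda) \longrightarrow B^N(-M\lambda) \]
sending $b^{N'}_{-\lambda}$ to $b^N_{-M\lambda}$ and satisfying $S_\lambda(e_i b) = e_i^{m_i} S_\lambda(b)$ whenever $e_i b \neq \emptyset$. The idea is to assemble these into a single map by taking the direct limit over $\lambda$.

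First, I would use Theorem \ref{th:sub-crystal} to identify each $B^{N'}(-\lambda)$ with the subset $B^{N',\lambda} \subset B^{N'}(-\infty)$ of vertices $b$ with $\varphi_i^*(b) \le \langle \lambda, \alpha_i^\vee \rangle$ for all $i$, and likewise $B^{N}(-M\lambda)$ with $B^{N,M\lambda} \subset B^N(-\infty)$, noting that $B^{N'}(-\infty) = \bigcup_\lambda B^{N',\lambda}$. Next, the crux of the argument is the compatibility of the $S_\lambda$ under these inclusions: for dominant weights $\lambda \le \mu$, the restriction of $S_\mu$ to $B^{N',\lambda}$ coincides with $S_\lambda$ (followed by $B^{N,M\lambda} \hookrightarrow B^{N,M\mu}$). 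This follows from the uniqueness assertion inside Kashiwara's theorem applied to $B^{N'}(-\lambda)$: both candidate maps $B^{N'}(-\lambda) \to B^N(-M\mu)$ send the lowest weight element to the lowest weight element and satisfy $T(e_i b) = e_i^{m_i} T(b)$, so they must agree. The compatible family $\{S_\lambda\}$ then assembles into a single injective set map $S \colon B^{N'}(-\infty) \to B^N(-\infty)$.

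Finally, I would verify the two listed properties. Condition (i) is built into the construction, since $b^{N'}_-$ lies in $B^{N',\lambda}$ for every $\lambda$ and is sent by each $S_\lambda$ to $b^N_-$. For condition (ii), given $b \in B^{N'}(-\infty)$, pick $\lambda$ large enough that both $b$ and $e_i b$ lie in $B^{N',\lambda}$ (possible because $e_i$ is everywhere defined on $B^{N'}(-\infty)$ and the $B^{N',\lambda}$ exhaust it), and invoke the corresponding identity for $S_\lambda$. Uniqueness of $S$ is then automatic: $B^{N'}(-\infty)$ is generated from $b^{N'}_-$ by the operators $e_i$, so conditions (i) and (ii) determine $S$ on every element. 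The only nontrivial step is the compatibility check in the second paragraph, where one must carefully invoke Kashiwara's uniqueness clause for each finite $\lambda$; once this is in hand, passage to the limit is formal.
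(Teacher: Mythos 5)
Your proposal is correct and takes essentially the same approach as the paper, which gives no proof beyond the remark that the statement ``follows immediately from Kashiwara's result by taking a direct limit''; your write-up is a careful implementation of exactly that direct limit over the subcrystals $B^{N',\lambda}\subset B^{N'}(-\infty)$. The one cosmetic point is that the compatibility of the $S_\lambda$ is more cleanly justified by the elementary fact that any two maps out of a lowest weight crystal satisfying (i) and (ii) must coincide (since every element is reached from $b_-$ by $e_i$'s) than by citing Kashiwara's uniqueness clause for a codomain it was not stated for --- but you make this very observation yourself in your final uniqueness paragraph.
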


\subsection{The $\asl_2$ root system}
We refer the reader to e.g. \cite{Kac:1990} for details. The $\asl_2$ root system $\Delta$  is the affine root system corresponding to the affine Dynkin diagram 
$$
\begin{aligned}
\bullet & \Longleftrightarrow \bullet \\
0 & \quad \quad \;\; 1 \;\; .
\end{aligned}
$$
The Cartan matrix for the corresponding Kac-Moody algebra is
$$N = 
\left(
\begin{array}{rr}
2 & -2 \\
-2 & 2
\end{array}
\right).
$$
We denote the simple roots by $\alpha_0, \alpha_1$. Define $\delta = \alpha_0+ \alpha_1$. 

Recall that the $\asl_2$ weight space is a three dimensional vector space containing $\alpha_0, \alpha_1$. This has a standard non-degenerate inner product $(\cdot, \cdot)$ such that 
$$(\alpha_i, \alpha_j) =
\begin{cases}
\;\;2 \quad \text{ if $i=j$} \\
-2 \quad \text{ if } i \neq j.
\end{cases}
$$
Notice that $(\alpha_0, \delta)= (\alpha_1, \delta)=0$. Fix  fundamental coweights $\omega_0, \omega_1$ which satisfy $( \alpha_i, \omega_j) = \delta_{i,j}$. Since this case is symmetric, these can also be taken to be the fundamental weights under the identification of weight space with coweight space. 

The set of positive roots is
\begin{equation} \label{eq:roots}
\{\alpha_0, \alpha_0+\delta, \alpha_0+2\delta, \ldots \} \sqcup \{\alpha_1, \alpha_1+\delta, \alpha_1+2\delta, \ldots \} \sqcup \{\delta, 2\delta, 3\delta \ldots \},
\end{equation}
where the first two families consist of real roots and the third family consists of imaginary roots. All imaginary roots have multiplicity 1.  We draw these roots in the plane as
\begin{center}
\begin{tikzpicture}[scale=0.3]

\draw[line width = 0.05cm, ->] (0,0)--(3,1);
\draw[line width = 0.05cm, ->] (0,0)--(-3,1);

\draw[line width = 0.05cm, ->] (0,0)--(3,3);
\draw[line width = 0.05cm, ->] (0,0)--(-3,3);

\draw[line width = 0.05cm, ->] (0,0)--(3,5);
\draw[line width = 0.05cm, ->] (0,0)--(-3,5);

\draw[line width = 0.05cm, ->] (0,0)--(3,7);
\draw[line width = 0.05cm, ->] (0,0)--(-3, 7);

\draw[line width = 0.05cm, ->] (0,0)--(0,2);
\draw[line width = 0.05cm, ->] (0,0)--(0,4);
\draw[line width = 0.05cm, ->] (0,0)--(0,6);

\draw node at (0,8) {.};
\draw node at (0,7.6) {.};
\draw node at (0,7.2) {.};

\draw node at (-3,8.8) {.};
\draw node at (-3,8.4) {.};
\draw node at (-3,8) {.};

\draw node at (3,8.8) {.};
\draw node at (3,8.4) {.};
\draw node at (3,8) {.};

\draw node at (-4.9,8.8) {.};
\draw node at (-4.9,8.4) {.};
\draw node at (-4.9,8) {.};

\draw node at (4.9,8.8) {.};
\draw node at (4.9,8.4) {.};
\draw node at (4.9,8) {.};

\draw node at (-4,1) {{\small $ \alpha_0$}};
\draw node at (-4.8,3) {{\small $\alpha_0+\delta$}};
\draw node at (-5.1,5) {{\small $\alpha_0+2\delta$}};
\draw node at (-5.1,7) {{\small $\alpha_0+3\delta$}};

\draw node at (7,0) {{.}};
\draw node at (4,1) {{\small $\alpha_1$}};
\draw node at (4.8,3) {{\small $\alpha_1+\delta$}};
\draw node at (5.1,5) {{\small $\alpha_1+2\delta$}};
\draw node at (5.1,7) {{\small $\alpha_1+3\delta$}};

\draw node at (0,9) {{\small $k \delta$}};

\end{tikzpicture}
\end{center}

\section{$\asl_2$ MV polytopes} \label{sec:GGMS}

\begin{Definition}
An $\asl_2$ GGMS polytope is a convex polytope in $\text{span}_\br \{ \alpha_0, \alpha_1 \}$ such that all edges are parallel  to roots (see \eqref{eq:roots}). Such a polytope is called integral if all vertices lie in $\text{span}_\bz \{ \alpha_0, \alpha_1 \}$. 
\end{Definition}

We can encode a GGMS polytope by recording the position of each vertex. The vertices are labeled $\mu_k, \mu^k, \overline \mu_k, \omu^k, \mu_\infty, \mu^\infty, \omu_\infty, \omu^\infty$ as in Figure \ref{MVpoly}. For each $k \geq 1$, define $a_k, a^k, \oa_k, \oa^k$ by
\begin{equation}
\begin{aligned}
\mu_k-\mu_{k-1} = a_k (\alpha_1+ (k-1) \delta), \quad \mu^{k-1}-\mu^{k} = a^k (\alpha_0+ (k-1) \delta) \\
\omu_k-\omu_{k-1} = \oa_k (\alpha_0+ (k-1) \delta), \quad \omu^{k-1}-\omu^{k} = \oa^k (\alpha_1+ (k-1) \delta). 
\end{aligned}
\end{equation}
A polytope only has finitely many vertices, so the vertices $\mu_k$ must all coincide for sufficiently large $N$, as must the vertices $\mu^k,$ $\omu_k$, $\omu^k$. We denote 
\begin{equation}
\mu_\infty = \lim_{k \rightarrow \infty} \mu_k, \; \;\mu^\infty = \lim_{k \rightarrow \infty} \mu^k, \;\;
\omu_\infty = \lim_{k \rightarrow \infty} \omu_k, \;\;  \omu^\infty = \lim_{k \rightarrow \infty} \omu^k.
\end{equation}

\begin{Definition}
A decorated GGMS polytope is a GGMS polytope along with a choice of two sequences 
$\lambda = (\lambda_1 \geq  \lambda_2 \geq \cdots)$ and $\olambda = (\olambda_1 \geq \olambda_2 \geq \cdots)$ of non-negative real numbers such that $\mu^\infty - \mu_\infty = |\lambda| \delta$, $\omu^\infty - \omu_\infty = |\olambda| \delta$, and for all sufficiently large $N$, $\lambda_N=\olambda_N=0$. Here $|\lambda|= \lambda_1 + \lambda_2+ \cdots$ and $|\olambda|= \olambda_1 + \olambda_2 + \cdots$. 
A decorated GGMS polytopes is called integral if the underlying GGMS polytope is integral and all $\lambda_k, \olambda_k$ are integers.
\end{Definition}

\begin{Definition}
The right Lusztig data of a decorated GGMS polytope is the data ${\bf a}= ( a_k, \lambda_k, a^k )_{k \in \bn}$. The left Lusztig data is
 ${\bf \oa}=( \oa_k, \olambda_k, \oa^k )_{k \in \bn}$.
\end{Definition}

\begin{Definition} \label{def:characterization} An $\asl_2$ MV polytope $P$ is a decorated $\asl_2$ GGMS polytope such that
\begin{enumerate}
\item \label{top-part} For each $k \geq 2$, $( \omu_{k}-\mu_{k-1}, \omega_1) \leq 0$ and $(\mu_{k} -\omu_{k-1}, \omega_0) \leq 0$, with at least one of these being an equality.

\item \label{bottom-part}  For each $k \geq 2$, $(\omu^{k} -\mu^{k-1}, \omega_0) \geq 0$ and $(\mu^{k} -\omu^{k-1}, \omega_1) \geq 0$, with at least one of these being an equality.

\item \label{part:middle1}If $(\mu_\infty, \omu_\infty)$ and $(\mu^\infty, \omu^\infty)$ are parallel then $\lambda=\olambda$. Otherwise, one is obtained from the other by removing a part of size $ (\mu_\infty - \omu_\infty, \alpha_1)/2$ (i.e. the width of the polytope). 

\item \label{part:middle2} $ \lambda_1, \olambda_1  \leq (\mu_\infty - \omu_\infty, \alpha_1)/2.$
\end{enumerate}
We denote by $\MV$ the set of integral $\asl_2$ MV polytopes up to translation (i.e. $ P_1 = P_2 $ in $\MV $ if $ P_1 = P_2 + \mu $ for some weight $\mu$). 
\end{Definition}

\begin{figure}[ht]

\begin{center}
\begin{tikzpicture}[yscale=0.2, xscale=0.6]

\draw [line width = 0.01cm, color=gray] (-1.5,2.5) -- (3.5, 7.5);
\draw [line width = 0.01cm, color=gray] (-2,4) -- (4, 10);
\draw [line width = 0.01cm, color=gray] (-2.5,5.5) -- (4, 12);
\draw [line width = 0.01cm, color=gray] (-3.3333,8.6666) -- (4, 16);
\draw [line width = 0.01cm, color=gray] (-3.6666,10.3333) -- (4, 18);
\draw [line width = 0.01cm, color=gray] (-4.25,13.75) -- (4, 22);
\draw [line width = 0.01cm, color=gray] (-4.5,15.55) -- (4, 24);
\draw [line width = 0.01cm, color=gray] (-4.75,17.25) -- (4, 26);
\draw [line width = 0.01cm, color=gray] (-5,19) -- (4, 28);
\draw [line width = 0.01cm, color=gray] (-5,21) -- (4, 30);
\draw [line width = 0.01cm, color=gray] (-5,23) -- (4, 32);
\draw [line width = 0.01cm, color=gray] (-5,25) -- (4, 34);
\draw [line width = 0.01cm, color=gray] (-4.6666,29.3333) -- (3.6666, 37.6666);
\draw [line width = 0.01cm, color=gray] (-4.3333,31.6666) -- (3.33333, 39.3333);

\draw [line width = 0.01cm, color=gray] (1,1) -- (-2,4);
\draw [line width = 0.01cm, color=gray]  (2,2)-- (-3,7);
\draw [line width = 0.01cm, color=gray] (2.5,3.5) -- (-3.5,9.5);
\draw [line width = 0.01cm, color=gray] (3.3333,6.6666) -- (-4.3333,14.3333);
\draw [line width = 0.01cm, color=gray] (3.6666,8.3333) -- (-4.6666,16.6666);
\draw [line width = 0.01cm, color=gray] (4,12) -- (-5,21);
\draw [line width = 0.01cm, color=gray] (4,14) -- (-5,23);
\draw [line width = 0.01cm, color=gray] (4,16) -- (-5,25);
\draw [line width = 0.01cm, color=gray] (4,18) -- (-5,27);

\draw [line width = 0.01cm, color=gray] (4,20) -- (-4.75,28.75);
\draw [line width = 0.01cm, color=gray] (4,22) -- (-4.5,30.5);
\draw [line width = 0.01cm, color=gray] (4,24) -- (-4.25,32.25);
\draw [line width = 0.01cm, color=gray] (4,28) -- (-3.5,35.5);

\draw [line width = 0.01cm, color=gray] (4,32) -- (-2,38);
\draw [line width = 0.01cm, color=gray] (4,34) -- (-1,39);
\draw [line width = 0.01cm, color=gray] (4,36) -- (0,40);
\draw [line width = 0.01cm, color=gray] (3.5,38.5) -- (1,41);

\draw [line width = 0.01cm, color=gray] (-3,7) -- (4, 14);
\draw [line width = 0.01cm, color=gray] (-4,12) -- (4, 20);

\draw [line width = 0.01cm, color=gray] (-3,37) -- (4, 30);
\draw [line width = 0.01cm, color=gray] (-4,34) -- (4, 26);
\draw [line width = 0.01cm, color=gray] (-5,27) -- (4, 18);

\draw (0,0) node {$\bullet$};

\draw (-1,1) node {$\bullet$};
\draw (-3,7) node {$\bullet$};
\draw (-4,12) node  {$\bullet$};
\draw (-5,19) node  {$\bullet$};
\draw (-5,23) node {$\bullet$};
\draw (-5,25) node {$\bullet$};
\draw  (-5,27) node {$\bullet$};
\draw (-4,34) node {$\bullet$};
\draw (-3,37) node {$\bullet$};
\draw  (2,42)  node {$\bullet$};

\draw (2,2)  node {$\bullet$};
\draw (3,5) node {$\bullet$};
\draw (4,10) node {$\bullet$};
\draw (4,28) node {$\bullet$};

\draw (4,28) node {$\bullet$};
\draw (4,32) node {$\bullet$};
\draw (4,34) node {$\bullet$};
\draw (4,36) node {$\bullet$};
\draw (3,41) node {$\bullet$};

\draw [line width = 0.04cm] (0,0)--(-1,1);
\draw  [line width = 0.04cm] (-1,1)--(-3,7);
\draw [line width = 0.04cm] (-3,7)--(-4,12);
\draw [line width = 0.04cm] (-4,12)--(-5,19);
\draw [line width = 0.04cm] (-5,19)--(-5,27);
\draw [line width = 0.04cm] (-5,27)--(-4,34);
\draw [line width = 0.04cm] (-4,34)--(-3,37);
\draw [line width = 0.04cm](-3,37)--(2,42);

\draw [line width = 0.04cm](0,0)--(2,2);
\draw [line width = 0.04cm] (2,2)--(3,5);
\draw [line width = 0.04cm] (3,5)--(4,10);
\draw [line width = 0.04cm] (4,10)--(4,36);
\draw [line width = 0.04cm](4,36)--(3,41);
\draw [line width = 0.04cm] (3,41)--(2,42);

\draw [line width = 0.05cm] (-1,1) -- (3,5);
\draw [line width = 0.05cm] (3,5) -- (-4,12);
\draw [line width = 0.05cm] (4,10) -- (-5,19);
\draw [line width = 0.05cm] (-5,27) -- (4,36);
\draw [line width = 0.05cm] (-4,34) -- (3,41);

\draw (1,1) node {\tiny $\bullet$};

\draw (4,12) node {\tiny $\bullet$};
\draw (4,14) node {\tiny $\bullet$};
\draw (4,16) node {\tiny $\bullet$};
\draw (4,18) node {\tiny $\bullet$};
\draw (4,20) node {\tiny $\bullet$};
\draw (4,22) node {\tiny $\bullet$};
\draw (4,24) node {\tiny $\bullet$};
\draw (4,26) node {\tiny $\bullet$};
\draw (4,30) node {\tiny $\bullet$};

\draw (-2,4) node {\tiny $\bullet$};
\draw (-5,21) node {\tiny $\bullet$};

\draw (-2,38) node {\tiny $\bullet$};
\draw (-1,39) node {\tiny $\bullet$};
\draw (0,40) node {\tiny $\bullet$};
\draw (1,41) node {\tiny $\bullet$};

\draw (1.2,-0.35) node {\tiny $\alpha_1$};
\draw (3,2.7) node {\tiny $\alpha_1+\delta$};
\draw (4.4,7) node {\tiny $\alpha_1+2\delta$};
\draw (4.6,23) node {\tiny $\delta$};
\draw (4.4,39) node {\tiny $\alpha_0+2\delta$};
\draw (2.8,42.5) node {\tiny $\alpha_0$};

\draw (-0.5,41) node {\tiny $\alpha_1$};
\draw (-4.15,36.3) node {\tiny $\alpha_1+\delta$};
\draw (-5.5,31) node {\tiny $\alpha_1+3\delta$};
\draw (-5.5,23) node {\tiny $\delta$};
\draw (-5.5,15) node {\tiny $\alpha_0+3\delta$};
\draw (-4.5,9) node {\tiny $\alpha_0+2\delta$};
\draw (-3,3.5) node {\tiny $\alpha_0+\delta$};
\draw (-0.7,-0.7) node {\tiny $\alpha_0$};

\draw(0,-1) node {$\mu_0 $};
\draw(2.3,0.3) node {$\mu_1$};

\draw(3.5,4.5) node {$\mu_2$};
\draw(7,10) node {$\mu_3= \mu_4 = \cdots = \mu_\infty$};
\draw(7,36) node {$\mu^3= \mu^4 = \cdots = \mu^\infty$};
\draw(4.3,41.7) node {$\mu^1= \mu^2$};

\draw(-1.4,0) node {$\overline \mu_1$};
\draw(-3.5,6) node {$\overline \mu_2$};
\draw(-4.5,12) node {$\overline \mu_3$};
\draw(-8,19) node {$\overline \mu_\infty = \cdots = \overline \mu_5= \overline \mu_4$};
\draw(-8,27) node {$\overline \mu^\infty = \cdots = \overline \mu^5= \overline \mu^4$};
\draw(-5.3,34) node {$\overline \mu^3= \overline \mu^2$};
\draw(-3.2,38.2) node {$\overline \mu^1$};
\draw(-5.3,34) node {$\overline \mu^3= \overline \mu^2$};
\draw(2.1,43.7) node {$\mu^0$};

\draw[line width = 0.03cm, ->] (0,-4)--(1,-3);
\draw[line width = 0.03cm, ->] (0,-4)--(-1,-3);
\draw (1.5,-2.6) node {$\alpha_1$};
\draw (-1.5,-2.6) node {$\alpha_0$};

\end{tikzpicture}

\end{center}

\caption{\label{MVpoly}An integral $\asl_2$ MV polytope. The partitions labeling the vertical edges are indicated by including extra vertices on the vertical edges, such that the edge is cut into the pieces indicated by the partition. 
Here 
$$ \hspace{-3cm} a_1=2, \; a_2=1, \; a_3 =1,\; \lambda_1 = 9, \; \lambda_2=2,\; \lambda_3=1,\; \lambda_4=1, \; a^3=1, \; a^1=1, $$
$$\hspace{-2.5cm} \oa_1 =1,\;  \oa_2 = 2, \; \oa_3=1, \; \oa_4 = 1, \; \olambda_1=2, \; \olambda_2=1, \; \olambda_3=1, \; \oa^4=1, \; \oa^2=1, \; \oa^1=5,$$
and all other $a_k, a^k, \oa_k, \oa^k, \lambda_k, \olambda_k$ are $0$. 
The bold diagonals form a complete system $S$ of active diagonals (note that $(\omu_2, \mu_1)$ is actually also active, so there are two choices of such a complete system).
By remark \ref{rem:cut-into-quads} all the quadrilaterals obtained by cutting the polytope along the diagonals in $S$ are themselves MV polytopes.
}
\end{figure}

\begin{Remark} \label{rem:rotate} 
It is immediate from Definition \ref{def:characterization} that the set $\MV$ is preserved under vertical reflection (i.e the linear map defined by $ \alpha_1 \mapsto -\alpha_0 $ and $ \alpha_0 \mapsto -\alpha_1 $), horizontal reflection (i.e the linear map $\alpha_0 \leftrightarrow \alpha_1$ ) and negation (which is just the composition of the first two maps). 
 \end{Remark}

\begin{Definition} \label{def:diagonals}
 The {\it active diagonals} of an MV polytope are the diagonals $(\omu_{k}, \mu_{k-1})$, $(\mu_{k}, \omu_{k-1})$, $(\omu^{k},\mu^{k-1})$ or $(\mu^{k}, \omu^{k-1})$  where the inequalities from Definition \ref{def:characterization} hold with equality. 
 An $\alpha_0$-active diagonal is an active diagonal parallel to $\alpha_0$, and an $\alpha_1$-active diagonal is one parallel to $\alpha_1$. 
\end{Definition}

\begin{Remark} \label{rem:cut-into-quads}
Let $P$ be an MV polytope, and cut P along any active diagonal. It is immediate from Definition \ref{def:characterization} that both halves are themselves MV polytopes. 
\end{Remark}

\begin{Definition}
 A {\it complete system of diagonals} $S$ is a choice of one of
 $\{(\omu_{k}, \mu_{k-1}),(\mu_{k}, \omu_{k-1}) \}$ and one of $\{ (\omu^{k},\mu^{k-1}),(\mu^{k}, \omu^{k-1}) \}$ for each $k \geq 2$, which stabilizes in the sense that either $S$ contains $(\omu_{k}, \mu_{k-1})$ for all sufficiently large $k$, or it contains $(\mu_{k}, \omu_{k-1}) $ for all sufficiently large $k$, and similarly for the upper diagonals.
\end{Definition}

\begin{Proposition}  \label{prop:types}
For any complete system $S$ of diagonals, there is some $P \in \MV$ where all diagonals in $S$ are active, but no other diagonals are active. 
\end{Proposition}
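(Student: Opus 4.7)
The plan is to construct $P$ explicitly by recursion on the level, working rationally first and then rescaling to obtain an integral polytope.  Build the lower half first; the upper half is obtained by the vertical reflection of Remark \ref{rem:rotate}; the two halves are joined via middle partition data.

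For the lower half, proceed by induction on $k\geq 2$.  Choose large positive rationals $a_1,\oa_1$ and suppose $a_1,\oa_1,\ldots,a_{k-1},\oa_{k-1}$ have been fixed so that the lower diagonals of $S$ at levels $2,\ldots,k-1$ are exactly the active ones.  The active-diagonal condition at level $k$ is a single linear equation in $(a_k,\oa_k)$: e.g.\ if $(\omu_k,\mu_{k-1})\in S$, then $(\omu_k-\mu_{k-1},\omega_1)=0$ reads
\[
\oa_k(k-1) \;=\; (\mu_{k-1},\omega_1)-(\omu_{k-1},\omega_1),
\]
uniquely determining $\oa_k$ from the prior data and leaving $a_k$ free.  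The requirement that the complementary diagonal \emph{not} be active is a strict inequality of the form $a_k(k-1) < C_k$ for some $C_k > 0$ computable from the data; an inductive estimate shows $C_k$ remains positive so long as $a_1,\oa_1$ were taken sufficiently large.  Any positive rational $a_k < C_k/(k-1)$ then works, and beyond the stabilization index of $S$ we set $a_k=\oa_k=0$, producing $\mu_\infty,\omu_\infty$.  The upper half is built analogously.

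For the middle, observe that $(\mu^\infty-\omu^\infty,\alpha_1)=(\mu_\infty-\omu_\infty,\alpha_1)$ because $\mu^\infty-\mu_\infty$ and $\omu^\infty-\omu_\infty$ are $\delta$-multiples and $(\delta,\alpha_1)=0$; hence the two widths automatically agree, call this value $w$.  If the segments $(\mu_\infty,\omu_\infty)$ and $(\mu^\infty,\omu^\infty)$ are parallel, set $\mu^\infty-\mu_\infty=\omu^\infty-\omu_\infty$ to be any common $\delta$-multiple and take $\lambda=\olambda$ to be any partition of that size with largest part $\leq w$; otherwise, a direct computation shows that the two $\delta$-multiples must differ by $\pm w$, and one pads the shorter of $\lambda,\olambda$ by a single part of size $w$.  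In either case conditions (iii)--(iv) of Definition \ref{def:characterization} are satisfied.  Finally multiply all data by a common positive integer denominator; since convexity, edge directions, and the equality/strict-inequality pattern of the diagonal conditions are invariant under positive rescaling, the resulting integral polytope lies in $\MV$ and has exactly $S$ as its system of active diagonals.

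The main obstacle is the bookkeeping in the induction: verifying $C_k>0$ at every step and ensuring enough flexibility remains to match the middle conditions.  Both are handled by the ``large seed'' trick --- taking $a_1,\oa_1,a^1,\oa^1$ sufficiently large relative to the downstream free choices --- which keeps each $C_k$ a positive linear combination of previously chosen data, bounded below by a positive quantity, and leaves enough room in the top and bottom constructions to align the middle as required.
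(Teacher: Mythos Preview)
Your approach differs substantively from the paper's. The paper inducts not level by level but on the number of times the diagonal type ($\alpha_0$ versus $\alpha_1$) in $S$ changes as one moves from bottom to top: the base case is a segment parallel to $\alpha_0$ or $\alpha_1$, and the inductive step appends a single quadrilateral or triangle at the bottom of an inductively-constructed polytope for a modified system $S'$ with one fewer change, arranged so that all the old bottom diagonals coincide and one new change is created below them. This structural induction sidesteps any per-level bookkeeping and, crucially, never needs to glue two independently-built halves in the middle.

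Your level-by-level construction is plausible but has a genuine gap in the middle-matching step. You assert that the widths of the upper and lower halves automatically agree ``because $\mu^\infty-\mu_\infty$ and $\omu^\infty-\omu_\infty$ are $\delta$-multiples''; this is circular. Those two differences being $\delta$-multiples is precisely the condition that the two halves fit together into a GGMS polytope, and it holds if and only if the widths already agree --- which is not guaranteed when the halves are constructed independently. The fix is to rescale one half so that the widths match (this preserves the active/inactive pattern) and then to verify that the stabilized limiting diagonal directions force $(\mu^\infty-\omu^\infty)-(\mu_\infty-\omu_\infty)\in\{0,w\delta,-w\delta\}$, whence condition~(\ref{part:middle1}) of Definition~\ref{def:characterization} can be satisfied; this does hold, but it is a nontrivial computation you have not carried out. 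The estimate ``$C_k>0$'' also needs more care, especially where the diagonal type switches: for instance, if the $\alpha_0$-diagonal is in $S$ at level $k$ and the $\alpha_1$-diagonal at level $k+1$, one finds that the bound for the free variable at level $k+1$ equals $a_k\cdot k$, so the free choice $a_k$ at level $k$ must have been strictly positive --- a constraint your ``large seed'' heuristic on $a_1,\oa_1$ does not address at intermediate levels. These details are all manageable, but the proof as written does not supply them.
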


\begin{proof}
Proceed by induction on the number of times the type of diagonal ($\alpha_0$ versus $\alpha_1$) in $S$ changes as you move from the bottom to the top. We will prove the statement along with the extra assumption that the polytope $P$ can be chosen such that all the active diagonals below the first time the type of diagonal changes coincide. 
If the number of changes is 0, one can take $P$ to be a line segment parallel to $\alpha_0$ or $\alpha_1$. 

Assume the number of changes is at least 1, and without loss of generality assume $(\mu_2, \omu_1) \in S$. Let $d$ be the lowest $\alpha_0$ diagonal in $S$. Let $S'$ be the complete system of diagonals obtained from $S$ by replacing all $\alpha_1$ diagonals in $S$ below $d$ with $\alpha_0$ diagonals. By induction, we can find the required polytope  $P'$ for $S'$ such that all active diagonals of $P'$ below $d$ coincide with~$d$.  

There are three cases, based on whether $d$ is of the form $(\omu_k, \mu_{k-1})$, $(\omu^\infty, \mu^\infty)$ or $(\mu^{k+1}, \omu^k)$. In each case, one can glue a quadrilateral or a triangle at the bottom of $P'$ to obtain the desired $P$, as shown in Figure \ref{fig:append-quad}. This can be done such that all the edge length are rational, and then we can rescale to get an element of $\MV$ satisfying the required conditions. 
\end{proof}

\begin{figure}[ht]

\begin{tikzpicture}[yscale=0.15, xscale=0.45]

\draw 
(-1,9.4) node {$\bullet$}
(-.9,7.2) node {$\bullet$}
(0,0) node {$\bullet$}
(4.2,4.2) node {$\bullet$}
(4.7, 9.7) node {$\bullet$};

\draw[line width = 0.03cm, color=gray]
(5.7, 10.7)--(-1.3,3.7);

\draw[line width = 0.05cm] 
(0,0)--(4.2,4.2)
(4.2,4.2)--(-1,9.4)
(0,0)--(-.9,7.2)--(-1,9.4)
(4.2, 4.2)--(4.7, 9.7)
;

\draw (0.8,9.3) node {$d$}
(6.4, 4.1) node {$\mu_{k-1}=\mu_1$}
(-2, 6.9) node {$\omu_{k-1}$}
(-1.8, 10.3) node {$\omu_k$}
(5.4, 8.8) node {$\mu_k$} 
(.2,-1.6) node {$\omu_{k-2}=\mu_0$}
;

\end{tikzpicture}
\hspace{0.5in}
\begin{tikzpicture}[yscale=0.15, xscale=0.45]

\draw 
(0,0) node {$\bullet$}
(5,5) node {$\bullet$}
(0,10) node {$\bullet$}
;

\draw[line width = 0.05cm] 
(0,0)--(5,5)
(5,5)--(0,10)
(0,0)--(0,10)
;

\draw (2.6,9) node {$d$}
(-1, 0) node {$\omu_{\infty}$}
(-1, 10) node {$\omu^{\infty}$}
(7, 5) node {$\mu_\infty=\mu^\infty$}
;

\end{tikzpicture}

\caption{\label{fig:append-quad}
The inductive step for the proof of Proposition \ref{prop:types}. In the left diagram, $d=(\omu_{k}, \mu_{k-1})$. Inductively we can find a polytope for $S'$, which we place above diagonal $d$. In the left diagram, since $(\mu_k,\omu_{k-1})$ was not active in the original polytope, $\omu_k$ is strictly above the line through $\mu_k$ parallel to $\alpha_1$. Thus we can append the shown quadrilateral, where $\omu_{k-1}$ is chosen to be sufficiently close to $\omu_k$, but not equal to $\omu_k$. In the second figure, we can append a triangle, where the partition associated to the vertical edge is a single part. The case when $d$ of of type $(\mu^{k+1}, \omu^k)$ is similar to the first case. 
}
\end{figure}
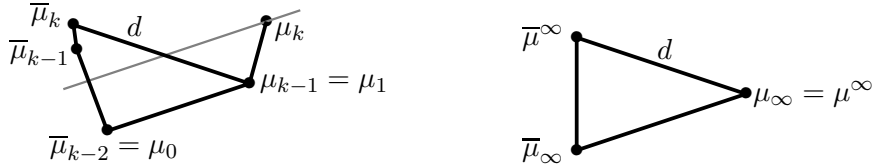

\begin{Definition}
A Lusztig datum is a choice of a tuple ${\bf a} = (a_k, \lambda_k, a^k)_{k \in \bn}$ of non-negative real numbers such that
\begin{enumerate}
\item  for all sufficiently large $k$, $a_k=a^k=\lambda_k=0$, and 
\item $\lambda_1 \geq \lambda_2 \geq \cdots$.
\end{enumerate}
The weight $\wt({\bf a})$ of a Lusztig datum ${\bf a}$ is 
$$\wt({\bf a}):= \sum_k a_k (\alpha_1+ (k-1) \delta) + |\lambda| \delta + \sum_k a^k (\alpha_0+ (k-1) \delta).$$
\end{Definition}

\begin{Theorem} \label{th:Lusztig-data}
For each Lusztig datum ${\bf a}$,
there is a unique $\asl_2$ MV polytope $P_{\bf a}$ whose right Lusztig data is given by ${\bf a}$. Furthermore $P_{\bf a}$ is integral if and only if all $a_k, \lambda_k$ and $a^k$ are integers. 
\end{Theorem}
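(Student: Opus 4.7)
The plan is to construct the left Lusztig data $(\oa_k, \olambda_k, \oa^k)_{k \in \bn}$ uniquely from the right Lusztig data ${\bf a}$ and verify that the result satisfies Definition~\ref{def:characterization}. The bottom-left vertices $\omu_k$ are built inductively. With $\mu_0 = \omu_0 = 0$ and $A_k := (\mu_{k-1} - \omu_{k-1}, \omega_1)$, $B_k := (\omu_{k-1} - \mu_{k-1}, \omega_0)$, a direct calculation shows that condition (i) at level $k$ rewrites as the pair of inequalities $A_k \geq (k-1)\oa_k$ and $B_k \geq (k-1)a_k$, with at least one equality. These quantities obey the forward recursion
\[
A_{k+1} = A_k + ka_k - (k-1)\oa_k, \qquad B_{k+1} = B_k + k\oa_k - (k-1)a_k.
\]
When $B_k > (k-1)a_k$ strictly, the equality forces $\oa_k = A_k/(k-1)$; when $B_k = (k-1)a_k$, the value of $\oa_k$ is left in a range constrained only by subsequent conditions.

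The top half $\omu^k$ is handled symmetrically using condition (ii), yielding a unique sequence $(\oa^k)$. Once $\omu_\infty$ and $\omu^\infty$ are in hand, the types (parallel to $\alpha_0$ or $\alpha_1$) of the two corner diagonals $(\mu_\infty, \omu_\infty)$ and $(\mu^\infty, \omu^\infty)$ are determined. Then $\olambda$ is fixed by conditions (iii) and (iv): if the two corner diagonals are parallel to each other, then $\olambda = \lambda$; otherwise $\olambda$ differs from $\lambda$ by a single part of size $w := (\mu_\infty - \omu_\infty, \alpha_1)/2$, with condition (iv) and the monotonicity of the partitions specifying whether that part must be added or removed.

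The main obstacle is resolving the ambiguous case $B_k = (k-1)a_k$, where the condition at step $k$ alone fails to pin down $\oa_k$. I would handle this by exploiting that ${\bf a}$ has only finitely many nonzero entries, so the polytope must close up on the left and hence $\oa_k = 0$ for all large $k$, forcing $A_k = B_k = 0$ beyond some point; propagating this stability back through the recursion then uniquely determines each $\oa_k$ in the ambiguous steps. Integrality is preserved throughout because the forcing equations, analyzed carefully in both the unambiguous and ambiguous cases, always yield integer values when the input data is integral.
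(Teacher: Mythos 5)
Your setup of the local constraints is correct: with $A_k=(\mu_{k-1}-\omu_{k-1},\omega_1)$ and $B_k=(\omu_{k-1}-\mu_{k-1},\omega_0)$, condition (i) of Definition \ref{def:characterization} at level $k\geq 2$ is indeed $A_k\geq(k-1)\oa_k$, $B_k\geq(k-1)a_k$ with at least one equality, and your recursion for $A_k,B_k$ is right. But the proposal has a genuine gap at its core: the forward recursion does not determine the left data. There is no condition at $k=1$, so $\oa_1$ is completely unconstrained by this scheme, and whenever $B_k=(k-1)a_k$ the value of $\oa_k$ is likewise undetermined locally. This is not a boundary nuisance but the whole difficulty of the theorem: the correct value of $\oa_1$ depends on the entire right Lusztig datum, including $\lambda$ and all the $a^k$ from the top half of the polytope (see the max-formula in Remark \ref{rem:G=phi}). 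A concrete failure: for ${\bf a}=(2,0,\dots,0,3)$ the unique MV polytope has $\oa_1=\oa_2=0$, $\oa_3=1$ (Lemma \ref{le:a_1a^1}), and no amount of forward recursion through conditions (i) alone can see this, since every step from $k=2$ onward is "ambiguous." Your proposed fix --- that closure forces $A_k=B_k=0$ for large $k$ and one can back-propagate --- is based on a false premise: for $k$ beyond the support one only needs $A_k=0$ \emph{or} $B_k=0$ (in the example above $A_\infty=0$ but $B_\infty=3$), and a single terminal equation cannot resolve the many free parameters accumulated in the ambiguous steps. You also treat the top and bottom halves as independent, whereas they are coupled by the requirement $\omu^\infty-\omu_\infty=|\olambda|\delta$ together with conditions (iii) and (iv); proving that this coupled system has exactly one solution is precisely what needs an argument, and existence is asserted rather than proved.

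For comparison, the paper's proof avoids the undetermined recursion entirely: it inducts on the height of ${\bf a}$, using the fact that at least one of the two diagonals at the lowest nontrivial level must be active to cut any candidate polytope into a small explicit piece (handled by Lemmas \ref{le:a_ka_k+1}--\ref{le:basecases}) and a piece of strictly smaller height; an explicit inequality comparing $\ob_1$ with $ka_1+a_k$ decides which diagonal is active, and the auxiliary Propositions \ref{pr:a1not0} and \ref{pr:a10} are carried along in the induction exactly to control the quantity $\oa_1$ that your approach leaves free. To rescue your strategy you would need to identify and prove the global formula determining $\oa_1$ (and the other ambiguous $\oa_k$) from ${\bf a}$, which is essentially equivalent to redoing the paper's induction.
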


\begin{Remark}
We often use the notation ${\bf \oa}$ to denote the left Lusztig data of the polytope $P_{\bf a}$ specified by Theorem \ref{th:Lusztig-data}.
\end{Remark}

\begin{Remark}
In finite type, the lengths of the edges along one side of an MV polytope (or, in rank greater than two, along certain paths from the bottom to the top vertex) is the data used by Lusztig to parameterize that element of $B(-\infty)$. Lusztig's construction uses a longest element of the Weyl group, so does not generalize immediately to $\asl_2$. However, \cite{Damiani:1993} (see also \cite{BCP:1999}) gives an analogue of Lusztig parameterization for $\asl_2$. It would be interesting to determine if our combinatorics can be related to this algebraic construction.
\end{Remark}

\subsection{Proof of Theorem \ref{th:Lusztig-data}}

Our proof is effective: we present
a recursive algorithm to construct the polytope $P_{\mathbf a}$ by reducing to cases $\mathbf a'$ where the height of $\mathbf a'$ is less than the height of $\mathbf a$ (here the height of a Lusztig datum ${\bf b}$ is defined to be $(\wt({\bf b}), \omega_0+ \omega_1)$).
We begin by considering some simple Lusztig data which will serve as building blocks.

\begin{Lemma} \label{le:a_ka_k+1}
Theorem~\ref{th:Lusztig-data} holds for a Lusztig data of the form
$(0,\ldots,0,a_k,a_{k+1},0,\ldots,0)$, and the corresponding left
Lusztig data is $(\oa_1,0,\ldots,0,\oa^1)$, with
$\oa_1=(k-1)a_k+ka_{k+1}$ and $\oa^1=ka_k+(k+1)a_{k+1}$.
\begin{center}
\begin{tikzpicture}[xscale=0.3, yscale=0.1]
\draw 
(0,0) node {$\bullet$}
(1,5) node {$\bullet$}
(2,12) node {$\bullet$}
(-5,5) node {$\bullet$};

\draw [line width = 0.05cm] 
(0,0)--(1,5)
(1,5)--(2,12)
(2,12)--(-5,5)
(0,0)--(-5,5);

\draw
(1.6,2) node {$a_k$}
(3.2,8.4) node {$a_{k+1}$}
(-2,10.7) node {$\oa^1$}
(-2.8,0.3) node {$\oa_1$}
(-6,5) node {$\omu_k$};
\end{tikzpicture}
\end{center}
\end{Lemma}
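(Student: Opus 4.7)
The plan is to exhibit $P_{\bf a}$ explicitly as a quadrilateral (with all other labeled vertices collapsing onto its four corners), verify Definition~\ref{def:characterization} by a short finite computation, and deduce uniqueness from those same conditions applied to the given right side. With only $a_k$ and $a_{k+1}$ nonzero and all other right-side data vanishing, the right-hand walk forces $\mu_0 = \cdots = \mu_{k-1} = 0$, $\mu_k = a_k(\alpha_1 + (k-1)\delta)$, and $\mu_j = \mu_{k+1} = \mu_k + a_{k+1}(\alpha_1 + k\delta)$ for $j \geq k+1$. Because $|\lambda|=0$ and all $a^j = 0$, we also have $\mu^\infty = \mu_\infty = \mu_{k+1}$ and $\mu^j = \mu^0 = \mu_{k+1}$. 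I would then propose the quadrilateral whose fourth corner is $\omu_1 = \oa_1 \alpha_0$, with $\omu_j = \omu^j = \omu_1$ for $j \geq 1$ and $\omu^0 = \mu^0$. Expanding $\delta = \alpha_0 + \alpha_1$ yields $\mu_{k+1} = ((k-1)a_k + k a_{k+1})\alpha_0 + (k a_k + (k+1) a_{k+1})\alpha_1$, so the closure identity $\omu^0 = \oa_1 \alpha_0 + \oa^1 \alpha_1$ produces exactly the claimed values of $\oa_1$ and $\oa^1$; integrality of the left datum is then immediate.

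Next I would check the conditions of Definition~\ref{def:characterization}. Conditions (iii) and (iv) are vacuous since $\lambda$ and $\olambda$ are empty. For (i) I split the index into the ranges $2 \leq j \leq k$, $j = k+1$, and $j \geq k+2$. In each range one of the two pairings $(\omu_j - \mu_{j-1}, \omega_1)$ and $(\mu_j - \omu_{j-1}, \omega_0)$ vanishes (the active diagonal cuts off a single corner of the quadrilateral, parallel to $\alpha_0$ in the outer ranges and to $\alpha_1$ in the middle range), while the other is manifestly nonpositive given $\oa_1, \oa^1, a_k, a_{k+1} \geq 0$. Condition (ii) follows from the same computation applied to the top of the polytope, or equivalently by invoking the vertical-reflection symmetry of Remark~\ref{rem:rotate}.

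For uniqueness, let $P'$ be any $\asl_2$ MV polytope whose right Lusztig datum is the given one. The right side of $P'$, together with $\omu_0 = 0$ and $\omu^0 = \mu^0$, are forced. Condition (iii) combined with $\mu^\infty = \mu_\infty$ forces $\olambda = \emptyset$, so $\omu^\infty = \omu_\infty$. Working upward using (i): for $2 \leq j \leq k$ the vanishing $\mu_{j-1} = 0$ together with the ``at least one equality'' clause (using $(\alpha_0, \omega_1) = 0$) forces $\omu_j$ to lie on the line $\br\alpha_0$ through the origin; at $j = k+1$ the same clause forces $\omu_k = \omu_\infty$. A dual application of (ii) from the top downward handles the $\omu^j$. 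Hence all $\oa_j, \oa^j$ for $j \geq 2$ vanish, and the two remaining parameters $\oa_1, \oa^1$ are determined by the closure identity above. The delicate point I expect to require the most care is exactly this uniqueness step, where the ``at least one equality'' clauses must be deployed systematically to rule out any extra edges on the left side, while keeping track of the interplay between (i), (ii), and the convexity built into the definition of a GGMS polytope.
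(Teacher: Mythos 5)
The paper does not actually prove this lemma (it is grouped with the other building-block lemmas and dismissed as ``elementary,'' with only Lemma~\ref{le:a_1a^1} worked out), so your write-up is filling in an omitted argument rather than paralleling one. Your overall strategy --- exhibit the quadrilateral, check Definition~\ref{def:characterization}, and extract uniqueness from the same inequalities --- is the right one, and the existence half is correct (one small slip: the active diagonals are parallel to $\alpha_0$ for $2\le j\le k$ and to $\alpha_1$ for \emph{all} $j\ge k+1$, not ``$\alpha_0$ in the outer ranges, $\alpha_1$ in the middle'').

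Two deductions in your uniqueness step, however, do not follow as stated. First, condition~(iii) together with $\mu^\infty=\mu_\infty$ does \emph{not} force $\olambda=\emptyset$: when the two segments are non-parallel, (iii) permits $\lambda=\emptyset$ to be obtained from $\olambda$ by removing a part equal to the width, so $\olambda$ could a priori be a single part of that size, consistent with (iv). Second, the ``at least one equality'' clause at $j=k+1$ only yields the disjunction ``$\oa_{k+1}=a_k$ or $\oa_1=(k-1)a_k+ka_{k+1}$''; it does not force $\omu_k=\omu_\infty$. Both gaps are closed by a single chain of inequalities. The required inequality $(\mu_{k+1}-\omu_{k},\omega_0)\le 0$ from (i) gives $(\omu_k,\omega_0)\ge(\mu_{k+1},\omega_0)=(k-1)a_k+ka_{k+1}$, while pairing the closure of the left boundary path with $\omega_0$ (and using $\mu^0=\mu_{k+1}$, which your right-side analysis already establishes) gives
$$(k-1)a_k+ka_{k+1}=(\mu^0,\omega_0)=(\omu_k,\omega_0)+\sum_{j>k}j\,\oa_j+|\olambda|+\sum_{j\ge 2}(j-1)\,\oa^j.$$
Every summand after $(\omu_k,\omega_0)$ is nonnegative, so all of them vanish: $\oa_j=0$ for $j>k$, $\olambda=\emptyset$, and $\oa^j=0$ for $j\ge2$. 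Combined with your observation that $(\omu_k-\mu_{k-1},\omega_1)\le0$ forces $\oa_2=\cdots=\oa_k=0$ (note this uses the inequality plus nonnegativity of the $\oa_j$, not the equality clause), the closure identity then determines $\oa_1$ and $\oa^1$ as claimed. With this repair the argument is complete.
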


\begin{Lemma} \label{le:a_1a^1}
Theorem~\ref{th:Lusztig-data} holds for a Lusztig data of the form
$(a_1,0,\ldots,0,a^1)$. Moreover, for the MV polytope with this
right Lusztig data:
\begin{enumerate}
\item \label{a_1a^1:LeftLuszData}
The left Lusztig data is given as follows:
\begin{itemize}
\item
If $a_1=a^1$, then $\oa_k=\oa^k=0$ for all $k$ and
$\overline\lambda=(a_1\geq0\geq0\geq\ldots)$ (left picture below).
\item
If $a_1<a^1$, then there is an integer $r\geq1$ such that
$\frac{r-1}r\leq\frac{a_1}{a^1}\leq\frac r{r+1}$ and
$$\begin{pmatrix}\oa_r\\\oa_{r+1}\end{pmatrix}=
\begin{pmatrix}r&-(r+1)\\-(r-1)&r\end{pmatrix}
\begin{pmatrix}a^1\\a_1\end{pmatrix};$$
all the other $\oa_k$ are zero, as are all the $\oa^k$
and all the $\overline\lambda_k$(right picture below).
\item
The case $a_1>a^1$ can be obtained from the previous one by
vertical reflection.
\end{itemize}
\begin{center}
\begin{tikzpicture}[xscale=0.3, yscale=0.1]
\begin{scope}
\draw 
(0,0) node {$\bullet$}
(0,12) node {$\bullet$}
(6,6) node {$\bullet$};

\draw [line width = 0.05cm] 
(0,0)--(0,12)--(6,6)--cycle;

\draw
(3.5,12) node {$a^1$}
(3.5,0.6) node {$a_1$}
(-1,6) node {$\overline\lambda$};
\end{scope}
\begin{scope}[xshift=22cm, yshift=0cm]
\draw 
(0,0) node {$\bullet$}
(-1,5) node {$\bullet$}
(-2,12) node {$\bullet$}
(5,5) node {$\bullet$};

\draw [line width = 0.05cm] 
(0,0)--(-1,5)
(-1,5)--(-2,12)
(-2,12)--(5,5)
(0,0)--(5,5);

\draw
(2.7,11) node {$a^1$}
(3,0.3) node {$a_1$}
(-1.7,1.2) node {$\oa_r$}
(-3.3,9.1) node {$\oa_{r+1}$};
\end{scope}
\end{tikzpicture}
\end{center}
\item \label{a_1a^1:Vertices}
Let $k\geq2$. In order that $\mu^0=\omu_k$, it is necessary
and sufficient that $ka_1\leq(k-1)a^1$, and then
$\oa_k=\max(0,(k-1)a_1-(k-2)a^1)$.
In order that $\mu_0=\omu_{k-2}$, it is necessary and
sufficient that $(k-2)a^1\leq(k-1)a_1$, and then
$\oa_{k-1}=\max(0,(k-1)a^1-ka_1)$.
\end{enumerate}
\end{Lemma}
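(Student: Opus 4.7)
My plan is to handle the lemma case by case, according to the three cases in part~(i). The case $a_1 > a^1$ follows from $a_1 < a^1$ via the vertical reflection of Remark~\ref{rem:rotate} (which swaps $a_k \leftrightarrow a^k$ and $\oa_k \leftrightarrow \oa^k$), so I focus on $a_1 = a^1$ and $a_1 < a^1$.

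For $a_1 = a^1$, I would take the candidate $P$ to be the triangle with vertices $\mu_0$, $\mu_1 = \mu_0 + a_1\alpha_1$, $\mu^0 = \mu_0 + a_1\delta$, decorated by $\lambda = \emptyset$ and $\overline\lambda = (a_1)$ on the single vertical left edge. Verifying Definition~\ref{def:characterization} reduces to pairing computations using $(\alpha_i,\omega_j)=\delta_{ij}$ and $(\delta,\omega_i)=1$; the substantive check is~(iii), where $(\mu_\infty,\omu_\infty)$ is parallel to $\alpha_1$ and $(\mu^\infty,\omu^\infty)$ is parallel to $\alpha_0$, so they are non-parallel, and $\overline\lambda$ must be obtained from $\lambda = \emptyset$ by inserting a single part of size $(\mu_\infty-\omu_\infty,\alpha_1)/2 = a_1$, forcing $\overline\lambda = (a_1)$.

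For $a_1 < a^1$, I would first note existence and uniqueness of an integer $r\geq 1$ with $\frac{r-1}{r}\leq\frac{a_1}{a^1}\leq\frac{r}{r+1}$ by monotonicity of $n\mapsto 1-\tfrac{1}{n}$, and then define $\oa_r, \oa_{r+1}$ by the displayed matrix formula, both nonnegative by these bounds. Instead of verifying the MV axioms from scratch, I would invoke Lemma~\ref{le:a_ka_k+1}: applying it with the roles $a_k = \oa_r$, $a_{k+1} = \oa_{r+1}$ produces an MV polytope $P'$ whose right Lusztig data has exactly these two consecutive nonzero entries at positions $r, r+1$. Taking the horizontal reflection (Remark~\ref{rem:rotate}) yields an MV polytope $P$ whose left Lusztig data is $(0,\ldots,0,\oa_r,\oa_{r+1},0,\ldots)$, and whose right Lusztig data is, by Lemma~\ref{le:a_ka_k+1}'s output formula applied to $P'$, exactly $((r-1)\oa_r + r\oa_{r+1},\,0,\ldots,0,\,r\oa_r + (r+1)\oa_{r+1})$. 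A direct matrix inversion confirms this equals $(a_1, 0,\ldots,0,a^1)$, as desired. Uniqueness of $P$ then follows from uniqueness in Lemma~\ref{le:a_ka_k+1} together with the fact that the MV axioms force the left Lusztig data of any solution into this two-edge form (since $\lambda = \emptyset$ forces $|\overline\lambda|=0$, and the active-diagonal constraints leave no room for additional edges).

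For part~(ii), $\mu^0 = \omu_k$ is equivalent to $\omu_k = \omu_\infty$, i.e.~$\oa_{k+1} = \oa_{k+2} = \cdots = 0$. In the case $a_1 < a^1$ this holds iff $k\geq r+1$, and a short rearrangement using $r \leq \tfrac{a^1}{a^1-a_1} < r+1$ shows $k \geq r+1 \iff ka_1 \leq (k-1)a^1$; the formula $\oa_k = \max(0,(k-1)a_1 - (k-2)a^1)$ then reproduces $\oa_{r+1}$ at $k=r+1$ and gives $0$ for $k \geq r+2$. The cases $a_1 = a^1$ and $a_1 > a^1$ are checked directly (where $\omu_\infty = \mu_0 \ne \mu^0$ unless degenerate, so the hypothesis $ka_1 \leq (k-1)a^1$ only holds at degenerate values of $a_1$), and the statement about $\mu_0 = \omu_{k-2}$ is the symmetric version for the bottom of the left side. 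The main obstacle I foresee is the boundary values $\tfrac{a_1}{a^1}\in\{\tfrac{r-1}{r}, \tfrac{r}{r+1}\}$, at which one of $\oa_r, \oa_{r+1}$ vanishes and $r$ is only determined up to a shift; careful tracking is needed to verify that both permissible choices of $r$ yield the same polytope and that the max-formula in~(ii) remains consistent.
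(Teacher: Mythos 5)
Your existence arguments are sound, and the reduction of the $a_1<a^1$ case to Lemma~\ref{le:a_ka_k+1} via horizontal reflection is a legitimate alternative to the paper's route (the paper instead derives the entire answer by a structural analysis of an arbitrary MV polytope with the given right Lusztig data). The matrix computation checks out: the reflected polytope does have right Lusztig data $(a_1,0,\ldots,0,a^1)$, and your treatment of part~(\ref{a_1a^1:Vertices}) as a corollary of part~(\ref{a_1a^1:LeftLuszData}) matches the paper.

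The genuine gap is uniqueness, which is half of what Theorem~\ref{th:Lusztig-data} asserts. For $a_1=a^1$ you only exhibit one MV polytope and never rule out others; for instance, the parallelogram with $\oa_1=a^1$ and $\oa^1=a_1$ has the correct right Lusztig data and must be excluded because it violates the ``at least one equality'' requirement in Definition~\ref{def:characterization}~(\ref{top-part}). For $a_1<a^1$ your justification rests on the claim that $\lambda=\emptyset$ forces $|\olambda|=0$; as a general principle this is false --- it fails precisely in the $a_1=a^1$ case, where $\olambda=(a_1)$ --- and in the $a_1<a^1$ case it is true only because one can first show $\omu_\infty=\omu^\infty=\mu^0$, which is itself part of what needs proving. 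Likewise ``the active-diagonal constraints leave no room for additional edges'' is exactly the substantive point, not a remark. The missing argument (which is the heart of the paper's proof) runs: (a) if the $\alpha_0$-diagonal $(\omu^\infty,\mu^\infty)$ is active then $\omu^\infty=\mu^0$, and if the $\alpha_1$-diagonal $(\mu^\infty,\omu^\infty)$ is active then $\omu^\infty=\mu_0$, so $\omu^\infty,\omu_\infty\in\{\mu_0,\mu^0\}$; (b) comparing $\mu^0-\mu_0=a_1\alpha_1+a^1\alpha_0$ with the allowed edge directions rules out all configurations except $\omu_\infty=\omu^\infty=\mu^0$ when $a_1<a^1$ (and $\omu_\infty=\mu_0$, $\omu^\infty=\mu^0$ when $a_1=a^1$); (c) if some $\omu_r$ differs from both $\mu_0$ and $\mu^0$, then neither $(\omu_r,\mu_{r-1})$ nor $(\mu_{r+1},\omu_r)$ can be active, so $(\mu_r,\omu_{r-1})$ and $(\omu_{r+1},\mu_r)$ are, forcing $\omu_{r-1}=\mu_0$ and $\omu_{r+1}=\mu^0$. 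This pins the left side down to the two-consecutive-edge form and yields uniqueness; without it your proposal is only an existence proof.
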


\begin{Lemma} \label{le:a_1a_k}
Theorem~\ref{th:Lusztig-data} holds for a Lusztig data of the form
$(a_1,0,\ldots,0,a_k,0,\ldots,0)$. If $a_1\geq(k-2)a_k$, then
the left Lusztig data is $(0,\ldots,0,\oa_{k-1},0,\ldots,0,\oa^1)$,
with $\oa_{k-1}=a_k$ and $\oa^1=a_1+2a_k$ (left picture below).
If $a_1\leq(k-2)a_k$, then $\oa^1=ka_k$ and the MV polytope is
obtained by stacking a triangle above the MV polytope with right
Lusztig data $(a_1,0,\ldots,0,(k-1)a_k)$ (right picture
below). In any case, $\oa^1=\max(a_1+2a_k,ka_k)$.
\begin{center}
\begin{tikzpicture}[xscale=0.54, yscale=0.18]
\begin{scope}
\draw 
(0,0) node {$\bullet$}
(3,3) node {$\bullet$}
(4,8) node {$\bullet$}
(-1,3) node {$\bullet$};

\draw[line width = 0.05cm] 
(0,0)--(3,3)
(3,3)--(4,8)
(4,8)--(-1,3)
(-1,3)--(0,0);

\draw
(1.6,0) node {$a_1$}
(4,5) node {$a_k$}
(1.6,7.4) node {$\oa^1$}
(-1.3,0.7) node {$\oa_{k-1}$};
\end{scope}
\begin{scope}[xshift=11.5cm, yshift=0cm]
\draw
(0,0) node {$\bullet$}
(-1,1) node {$\bullet$}
(-2,4) node {$\bullet$}
(1,1) node {$\bullet$}
(2,8) node {$\bullet$};

\draw[line width = 0.05cm]
(1,1)--(0,0)--(-1,1)--(-2,4)--(2,8)--(1,1)--(-2,4);

\draw
(.8,-1) node {$a_1$}
(2.2,5) node {$a_k$}
(2.2,1) node {$\mu_{k-1}$}
(-2.7,4) node {$\omu_k$}
(0,7.6) node {$\oa^1$};
\end{scope}
\end{tikzpicture}
\end{center}
\end{Lemma}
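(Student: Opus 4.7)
The plan is to construct the polytope directly in each case and verify the MV axioms of Definition \ref{def:characterization}. In Case 1 ($a_1 \geq (k-2)a_k$), I build the quadrilateral described in the lemma: place $\mu_0$ at the origin, $\mu_1 = a_1\alpha_1$, $\mu_k = \mu_1 + a_k(\alpha_1+(k-1)\delta)$, and $\omu_{k-1} = a_k(\alpha_0+(k-2)\delta)$; all remaining vertices collapse onto one of these four because the rest of the proposed Lusztig data vanishes. The closure identity
\[
a_1\alpha_1 + a_k(\alpha_1+(k-1)\delta) = a_k(\alpha_0+(k-2)\delta) + \oa^1\alpha_1
\]
forces $\oa^1 = a_1 + 2a_k$. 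Almost every MV inequality is vacuous, since the relevant vertices coincide; the one genuine check is at the corner $k'=k-1$, where $(\omu_{k-1}-\mu_1,\omega_1)=(k-2)a_k-a_1 \leq 0$ is precisely the hypothesis, and $(\mu_{k-1}-\omu_{k-2},\omega_0)=0$ supplies the required equality. Parts (\ref{part:middle1})--(\ref{part:middle2}) are immediate since $\lambda=\olambda=\emptyset$.

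In Case 2 ($a_1 \leq (k-2)a_k$), I invoke Lemma \ref{le:a_1a^1} to obtain the MV polytope $P'$ whose right Lusztig data has $a_1^{P'} = a_1$ on the lower right and $a^{1}_{P'} = (k-1)a_k$ on the upper right (all else zero); its top-right edge runs from $\mu_1^{P'}$ up-left to $\mu^0_{P'}$ in direction $\alpha_0$ of length $(k-1)a_k$. I then form $P$ by attaching above $P'$ the triangle with new apex $\mu_k = \mu^0_P$, right edge $a_k(\alpha_1+(k-1)\delta)$ issuing from $\mu_{k-1}^P = \mu_1^{P'}$, and left edge $ka_k\alpha_1$ ending at $\mu_k$ from the top vertex $\mu^0_{P'}$. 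The triangle closes because
\[
a_k(\alpha_1+(k-1)\delta) - ka_k\alpha_1 - (k-1)a_k\alpha_0 = 0,
\]
so the third edge is exactly the (now internal) $\alpha_0$-edge of $P'$, and $\oa^1 = ka_k$ is read directly off the left edge. The right Lusztig datum of $P$ consists of the bottom-right edge $a_1\alpha_1$ inherited from $P'$ together with the new triangle edge at position $k$, giving $(a_1, 0, \ldots, 0, a_k)$ as required.

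The MV axioms for $P$ split into conditions at indices below the triangle (inherited from $P'$) and new ones at $k'=k-1,k$, both of which hold with equalities since $\mu_k - \omu_{k-1}$ is a positive multiple of $\alpha_1$ and $\omu_k-\mu_{k-1}$ is a negative multiple of $\alpha_0$. The unified formula $\oa^1 = \max(a_1+2a_k, ka_k)$ follows because the two expressions agree at the boundary $a_1=(k-2)a_k$ and each dominates on its own side. Uniqueness is inherited from Theorem \ref{th:Lusztig-data} once existence is established; alternatively, the MV corner inequalities at $k'=k-1$ (in Case 1) or $k'=k$ (in Case 2) pin down $\omu_{k-1}$, which then determines all remaining vertices on the left side. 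The main obstacle I anticipate is the bookkeeping of which vertices of $P$ coincide with which vertices of $P'$, especially since Lemma \ref{le:a_1a^1} itself has three sub-cases depending on the ratio $a_1/(k-1)a_k$; but once the attachment geometry of the triangle is understood, the verification is mechanical.
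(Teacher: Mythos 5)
The paper never writes out a proof of this lemma (the building-block lemmas are dismissed as ``proven by elementary arguments,'' with Lemma~\ref{le:a_1a^1} worked out as the model), so you are supplying the whole argument. Your existence half is essentially correct: the quadrilateral in Case~1 closes precisely when $\oa^1=a_1+2a_k$, the only binding inequality is $(\omu_{k-1}-\mu_{k-2},\omega_1)=(k-2)a_k-a_1\leq0$, and in Case~2 the triangle glues onto the top $\alpha_0$-edge of the Lemma~\ref{le:a_1a^1} quadrilateral because $a_k(\alpha_1+(k-1)\delta)=(k-1)a_k\alpha_0+ka_k\alpha_1$. Two small points: $\omu_k-\mu_{k-1}$ is a \emph{positive} multiple of $\alpha_0$, not a negative one (harmless, since it pairs to zero with $\omega_1$ either way); and in Case~2 you should record that $a_1\leq(k-2)a_k$ forces the index $r$ of Lemma~\ref{le:a_1a^1} to satisfy $r\leq k-2$ (or $r=k-1$ with $\oa_{r+1}=0$), which is exactly what makes the vertex $\omu_{k-1}$ of the glued polytope coincide with the top vertex of $P'$ and hence makes the conditions at $k'=k-1,k$ come out as equalities.

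The genuine gap is uniqueness, which is half of what Theorem~\ref{th:Lusztig-data} asserts and is precisely what the main induction consumes when it cuts along an active diagonal and declares the bottom piece ``fully determined by Lemma~\ref{le:a_1a_k}.'' Your first route --- ``uniqueness is inherited from Theorem~\ref{th:Lusztig-data}'' --- is circular, since this lemma is a base case used inside the proof of that theorem. Your second route is too thin: one corner inequality does not pin down the left side, because a priori the left side could carry nonzero $\oa^j$ for $j\geq2$, a nonzero decoration $\olambda$, and lower-left edges $\oa_j$ distributed over several indices. What is actually needed is the analogue of the paper's argument for Lemma~\ref{le:a_1a^1}: (a) the conditions of Definition~\ref{def:characterization} at large $k'$ force $\oa^j=0$ for $j\geq2$ and $\olambda=0$, so $\omu_\infty=\omu^\infty=\omu^1$; (b) the inequality $(\mu_k-\omu_{k-1},\omega_0)\leq0$ together with the weight constraint forces $\oa_j=0$ for $j\geq k$ and $\sum_j j\,\oa_j=(k-1)a_k$; (c) the ``intermediate vertex'' argument --- if $\omu_j$ is distinct from both $\mu_0$ and $\omu_\infty$ then neither diagonal through it is active, so the adjacent ones are, forcing its neighbours to the extremes --- restricts the support of $(\oa_j)$ to at most two consecutive indices; and (d) the resulting linear system, split according to whether the active diagonal at $k'=k-1$ is $(\mu_{k-1},\omu_{k-2})$ or $(\omu_{k-1},\mu_{k-2})$, reproduces exactly your two cases and determines the data. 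None of this is in your write-up, and it is not mechanical in the way the existence verification is.
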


\begin{Lemma} \label{le:a_ka^1}
Theorem~\ref{th:Lusztig-data} holds for a Lusztig datum of the form
$(0,\ldots,0,a_k,0,\ldots,0,a^1)$, where $k\geq2$. If $a^1\geq(k+1)a_k$,
then the left Lusztig data is $(\oa_1,0,\ldots,0,\oa_{k+1},0,\ldots,0)$,
with $\oa_1=a^1-2a_k$ and $\oa_{k+1}=a_k$ (left picture below). If
$a^1\leq(k+1)a_k$, then $\oa_1=(k-1)a_k$ and the MV polytope is
obtained by stacking the MV polytope with right Lusztig data
$(ka_k,0,\ldots,0,a^1)$ above a triangle (right picture below).
In any case, $\oa_1=\max(a^1-2a_k,(k-1)a_k)$.
\begin{center}
\begin{tikzpicture}[xscale=0.54, yscale=0.18]
\begin{scope}
\draw 
(0,0) node {$\bullet$}
(-3,3) node {$\bullet$}
(-4,8) node {$\bullet$}
(1,3) node {$\bullet$};

\draw[line width = 0.05cm] 
(0,0)--(-3,3)
(-3,3)--(-4,8)
(-4,8)--(1,3)
(1,3)--(0,0);

\draw
(-1.6,-0.1) node {$\oa_1$}
(-4.4,5.4) node {$\oa_{k+1}$}
(-3.5,2.1) node {$\omu_k$}
(-1,7.3) node {$a^1$}
(1,0.6) node {$a_k$}
(-4.2,10) node {$\omu_{k+1}$};
\end{scope}
\begin{scope}[xshift=9cm, yshift=0cm]
\draw
(0,0) node {$\bullet$}
(1.6,4.8) node {$\bullet$}
(-1.6,8) node {$\bullet$}
(-1.6,1.6) node {$\bullet$};

\draw[line width = 0.05cm] 
(-1.6,8)--(1.6,4.8)--(0,0)--(-1.6,1.6)--(1.6,4.8);
\draw[line width = 0.05cm, dashed]
(-1.6,8) to[out=-100,in=100] (-1.6,1.6);

\draw
(1.2,1.3) node {$a_k$}
(0,8.5) node {$a^1$}
(-1,-0.8) node {$\oa_1$}
(-2.2,1.5) node {$\omu_k$}
(2.3,4.5) node {$\mu_k$};
\end{scope}
\end{tikzpicture}
\end{center}
\end{Lemma}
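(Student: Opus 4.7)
I will handle the two cases of the hypothesis separately: in each I construct an explicit MV polytope and verify Definition~\ref{def:characterization}, then establish uniqueness by extracting constraints on the left Lusztig data from the MV conditions. Since the polytope lies in the span of $\alpha_0, \alpha_1$ and $\delta = \alpha_0 + \alpha_1$, every relevant vector $v$ can be written $v = a\alpha_0 + b\alpha_1$ with $(v,\omega_0) = a$ and $(v,\omega_1) = b$; I use this throughout.

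In \emph{Case~1} ($a^1 \geq (k+1)a_k$), I propose the quadrilateral with vertices $\mu_0$, $\mu_k = \mu_\infty = \mu^\infty$, $\mu^0$, and $\omu_k = \omu_\infty = \omu^\infty$, placing $\omu_k - \mu_0 = (a^1 - 2a_k)\alpha_0$ and $\mu^0 - \omu_k = a_k(\alpha_0 + k\delta)$. Closure is a one-line calculation. Conditions (iii)--(iv) are automatic since $\lambda = \olambda = 0$. For (i)--(ii), at every index $j \neq k$ the relevant inner product on the $\omega_1$ (resp.~$\omega_0$) side vanishes and the other is manifestly $\leq 0$; at $j = k$ one has $(\omu_k - \mu_{k-1}, \omega_1) = 0$ (giving the required equality) and $(\mu_k - \omu_{k-1}, \omega_0) = (k+1)a_k - a^1 \leq 0$ by hypothesis.

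In \emph{Case~2} ($a^1 \leq (k+1)a_k$), I invoke Lemma~\ref{le:a_1a^1} to produce the MV polytope $P'$ with right Lusztig data $(ka_k, 0, \ldots, 0, a^1)$, and glue a triangle below it. The triangle has vertices $\mu_0$ (new bottom), $\mu_k = \mu_1'$, and $\omu_k = \mu_0'$, with edges in directions $\alpha_1 + (k-1)\delta,\ \alpha_1,\ \alpha_0$ of lengths $a_k,\ ka_k,\ (k-1)a_k$. A direct computation gives $\mu_k - \omu_k = ka_k\alpha_1$, matching the bottom-right edge of $P'$, so the gluing is well-defined and produces a decorated GGMS polytope with the claimed right Lusztig data and $\oa_1 = (k-1)a_k$. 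The MV conditions above the glue are inherited from $P'$ (by the converse of Remark~\ref{rem:cut-into-quads}), below they are trivial, and at $j = k$ both inequalities collapse to the equality $0$ using $\omu_{k-1} = \omu_k$ in $P$'s labeling.

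\textbf{Uniqueness and the max formula.} For any MV polytope $P$ with the prescribed right Lusztig data, condition (i) applied successively at $j = 2, \ldots, k$ forces $\oa_2 = \cdots = \oa_k = 0$ and $\oa_1 \geq (k-1)a_k$. At $j = k+1$ the mandatory equality is either $\oa_{k+1} = a_k$ or $\oa_1 = (k-1)a_k$. In the first option, projecting the closure relation for $\mu^0 - \mu_0$ onto $\omega_1$ forces $\olambda = 0$ and all $\oa^j = 0$ (by non-negativity of the remaining summands), and then the $\omega_0$ projection pins $\oa_1 = a^1 - 2a_k$, giving the Case~1 quadrilateral (which requires $a^1 \geq (k+1)a_k$ for $\oa_1 \geq 0$). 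In the second option the diagonal $\omu_k \to \mu_k$ is active, the sub-polytope above it has right Lusztig data $(ka_k, 0, \ldots, 0, a^1)$, and it is determined uniquely by Lemma~\ref{le:a_1a^1}, giving the Case~2 structure. The formula $\oa_1 = \max(a^1 - 2a_k,\ (k-1)a_k)$ follows at once from comparing the two possible values.

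\textbf{Main obstacle.} The subtle point is the Case~2 verification: the left Lusztig data of the glued polytope depends on three subcases of Lemma~\ref{le:a_1a^1} according to the comparison of $ka_k$ and $a^1$, so $P'$'s left data may contribute to $\oa_r, \oa_{r+1}, \oa^r, \oa^{r+1}$, or $\olambda$ in different subcases. However, the identity $\oa_1 = (k-1)a_k$ is uniform across subcases, and the gluing-preserves-MV principle (Remark~\ref{rem:cut-into-quads}) transfers the MV conditions above the glue directly from $P'$, letting one avoid a subcase-by-subcase verification.
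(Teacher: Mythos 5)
The paper never actually writes out a proof of this lemma --- it is one of the ``elementary arguments'' left to the reader, with only Lemmas \ref{le:a_1a^1} and \ref{le:basecases} proved in detail --- so you are filling a genuine omission, and your strategy (explicit construction of the candidate polytope in each regime, then uniqueness by reading off the forced equalities in Definition \ref{def:characterization}) is exactly the template of the paper's proof of Lemma \ref{le:a_1a^1} and of the main induction. Your existence verifications check out: in Case 1 the inner products are as you say, and in Case 2 the straddling condition at index $k+1$ really does reduce to the corresponding condition for $P'$ because $\omu_1'=\cdots=\omu_k'=\mu_0'$ and $\mu_1'=\cdots=\mu_k'$ in $P'$, so ``inherited from $P'$'' is legitimate once that is observed. (Two small slips: in Case 1 the left vertex is $\omu_k$ but the top vertex is $\omu_{k+1}=\omu_\infty=\omu^\infty=\mu^0$, not $\omu_k=\omu_\infty$; and in the uniqueness paragraph the constraint forcing $a^1\geq(k+1)a_k$ is $\oa_1\geq(k-1)a_k$, which you derived, not $\oa_1\geq 0$, which would only give $a^1\geq 2a_k$.)

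The one genuine gap is in the uniqueness argument: you show that any MV polytope with the given right data is either the Case 1 quadrilateral (possible only when $a^1\geq(k+1)a_k$) or the Case 2 glued polytope, but you never show the two options are mutually exclusive away from the boundary $a^1=(k+1)a_k$. If for some $a^1>(k+1)a_k$ the glued polytope were also MV, there would be two distinct MV polytopes with the same right Lusztig data and uniqueness would fail; this is precisely the point the paper's induction steps are careful about (each branch there is shown to force a complementary inequality on the data). The fix is short and uses only what you already have: if option 2 held, cutting along the active diagonal $(\mu_{k+1},\omu_k)$ exhibits the upper piece as the MV polytope with right data $(ka_k,0,\ldots,0,a^1)$, and your earlier deduction $\oa_2=\cdots=\oa_k=0$ forces its left data to vanish in positions $1$ through $k$; but Lemma \ref{le:a_1a^1}(\ref{a_1a^1:LeftLuszData}) with $a_1'=ka_k<a^1$ puts its nonzero left entries at positions $r,r+1$ with $\frac{r-1}{r}\leq\frac{ka_k}{a^1}\leq\frac{r}{r+1}$, and the requirement $r\geq k+1$ (or $\oa_k'=k\bigl(a^1-(k+1)a_k\bigr)=0$ when $r=k$) is equivalent to $a^1\leq(k+1)a_k$. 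Adding that sentence closes the argument.
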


\begin{Lemma} \label{le:basecases}
Theorem~\ref{th:Lusztig-data} holds for a Lusztig datum of the form
$(a_1,0,\ldots,0,\lambda,0,\ldots,0,a^1)$.
\end{Lemma}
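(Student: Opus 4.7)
The proof proceeds by explicitly constructing the polytope $P_{\bf a}$ for the Lusztig data ${\bf a} = (a_1, 0, \ldots, 0, \lambda, 0, \ldots, 0, a^1)$ and verifying conditions (i)--(iv) of Definition \ref{def:characterization}. When $|\lambda| = 0$ the statement reduces to Lemma \ref{le:a_1a^1}, so we may assume $|\lambda| \geq 1$; by the horizontal reflection symmetry of Remark \ref{rem:rotate} we may further assume $a_1 \leq a^1$.

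The plan is then to split into three cases according to the size of $\lambda_1$. In the easiest case $\lambda_1 \leq a_1 \leq a^1$, we begin with the polytope from Lemma \ref{le:a_1a^1} for $(a_1,0,\ldots,0,a^1)$, whose width is $\max(a_1,a^1) = a^1 \geq \lambda_1$, and insert in its middle a vertical slab of height $|\lambda|\delta$ decorated on the right by $\lambda$. The partition $\olambda$ is then forced by condition (iii): either $\olambda = \lambda$ (when the bottom diagonal $(\mu_\infty,\omu_\infty)$ and the top diagonal $(\mu^\infty,\omu^\infty)$ of the slab are parallel) or $\olambda$ is obtained from $\lambda$ by adding or removing a single part of size equal to the width. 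In the intermediate case $a_1 < \lambda_1 \leq a^1$, extra left edges are needed to widen the polytope; we solve for the appropriate $\oa_k$ and $\oa^k$ so that the width becomes exactly $\max(\lambda_1, a^1)$. In the hardest case $\lambda_1 > a^1$, we build $P_{\bf a}$ by stacking: a polytope from Lemma \ref{le:a_1a_k} on the bottom, a rectangular imaginary slab in the middle, and a polytope from Lemma \ref{le:a_ka^1} on the top, with the integer parameters chosen so that both halves have width $\lambda_1$.

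In each case we read off the full left Lusztig data $(\oa_k, \olambda_k, \oa^k)$ from the construction and verify the four MV polytope conditions directly. Since each piece of the construction is an MV polytope by one of the previous lemmas and the middle slab is a parallelogram, the only nontrivial checks concern conditions (iii)--(iv) for the whole polytope and the fact that the diagonals along which we have glued are active; both follow from the choice of parameters. Uniqueness is immediate, because conditions (i)--(iv), together with the prescribed right Lusztig data, force the width $w = (\mu_\infty - \omu_\infty, \alpha_1)/2$ and hence the complete left Lusztig data.

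The main obstacle will be the last case $\lambda_1 > a^1$, where the indices $k$ for the bottom and top pieces must be chosen consistently so that the polytope closes up correctly and $\olambda$ has the right parts. The key is that the widths of the bottom and top pieces in Lemmas \ref{le:a_1a_k} and \ref{le:a_ka^1} are governed by explicit formulas, which allow us to tune them to $\lambda_1$, and that the resulting bookkeeping for $\olambda$ is dictated by whether the two halves meet the slab along parallel segments.
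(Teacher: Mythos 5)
Your overall strategy -- case analysis on the relative sizes of $a_1$, $a^1$, $\lambda_1$, explicit construction by stacking previously established pieces, then reading off $\overline{\mathbf a}$ and checking Definition \ref{def:characterization} -- is exactly the paper's, and your first case is essentially correct (this is the paper's trapezoid case $a_1=a^1\geq\lambda_1$ and its case $a^1>a_1$, $a^1\geq\lambda_1$, where one glues a parallelogram with vertical sides $|\lambda|\delta$ onto the quadrilateral of Lemma~\ref{le:a_1a^1} along the diagonal $(\omu_\infty,\mu_\infty)$). Two corrections there: the reflection that lets you assume $a_1\leq a^1$ is the \emph{vertical} one of Remark~\ref{rem:rotate} (horizontal reflection exchanges right and left Lusztig data, not $a_1$ and $a^1$); and your ``intermediate case'' $a_1<\lambda_1\leq a^1$ needs no extra left edges at all -- the width of the $(a_1,0,\ldots,0,a^1)$ quadrilateral at the level of $\mu_\infty$ is already $a^1\geq\lambda_1$, so the same slab insertion works verbatim.

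The genuine gap is in your hardest case $\lambda_1>\max(a_1,a^1)$. The pieces you propose to stack, taken from Lemmas~\ref{le:a_1a_k} and~\ref{le:a_ka^1}, have right Lusztig data of the form $(a_1,0,\ldots,0,a_k,0,\ldots)$ and $(0,\ldots,0,a_k,0,\ldots,0,a^1)$ with $a_k\neq0$ for some $k\geq2$; stacking them makes those edges part of the right side of the assembled polytope, so the result cannot have right Lusztig data $(a_1,0,\ldots,0,\lambda,0,\ldots,0,a^1)$. The correct decomposition (which is what the paper does) cuts along the active $\alpha_0$-diagonal $(\omu_\infty,\mu_\infty)$ and the active $\alpha_1$-diagonal $(\mu^\infty,\omu^\infty)$, both of length $\lambda_1$: the bottom piece is the Lemma~\ref{le:a_1a^1} quadrilateral for $(a_1,0,\ldots,0,\lambda_1)$, the top piece is the one for $(\lambda_1,0,\ldots,0,a^1)$, and the middle piece is a \emph{trapezoid}, not a rectangle -- its right vertical edge carries $\lambda$ while its left vertical edge carries $\overline\lambda=(\lambda_2\geq\lambda_3\geq\cdots)$, i.e.\ $\lambda$ with a part of size $\lambda_1=$ width removed, which is precisely what condition~(\ref{part:middle1}) demands since the two bounding diagonals are not parallel. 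Finally, your uniqueness claim (``the conditions force the width'') is the right idea but is not immediate: one has to run the active-diagonal dichotomy of Definition~\ref{def:characterization}(\ref{top-part})--(\ref{bottom-part}) to show the width must equal $\max(a_1,a^1,\lambda_1)$, which is what pins down which of the four configurations occurs.
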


\begin{proof}[Proof of Lemmas \ref{le:a_ka_k+1}, \ref{le:a_1a^1}, \ref{le:a_1a_k}, \ref{le:a_ka^1}, and \ref{le:basecases}]

These are all proven by elementary arguments. As an example,
we show Lemma~\ref{le:a_1a^1}. We also provide some details for Lemma \ref{le:basecases}, since this requires a little more care than the others.

{\bf Proof for Lemma~\ref{le:a_1a^1}:}
We first notice that if the $\alpha_0$-diagonal
$(\omu^\infty,\mu^\infty)$ is active, then
$\omu^\infty=\mu^0$, and that if the $\alpha_1$-diagonal
$(\mu^\infty,\omu^\infty)$ is active,
then $\omu^\infty=\mu_0$. So $\omu^\infty\in\{\mu^0,\mu_0\}$, 
and likewise $\omu_\infty\in\{\mu^0,\mu_0\}$.

Consider the case $a_1=a^1$. If we had $\omu^\infty\neq\mu^0$,
then on the one hand, $\mu^0-\omu^\infty$ would be of the form
$x\alpha_1+y\delta$ with $x>0$, and on the other hand, $\omu^\infty$
would be equal to $\mu_0$: this is clearly impossible, because
$\mu^0-\mu_0=a_1\delta$. Therefore $\omu^\infty=\mu^0$. Likewise,
$\omu_\infty=\mu_0$. Condition~(\ref{part:middle1}) in
Definition~\ref{def:characterization} then gives the announced
result.

Consider now the case $a_1<a^1$. This time,
$\omu^\infty=\omu_\infty=\mu^0$. If some vertex $\omu_r$ for $r \geq 2$ is different
from both $\mu_0$ and $\mu^0$, then neither $(\omu_r,\mu_{r-1})$
nor $(\mu_{r+1},\omu_r)$ is active, so $(\mu_r,\omu_{r-1})$ and
$(\omu_{r+1},\mu_r)$ are active, which forces $\omu_{r-1}=\mu_0$
and $\omu_{r+1}=\mu^0$, and by a similar argument the same conclusion holds when $r=1$.
A straightforward calculation concludes the
proof of (\ref{a_1a^1:LeftLuszData}). The statement
(\ref{a_1a^1:Vertices}) is an immediate corollary of
(\ref{a_1a^1:LeftLuszData}).

{\bf Sketch of proof for Lemma \ref{le:basecases}:}
Here we need to distinguish between four cases.
\begin{itemize}
\item
If $a_1=a^1\geq\lambda_1$, then $\omu_\infty=\mu_0$ and
$\omu^\infty=\mu^0$, and therefore the polytope $P_{\mathbf a}$
is a trapezoid with left decoration $\overline\lambda=(a_1\geq
\lambda_1\geq\lambda_2\geq\cdots)$ (left picture below).
\item
If $\lambda_1>\max(a_1,a^1)$, then the $\alpha_0$-diagonal
$(\omu_\infty,\mu_\infty)$ and the $\alpha_1$-diagonal
$(\mu^\infty,\omu^\infty)$ are
active, and the polytope $P_{\mathbf a}$ is obtained by stacking
the MV polytope with right Lusztig data $(a_1,0,\ldots,0,\lambda_1)$,
a trapezoid with decoration $\overline\lambda=(\lambda_2\geq\lambda_3
\geq\cdots)$, and the MV polytope with right Lusztig data
$(\lambda_1,0,\ldots,0,a^1)$ (central picture below).
\item
If $a^1>a_1$ and $a^1\geq\lambda_1$, then $\omu^\infty=\mu^0$, the
$\alpha_0$-diagonal $(\omu_\infty,\mu_\infty)$ is active, and the polytope
$P_{\mathbf a}$ is obtained by stacking the MV polytope with right
Lusztig data $(a_1,0,\ldots,0,a^1)$ and the parallelogram with
left decoration $\lambda$ (right picture below).
\item
The case $a_1>a^1$ and $a_1\geq\lambda_1$ can be obtained from the
previous one by vertical reflection.
\end{itemize}
\begin{center}
\begin{tikzpicture}
\begin{scope}[xscale=0.6, yscale=0.2]
\draw
(0,0) node {$\bullet$}
(2,2) node {$\bullet$}
(2,8) node {$\bullet$}
(0,10) node {$\bullet$};

\draw[line width = 0.05cm] 
(0,0)--(2,2)--(2,8)--(0,10)--cycle;

\draw
(0,-1.8) node {$\omu_\infty$}
(2.8,2) node {$\mu_\infty$}
(2.8,8) node {$\mu^\infty$}
(0,11.8) node {$\omu^\infty$}
(1.3,-0.4) node {$a_1$}
(2.4,5) node {$\lambda$}
(-0.6,5) node {$\overline\lambda$}
(1.3,10.8) node {$a^1$};
\end{scope}
\begin{scope}[xscale=0.5, yscale=0.17, xshift=10cm, yshift=-1cm]
\draw 
(0,0) node {$\bullet$}
(2,2) node {$\bullet$}
(-2,6) node {$\bullet$}
(-2,10) node {$\bullet$}
(2,14) node {$\bullet$}
(1,15) node {$\bullet$};

\draw[line width = 0.05cm] 
(0,0)--(2,2)--(2,14)--(1,15)
(2,2)--(-2,6)--(-2,10)--(2,14);
\draw[line width = 0.05cm, dashed]
(0,0) to[out=130,in=-80] (-2,6)
(-2,10) to[out=60,in=-140] (1,15); 

\draw
(0,-1.6) node {$\mu_0$}
(2.8,2) node {$\mu_\infty$}
(-2.6,5.4) node {$\omu_\infty$}
(-2.6,11) node {$\omu^\infty$}
(2.8,14) node {$\mu^\infty$}
(1,16.8) node {$\mu^0$}
(1.3,-0.4) node {$a_1$}
(2.4,8) node {$\lambda$}
(-2.7,8.2) node {$\overline\lambda$}
(2,16) node {$a^1$};
\end{scope}
\begin{scope}[xscale=0.5, yscale=0.17, xshift=19cm, yshift=0cm]
\draw
(0,0) node {$\bullet$}
(2,2) node {$\bullet$}
(2,8) node {$\bullet$}
(-2,12) node {$\bullet$}
(-2,6) node {$\bullet$};

\draw[line width = 0.05cm] 
(0,0)--(2,2)--(2,8)--(-2,12)--(-2,6)--(2,2);
\draw[line width = 0.05cm, dashed]
(0,0) to[out=130,in=-80] (-2,6);

\draw
(0,-1.6) node {$\mu_0$}
(2.9,1.6) node {$\mu_\infty$}
(-2.7,5.4) node {$\omu_\infty$}
(-2,13.8) node {$\omu^\infty$}
(2.8,9) node {$\mu^\infty$}
(1.3,-0.4) node {$a_1$}
(2.5,5) node {$\lambda$}
(-2.6,9) node {$\lambda$}
(0,6) node {$a^1$}
(0,12) node {$a^1$};
\end{scope}
\end{tikzpicture}
\end{center}
\end{proof}

The existence and uniqueness of $P_{\mathbf a}$ will be established
alongside the following properties.

\begin{Proposition} \label{pr:a1not0}
Let $\mathbf a$ be a Lusztig datum such that $a_1>0$ and set
$\mathbf b=(0,a_2,a_3,\ldots,a^1)$. Let $k\geq2$ be such that
$a_2=\cdots=a_{k-1}=0$. Then $(k-2)\ob_1\leq(k-1)a_1$ if and
only if $\oa_1=\cdots=\oa_{k-2}=0$, and if these assertions hold
true, then $\oa_{k-1}=\max(a_k,(k-1)\ob_1-ka_1)$.
\end{Proposition}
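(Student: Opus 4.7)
The plan is to carry out a direct geometric analysis of the MV conditions at the bottom of $P_{\mathbf a}$, comparing with $P_{\mathbf b}$ (which is assumed known by induction on height, since $a_1>0$ makes $P_{\mathbf b}$ strictly shorter). First I would translate $P_{\mathbf b}$ so that its bottom vertex $\mu_0^{\mathbf b}$ is identified with $\mu_1^{\mathbf a}$. Since $a_j=b_j$ for $j\geq2$, the right boundaries then agree from $\mu_1^{\mathbf a}$ upward, and the coinciding chain $\mu_1^{\mathbf a}=\cdots=\mu_{k-1}^{\mathbf a}$ is identified with $\mu_0^{\mathbf b}=\cdots=\mu_{k-1}^{\mathbf b}$. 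A short calculation in the root plane, using $\mu_0^{\mathbf a}=\mu_1^{\mathbf a}-a_1\alpha_1$ and $\bar\mu_1^{\mathbf b}=\mu_1^{\mathbf a}+\bar b_1\alpha_0$, shows that the vector $\bar\mu_1^{\mathbf b}-\mu_0^{\mathbf a}$ points in direction $\alpha_0+(k-2)\delta$ precisely when $(k-2)\bar b_1=(k-1)a_1$. Thus the numerical inequality in the statement is the assertion that this vector is at least as steep as $\alpha_0+(k-2)\delta$, i.e.\ that $\bar\mu_1^{\mathbf b}$ lies on or to the right of the ray from $\mu_0^{\mathbf a}$ in direction $\alpha_0+(k-2)\delta$.

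For the forward direction, assume $(k-2)\bar b_1\leq(k-1)a_1$ and walk up the left boundary of $P_{\mathbf a}$ from $\bar\mu_0^{\mathbf a}=\mu_0^{\mathbf a}$; at each level $j$ with $2\leq j\leq k-1$, invoke Definition~\ref{def:characterization}(\ref{top-part}) for the pair of diagonals $(\mu_j^{\mathbf a},\bar\mu_{j-1}^{\mathbf a})$ and $(\bar\mu_j^{\mathbf a},\mu_{j-1}^{\mathbf a})$. Since $\mu_j^{\mathbf a}=\mu_{j-1}^{\mathbf a}=\mu_1^{\mathbf a}$ throughout this range and the only polytope vertex below $\mu_1^{\mathbf a}$ is $\mu_0^{\mathbf a}$, these activity conditions reduce to explicit linear inequalities in the $\bar a_i$ which, combined with the geometric interpretation above, inductively force $\bar\mu_j^{\mathbf a}=\mu_0^{\mathbf a}$ for every $j\leq k-2$; hence $\bar a_1=\cdots=\bar a_{k-2}=0$. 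The value $\bar a_{k-1}$ is then pinned down by Definition~\ref{def:characterization}(\ref{top-part}) at level $k$: the two candidate active diagonals give $\bar a_{k-1}=a_k$ (when the $\alpha_1$-diagonal $(\mu_k^{\mathbf a},\bar\mu_{k-1}^{\mathbf a})$ is active) and $\bar a_{k-1}=(k-1)\bar b_1-ka_1$ (when the $\alpha_0$-diagonal $(\bar\mu_k^{\mathbf a},\mu_{k-1}^{\mathbf a})=(\bar\mu_1^{\mathbf b},\mu_1^{\mathbf a})$ is active), so the requirement that both inequalities hold and at least one be an equality forces the maximum $\bar a_{k-1}=\max(a_k,(k-1)\bar b_1-ka_1)$.

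The converse is the contrapositive of the same walk: if $\bar a_1=\cdots=\bar a_{k-2}=0$ then $\bar\mu_{k-2}^{\mathbf a}=\mu_0^{\mathbf a}$, and applying Definition~\ref{def:characterization}(\ref{top-part}) at level $k-1$ to the pair $(\mu_{k-2}^{\mathbf a},\bar\mu_{k-2}^{\mathbf a})=(\mu_1^{\mathbf a},\mu_0^{\mathbf a})$ translates directly into the non-strict inequality $(k-2)\bar b_1\leq(k-1)a_1$. The main obstacle will be handling the two regimes $\bar b_1\leq a_1$ and $\bar b_1>a_1$ uniformly: in the first, the second term in the maximum is non-positive so $a_k$ dominates and the bottom of $P_{\mathbf a}$ looks like the quadrilateral of Lemma~\ref{le:a_1a_k}, while in the second the $\bar b_1$ term can take over and one must carefully verify that the resulting $\bar\mu_{k-1}^{\mathbf a}$ is consistent with the rest of $P_{\mathbf b}$ above. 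Invoking Lemmas~\ref{le:a_1a_k} and~\ref{le:basecases} as black boxes for the small sub-polytopes sitting at the bottom of $P_{\mathbf a}$ should streamline the numerical bookkeeping.
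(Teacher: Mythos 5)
There is a genuine gap at the foundation of your argument: you treat the translated vertex $\omu_1^{\mathbf b}$ of the \emph{separate} polytope $P_{\mathbf b}$ as though it carried geometric information about $P_{\mathbf a}$. That identification is only valid when the $\alpha_0$-diagonal $(\omu_m,\mu_{m-1})$ is active in $P_{\mathbf a}$, where $m$ is the \emph{smallest} index with $a_m>0$ (so $k\leq m$): in that case cutting along the diagonal exhibits $P_{\mathbf b}$ as the top piece and a quadrilateral with right data $(a_1,0,\ldots,0,\ob_1)$ as the bottom piece, and the whole proposition reduces to Lemma~\ref{le:a_1a^1}~(\ref{a_1a^1:Vertices}). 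In the complementary case the $\alpha_1$-diagonal $(\mu_m,\omu_{m-1})$ is active instead; the bottom of $P_{\mathbf a}$ is then the quadrilateral of Lemma~\ref{le:a_1a_k}, so $\oa_1=\cdots=\oa_{m-2}=0$ and $\oa_{m-1}=a_m$ hold for structural reasons having nothing to do with $\ob_1$, the translated $P_{\mathbf b}$ does not sit inside $P_{\mathbf a}$ at all, and one must \emph{separately} prove both the inequality $(k-2)\ob_1\leq(k-1)a_1$ and that $a_m$ dominates $(m-1)\ob_1-ma_1$ in the maximum. The paper does this via the quantitative criterion $(m-1)\ob_1\gtrless ma_1+a_m$ distinguishing the two cases, which is itself extracted from Proposition~\ref{pr:a10} applied to $\mathbf b$ (giving $\ob_1=\max(\od_1-2a_m,(m-1)a_m+ma_{m+1})$) and from the analysis of the auxiliary polytope $P_{\mathbf c}$. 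None of this appears in your proposal: your ``walk up the left boundary'' never actually uses the hypothesis on $\ob_1$ except through the unjustified identification of $\omu_1^{\mathbf b}$ with a feature of $P_{\mathbf a}$; your converse likewise produces only $(k-2)\oa_{k-1}\leq a_1$ from the activity condition, and converting this into a statement about $\ob_1$ again requires knowing which diagonal at level $m$ is active; and the dichotomy $\ob_1\leq a_1$ versus $\ob_1>a_1$ you propose to handle at the end is not the operative one.

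A secondary but real issue is logical order. In the paper this proposition is proved \emph{simultaneously} with the existence and uniqueness of $P_{\mathbf a}$ (Theorem~\ref{th:Lusztig-data}) and with Proposition~\ref{pr:a10}, by induction on height. You invoke $P_{\mathbf b}$ by induction but then reason about the vertices $\omu_j^{\mathbf a}$ as if the structure of $P_{\mathbf a}$ --- in particular, which of the two diagrams in~\eqref{mc1} applies --- were already available; at the relevant stage of the induction that structure is exactly what is being constructed, and it is the input your argument is missing. You would also need to treat the degenerate case $a_\ell=0$ for all $\ell\geq2$, where the active diagonal may be $(\mu_\infty,\omu_\infty)$ and your level-by-level walk does not apply.
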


\begin{Proposition} \label{pr:a10}
Let $\mathbf a$ be a Lusztig datum and let $k\geq2$
such that $a_1=\cdots=a_{k-1}=0$. Set
$\mathbf d=(0,\ldots,0,a_{k+1},a_{k+2},\ldots,a^1)$.
Then $\oa_1=\max(\od_1-2a_k,(k-1)a_k+ka_{k+1})$
and $\oa_2=\cdots=\oa_k=0$.
\end{Proposition}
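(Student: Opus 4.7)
The plan is to prove this simultaneously with Proposition~\ref{pr:a1not0} and Theorem~\ref{th:Lusztig-data}, by induction on the height of $\mathbf a$, with the preceding lemmas supplying the base cases. In the inductive step one has $P_{\mathbf d}$ at hand: its height is strictly smaller when $a_k > 0$, and the case $a_k = 0$ reduces to applying the proposition to $\mathbf d = \mathbf a$ with $k$ replaced by $k+1$.

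First I would establish that $\oa_2 = \cdots = \oa_k = 0$. Since $a_1 = \cdots = a_{k-1} = 0$ gives $\mu_0 = \mu_1 = \cdots = \mu_{k-1}$, I induct on $j$ from $2$ to $k$: assuming $\oa_2 = \cdots = \oa_{j-1} = 0$, a direct computation in the $(\alpha_0, \alpha_1)$-basis shows that the $\alpha_1$-coefficient of $\omu_j - \mu_0$ equals $(j-1)\oa_j$, which is $\leq 0$ by the first inequality of Definition~\ref{def:characterization}(\ref{top-part}); non-negativity of Lusztig data then forces $\oa_j = 0$.

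Next, for the formula I would analyze the active-diagonal condition at position $k+1$. With the vanishing just obtained, one computes the $(\alpha_0, \alpha_1)$-coordinates of $\omu_{k+1} - \mu_k$ and $\mu_{k+1} - \omu_k$; Definition~\ref{def:characterization}(\ref{top-part}) at position $k+1$ amounts to $\oa_{k+1} \leq a_k$ and $\oa_1 \geq (k-1)a_k + ka_{k+1}$, with at least one equality. If $(\mu_{k+1}, \omu_k)$ is active, this yields $\oa_1 = (k-1)a_k + ka_{k+1}$ directly. If instead $(\omu_{k+1}, \mu_k)$ is active, then $\oa_{k+1} = a_k$; cutting along this diagonal via Remark~\ref{rem:cut-into-quads}, the upper piece is an MV polytope whose right Lusztig datum equals $\mathbf d$ and whose bottom $\alpha_0$-edge has length $\oa_1 + 2a_k$, so by the inductive hypothesis it coincides with $P_{\mathbf d}$, giving $\od_1 = \oa_1 + 2a_k$. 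Combining the two cases with the surviving inequality in each yields $\oa_1 = \max(\od_1 - 2a_k,\, (k-1)a_k + ka_{k+1})$.

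For existence, when $\od_1 - 2a_k \geq (k-1)a_k + ka_{k+1}$, I would glue to the bottom of $P_{\mathbf d}$ a quadrilateral with vertices $\mu_0,\, \mu_0 + (\od_1 - 2a_k)\alpha_0,\, \mu_0 + (\od_1 - 2a_k)\alpha_0 + a_k(\alpha_0 + k\delta),\, \mu_0 + a_k(\alpha_1 + (k-1)\delta)$, whose top edge matches $P_{\mathbf d}$'s bottom $\alpha_0$-edge. In the complementary regime I would place the Lemma~\ref{le:a_ka_k+1} quadrilateral on $(a_k, a_{k+1})$ at the bottom and construct the upper portion by the mutual induction, invoking Proposition~\ref{pr:a1not0} applied to the resulting new bottom-left $\alpha_0$-edge of length $(k-1)a_k + ka_{k+1}$. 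The main obstacle is this existence step in the complementary regime: ensuring compatibility of the upper piece with the glued bottom (and verifying conditions~(\ref{part:middle1}) and~(\ref{part:middle2}) of Definition~\ref{def:characterization}) requires careful coordination through the mutual induction.
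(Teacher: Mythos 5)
Your strategy is the paper's own: Proposition \ref{pr:a10} is proved there simultaneously with Theorem \ref{th:Lusztig-data} and Proposition \ref{pr:a1not0} by induction on height, cutting along whichever level-$(k+1)$ diagonal is active and identifying the bottom piece via Lemmas \ref{le:a_ka_k+1} and \ref{le:a_ka^1}. Your direct derivation of $\oa_2=\cdots=\oa_k=0$ from Definition \ref{def:characterization}(\ref{top-part}) together with $\mu_0=\cdots=\mu_{k-1}$ is correct and is a clean shortcut for something the paper leaves implicit in its construction, and your computation $\od_1=\oa_1+2a_k$ when $(\omu_{k+1},\mu_k)$ is active is right.

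There is, however, one genuine gap: the claim that ``combining the two cases with the surviving inequality in each yields $\oa_1=\max(\od_1-2a_k,(k-1)a_k+ka_{k+1})$.'' In the case where $(\omu_{k+1},\mu_k)$ is active this works, since there $\oa_1=\od_1-2a_k$ and the surviving inequality is precisely $\oa_1\geq(k-1)a_k+ka_{k+1}$. But in the case where $(\mu_{k+1},\omu_k)$ is active, the surviving inequality is $\oa_{k+1}\leq a_k$, and you still must show $\od_1-2a_k\leq(k-1)a_k+ka_{k+1}$; this does not follow directly, because in that case the piece above the cut is $P_{\mathbf c}$ with $c_1=ka_k+(k+1)a_{k+1}$, not $P_{\mathbf d}$, so $\od_1$ is not visible in the polytope at all. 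To close it you must unwind $\oa_{k+1}=\oc_{k+1}=\max\bigl(c_{k+2},(k+1)\od'_1-(k+2)c_1\bigr)$ via Proposition \ref{pr:a1not0} applied to $\mathbf c$ (where $\mathbf d'$ drops both $a_k$ and $a_{k+1}$), deduce $a_{k+2}\leq a_k$ and $\od'_1\leq(k+1)a_k+(k+2)a_{k+1}$, and then feed these into $\od_1=\max\bigl(\od'_1-2a_{k+1},ka_{k+1}+(k+1)a_{k+2}\bigr)$ from the inductive instance of Proposition \ref{pr:a10}. This is exactly the computation the paper carries out in the uniqueness paragraph of its second induction step; it is available within the mutual induction you set up, but it is a nontrivial missing link rather than an immediate combination. (The existence step in the complementary regime, which you flag yourself, is filled in the paper by the same pair of estimates run in reverse.)
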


\begin{proof}[Proof of Theorem~\ref{th:Lusztig-data}, Proposition~\ref{pr:a1not0} and Proposition~\ref{pr:a10}]
We prove these simultaneously by induction on the height $(\wt(\mathbf a), \omega_0+ \omega_1)$ of the Lusztig data $\mathbf a$, the case where the height is $0$ being trivial. The inductive step breaks into three cases, each of which 
requires several arguments. 
\medbreak

\noindent\textbf{Induction step when $a_1>0$ and there exists $\ell\geq2$ such that $a_\ell>0$:}

Denote by $k$ the smallest $\ell\geq2$ such that $a_\ell > 0$.
Set $\mathbf b=(0,\ldots,0,a_k,a_{k+1},\ldots,a^1)$,
$\mathbf c=(c_1,0,\ldots,0,a_{k+1},a_{k+2},\ldots,a^1)$,
where $c_1=\max(a_1+2a_k,ka_k)$,
and $\mathbf d=(0,\ldots,0,a_{k+1},a_{k+2},\ldots,a^1)$.
By induction, we know that $P_{\mathbf b}$ and $P_{\bf c}$ exist and are
unique. By the inductive assumption, Proposition~\ref{pr:a10} gives
$\ob_1=\max(\od_1-2a_k,(k-1)a_k+ka_{k+1})$ and
$\ob_2=\cdots=\ob_k=0$, where $\overline{\mathbf b}$ is the left Lusztig data of $P_{\bf b}$. 

\medbreak

By definition, at least one of the diagonals $(\omu_k,\mu_{k-1})$
or $(\mu_k,\omu_{k-1})$ is active in any MV polytope with right Lusztig data $\mathbf a$, which gives two possibilities:

\begin{equation} \label{mc1}
\begin{tikzpicture}[yscale=0.15, xscale=0.45]
\begin{scope}
\draw [line width = 0.05cm](3,3)--(-2,8);
\draw(1.5,-0.15) node {$a_1$};
\draw(3.9,3.8) node {$a_k$};
\draw(5.3,9.5) node {$a_{k+1}$};
\draw(0,8) node {$\ob_1$};
\draw(1.6,10) node {$P_{\mathbf b}$};
\draw(-2.8,8) node {$\omu_k$};

\draw (0,0) node {$\bullet$};
\draw (3,3) node {$\bullet$};
\draw (3.5,5.5) node {$\bullet$};
\draw (-1,3) node {$\bullet$};
\draw (-2,8) node {$\bullet$};

\draw [line width = 0.05cm](0,0)--(3,3);
\draw [line width = 0.05cm](3,3)--(3.5,5.5);
\draw [line width = 0.05cm](3.5,5.5)--(4.4,12);
\draw [line width = 0.05cm](0,0)--(-1,3);
\draw [line width = 0.05cm](-1,3)--(-2,8);
\draw [line width = 0.05cm](-2,8)--(-2.5,11.5);
\end{scope}
\begin{scope}[xshift=14cm, yshift=-1.4cm]
\draw [line width = 0.05cm](-1,3)--(5,9);
\draw(2.5,0.4) node {$a_1$};
\draw(5.2,5.5) node {$a_k$};
\draw(6.7,11.5) node {$a_{k+1}$};
\draw(2,7.6) node {$c_1$};
\draw(0.8,11) node {$P_{\mathbf c}$};

\draw (1,1) node {$\bullet$};
\draw (4,4) node {$\bullet$};
\draw (5,9) node {$\bullet$};
\draw (-1,3) node {$\bullet$};

\draw [line width = 0.05cm](1,1)--(4,4);
\draw [line width = 0.05cm](4,4)--(5,9);
\draw [line width = 0.05cm](5,9)--(5.8,14.6);
\draw [line width = 0.05cm, dashed] (1,1) .. controls (-0.2,1.2) and (-0.2,1.2) .. (-1,3);
\draw [line width = 0.05cm](-1,3)--(-2.5,10.5);
\end{scope}
\end{tikzpicture}
\end{equation}

\noindent\textit{Uniqueness in Theorem~\ref{th:Lusztig-data}:}
Assume $Q$ is an MV polytope with right Lusztig data $\bf a$. 

\begin{itemize}
\item
If $(\omu_k,\mu_{k-1})$ is active in $Q$, then we can cut $Q$ in
two along this diagonal (left picture). The polytope above the
diagonal is necessarily $P_{\mathbf b}$, and so below the diagonal,
we have the MV polytope with right Lusztig data
$(a_1,0,\ldots,0,\ob_1)$, which is also fully determined
(Lemma~\ref{le:a_1a^1}). Thus $Q$ is fully determined. Since $Q$ is MV, in the bottom polytope, the
right upper edge must coincide with the diagonal $(\omu_k,\mu_{k-1})$, so, by Lemma~\ref{le:a_1a^1}~(\ref{a_1a^1:Vertices}), $ka_1\leq(k-1)\ob_1$ and $\oa_k=\max(0,(k-1)a_1-(k-2)\ob_1)$. Since
$$\omu_{k-1}=\mu_1+\ob_1\alpha_0-\oa_k(\alpha_0+(k-1)\delta)\quad
\text{and}\quad\mu_k=\mu_1+a_k(\alpha_1+(k-1)\delta),$$
the condition $(\omega_0,\mu_k-\omu_{k-1})\leq0$ gives
$(k-1)a_k-\ob_1+k\oa_k\leq0$, so that
$$\ob_1\geq(k-1)a_k+k((k-1)a_1-(k-2)\ob_1),$$
which rearranges to $(k-1)\ob_1\geq ka_1+a_k$.
\item
If $(\omu_k,\mu_{k-1})$ is inactive in $Q$, then $(\mu_k,\omu_{k-1})$
is active, and we can cut $Q$ along this latter diagonal (right picture).
The polytope below this diagonal is necessarily the polytope with right Lusztig
data $(a_1,0,\ldots,0,a_k,0,\ldots)$, which is fully determined by
Lemma~\ref{le:a_1a_k}. The height of ${\bf c}$ is less than the height of ${\bf a}$, so by the inductive hypothesis the MV polytope $P_{\bf c}$ above the diagonal exists and is unique. Thus $Q$ is fully determined. Since we assume $Q$ is MV and $(\omu_k, \mu_{k-1})$ is inactive, we have $(\omega_1, \omu_k-\mu_{k-1})<0$.
Since $\omu_k=\mu_k-c_1\alpha_1+\oc_k(\alpha_0+(k-1)\delta)$ and $\mu_{k-1}=\mu_k-a_k(\alpha_1+(k-1)\delta)$, this gives $ka_k+(k-1)\oc_k<c_1$. In particular, we see that $c_1>ka_k$, and so by the definition of ${\bf c}$ we have $c_1=a_1+2a_k$.

Now the bottom vertex of $P_{\mathbf c}$ is its vertex $\omu_{k-1}$,
so $\oc_1=\cdots=\oc_{k-1}=0$. By Proposition~\ref{pr:a1not0}
applied to $\mathbf c$, we have
$\oc_k=\max(c_{k+1},k\od_1-(k+1)c_1)$. It is then easy to
rewrite the inequality $ka_k+(k-1)\oc_k<c_1$ as the system
$$(k-1)\od_1<ka_1+(2k-1)a_k\quad\text{and}
\quad(k-2)a_k+(k-1)a_{k+1}<a_1.$$
Remembering $\ob_1=\max(\od_1-2a_k,(k-1)a_k+ka_{k+1})$,
we obtain $(k-1)\ob_1<ka_1+a_k$.
\end{itemize}
It follows that there is always at most one $Q \in \MV$ with right Lusztig data
$\mathbf a$, where the diagonal $(\omu_k,\mu_{k-1})$ must be active if $(k-1)\ob_1\geq ka_1+a_k$ and inactive if 
$(k-1)\ob_1< ka_1+a_k$.
\medbreak

\noindent\textit{Existence in Theorem~\ref{th:Lusztig-data}:}
\begin{itemize}
\item
Assume first that $(k-1)\ob_1\geq ka_1+a_k$. Then $(k-1)\ob_1\geq ka_1$,
so the polytope with right Lusztig datum $(a_1,0,\ldots,0,\ob_1)$ is
a quadrilateral, whose upper vertex is $\omu_k$
(Lemma~\ref{le:a_1a^1}~(\ref{a_1a^1:Vertices})).
Since $\ob_2=\cdots=\ob_k=0$, we can place the polytope $P_{\mathbf b}$
above this quadrilateral and obtain a GGMS polytope.
By Lemma~\ref{le:a_1a^1}~(\ref{a_1a^1:Vertices}), we have
$\oa_k=\max(0,(k-1)a_1-(k-2)\ob_1)$; combining $\ob_1\geq(k-1)a_k$
with $(k-1)\ob_1\geq ka_1+a_k$, we get
$\ob_1-(k-1)a_k\geq k\oa_k$. As in the uniqueness part of the proof,
we then compute $(\omega_0,\mu_k-\omu_{k-1})=(k-1)a_k-\ob_1+k\oa_k$.
This is nonpositive, hence our candidate polytope is MV.
\item
Assume now that $(k-1)\ob_1\leq ka_1+a_k$. Then
$$(k-1)\od_1\leq ka_1+(2k-1)a_k\leq k(a_1+2a_k)\leq kc_1.$$
Proposition~\ref{pr:a1not0} applied to $\mathbf c$
then guarantees that $\oc_1=\cdots=\oc_{k-1}=0$ and that
$\oc_k=\max(c_{k+1},k\od_1-(k+1)c_1)$. In particular, the bottom
vertex of $P_{\mathbf c}$ is $\omu_{k-1}$. Moreover,
$$(k-1)((k-1)a_k+ka_{k+1})\leq(k-1)\ob_1\leq ka_1+a_k$$
gives $(k-1)^2a_k\leq ka_1+a_k$, and then $a_1\geq(k-2)a_k$. Therefore
by Lemma~\ref{le:a_1a_k} the MV polytope with right Lusztig data
$(a_1,0,\ldots,0,a_k,0,\ldots)$ 
can be placed
below
$P_{\mathbf c}$ to obtain a GGMS polytope.

The inequality $a_1\geq(k-2)a_k$ gives $c_1=a_1+2a_k$.
From $(k-1)\od_1\leq ka_1+(2k-1)a_k$ and
$(k-1)((k-1)a_k+ka_{k+1})\leq ka_1+a_k$, we deduce
$ka_k+(k-1)\oc_k<c_1$. It follows that
$(\omega_0,\omu_k-\mu_{k-1})\leq0$, which
implies that our candidate polytope is MV.
\end{itemize}
\medbreak

\noindent\textit{Proposition~\ref{pr:a1not0}:}

Let $\ell\geq2$ be such that $a_2=\cdots=a_{\ell-1}=0$.
Since $a_k>0$, we necessarily have $\ell\leq k$.
\begin{itemize}
\item
Assume first that $(k-1)\ob_1\geq ka_1+a_k$. Then $P_{\mathbf a}$
is given by the left diagram in Equation \eqref{mc1}. The numbers $\oa_1$, \dots,
$\oa_\ell$ for $P_{\mathbf a}$ come from the quadrilateral at the
bottom. Applying Lemma~\ref{le:a_1a^1}~(\ref{a_1a^1:Vertices}) to
this quadrilateral, we see that $\oa_1=\cdots=\oa_{\ell-2}=0$ if and
only if $(\ell-2)\ob_1\leq(\ell-1)a_1$, and if these assertions hold
true, then $\oa_{\ell-1}=\max(0,(\ell-1)\ob_1-\ell a_1)$.

To complete the proof of Proposition~\ref{pr:a1not0}, it now
suffices to check that
$\max(0,(\ell-1)\ob_1-\ell a_1)=\max(a_\ell,(\ell-1)\ob_1-\ell a_1)$.
This equality is shown by looking separately at the cases $\ell<k$
and $\ell=k$. In the first case, one simply notices that $a_\ell=0$.
In the second case, one uses the assumption $(k-1)\ob_1\geq ka_1+a_k$.
\item
Assume now that $(k-1)\ob_1\leq ka_1+a_k$. Then $P_{\mathbf a}$
is given by the right diagram in Equation \eqref{mc1}, so we have
$\oa_1=\cdots=\oa_{k-2}=0$ and $\oa_{k-1}=a_k$. Furthermore, we
saw in the existence proof above that $a_1\geq(k-2)a_k$, which implies
$$(k-1)\ob_1\leq ka_1+\frac{a_1}{k-2}=\frac{(k-1)^2}{k-2}a_1,$$
whence $\ob_1/a_1\leq(k-1)/(k-2)\leq(\ell-1)/(\ell-2)$.
Therefore both assertions $\oa_1=\cdots=\oa_{\ell-2}=0$ and
$(\ell-2)\ob_1\leq(\ell-1)a_1$ hold true.

It remains to check
that $\oa_{\ell-1}=\max(a_\ell,(\ell-1)\ob_1-\ell a_1)$.
If $\ell\leq k-1$, this comes from the fact
that $\oa_{\ell-1}=a_\ell=0$ and that
$\ob_1/a_1\leq(k-1)/(k-2)\leq\ell/(\ell-1)$.
If $\ell=k$, this comes from the equality
$\oa_{k-1}=a_k$ and from the assumption $(k-1)\ob_1\leq ka_1+a_k$.
\end{itemize}
\medbreak

\noindent\textbf{Induction step when $a_1=0$ and $a_\ell>0$ for some $\ell\geq2$:}

Denote by $k$ the smallest value such that $a_k \neq 0$. 
We set $\mathbf b=(0,\ldots,0,a_{k+1},a_{k+2},\ldots,a^1)$,
$\mathbf c=(c_1,0,\ldots,0,a_{k+2},a_{k+3},\ldots,a^1)$,
where $c_1=ka_k+(k+1)a_{k+1}$,
and $\mathbf d=(0,\ldots,0,a_{k+2},a_{k+3},\ldots,a^1)$.
By induction, we know that $P_{\mathbf b}$ exists and is
unique, and we denote its left Lusztig datum by $\overline{\mathbf b}$.
Moreover, Proposition~\ref{pr:a10} gives
$\ob_1=\max(\od_1-2a_{k+1},ka_{k+1}+(k+1)a_{k+2})$ and
$\ob_2=\cdots=\ob_{k+1}=0$.
\medbreak

By definition, at least one of the diagonals $(\mu_{k+1},\omu_k)$
or $(\omu_{k+1},\mu_k)$ is active in any MV polytope with right Lusztig data $\mathbf a$, giving two possibilities:
\begin{equation} \label{mc2}
\begin{tikzpicture}[yscale=0.15, xscale=0.45]
\begin{scope}
\draw(-1.7,0) node {$\oa_1$};
\draw(1,0.1) node {$a_k$};
\draw(2.6,4) node {$a_{k+1}$};
\draw(-4.6, 5.1) node {$\oa_{k+1}$};
\draw(.5,9.5) node {$P_{\mathbf b}$};
\draw(-1.8,7.7) node {$\ob_1$};

\draw (0,0) node {$\bullet$};
\draw (1,3) node {$\bullet$};
\draw (-3,3) node {$\bullet$};
\draw (-4,8) node {$\bullet$};

\draw [line width = 0.05cm](0,0)--(1,3);
\draw [line width = 0.05cm](1,3)--(2.5,8.5);
\draw [line width = 0.05cm](0,0)--(-3,3);
\draw [line width = 0.05cm](-3,3)--(-4,8);
\draw [line width = 0.05cm](-4,8)--(-4.5,11.5);
\draw [line width = 0.05cm](1,3)--(-4,8);
\end{scope}
\begin{scope}[xshift=14cm, yshift=-.4cm]
\draw(1,0.1) node {$a_k$};
\draw(2.6,4) node {$a_{k+1}$};
\draw(3.5,9.6) node {$a_{k+2}$};
\draw(-2,10.5) node {$P_{\mathbf c}$};
\draw (0, 7.7) node {$c_1$};
\draw (-3.7,3) node {$\omu_k$};

\draw (0,0) node {$\bullet$};
\draw (1,3) node {$\bullet$};
\draw (2,8) node {$\bullet$};
\draw (-3,3) node {$\bullet$};

\draw [line width = 0.05cm](0,0)--(1,3);
\draw [line width = 0.05cm](1,3)--(2,8);
\draw [line width = 0.05cm](2,8)--(2.5,11.5);
\draw [line width = 0.05cm](0,0)--(-3,3);
\draw [line width = 0.05cm](-3,3)--(-4.5,8.5);
\draw [line width = 0.05cm](-3,3)--(2,8);
\end{scope}
\end{tikzpicture}
\end{equation}

\noindent\textit{Uniqueness in Theorem~\ref{th:Lusztig-data}:} Assume $Q \in \MV$ has right Lusztig data ${\bf a}$.

\begin{itemize}
\item
If $(\mu_{k+1},\omu_k)$ is active in $Q$, then we can cut $Q$ in
two along this diagonal (right picture). The polytope below is
necessarily the polytope with right Lusztig data
$(0,\ldots,0,a_k,a_{k+1},0,\ldots,0)$, which is fully determined
by Lemma~\ref{le:a_ka_k+1}. Its left upper edge has length $c_1$,
and thus the MV polytope above the diagonal is necessarily
$P_{\mathbf c}$. (Notice that the height of $\wt(\mathbf c)$ is
smaller than the height of $\wt(\mathbf a)$, so that $P_{\mathbf c}$
exists and is unique, by the induction hypothesis.) Thus $Q$ is fully determined. Since $Q$ is MV, the lower
vertex of $P_{\mathbf c}$ is $\omu_k$, so 
$\oc_1=\cdots=\oc_k=0$. Hence by Proposition~\ref{pr:a1not0} applied to $P_{\bf c}$ we have $k\od_1\leq(k+1)c_1$ and
$\oc_{k+1}=\max(c_{k+2},(k+1)\od_1-(k+2)c_1)$. Since
$$\omu_{k+1}=\omu_k+\oc_{k+1}(\alpha_0+k\delta)\quad\text{and}\quad
\mu_k=\omu_k+c_1\alpha_1-a_{k+1}(\alpha_1+k\delta),$$
the equation $(\omega_1,\omu_{k+1}-\mu_k)\leq0$ gives
$\oc_{k+1}\leq a_k$. This translates to the system
$$a_{k+2}\leq a_k\quad\text{and}\quad
\od_1\leq(k+1)a_k+(k+2)a_{k+1}.$$
Remembering that $\ob_1=\max(\od_1-2a_{k+1},ka_{k+1}+(k+1)a_{k+2})$,
we conclude that $\ob_1\leq(k+1)a_k+ka_{k+1}$.
\item
If $(\mu_{k+1},\omu_k)$ is inactive in $Q$, then $(\omu_{k+1},\mu_k)$
is active, and we can cut $Q$ along this latter (left picture).
The polytope above the diagonal is necessarily $P_{\mathbf b}$, and
so below the diagonal we have the MV polytope with right Lusztig
data $(0,\ldots,0,a_k,0,\ldots,0,\ob_1)$. The latter is fully
determined by Lemma~\ref{le:a_ka^1}. Thus $Q$ is fully determined, and we even know that
$\oa_1=\max(\ob_1-2a_k,(k-1)a_k)$. Now $(\mu_{k+1},\omu_k)$ is inactive,
so $(\omega_0,\mu_{k+1}-\omu_k)<0$. Since $\omu_k=\oa_1\alpha_0$
and $\mu_{k+1}=a_k(\alpha_1+(k-1)\delta)+a_{k+1}(\alpha_1+k\delta)$,
this gives $\oa_1>(k-1)a_k+ka_{k+1}$, and therefore
$\ob_1>(k+1)a_k+ka_{k+1}$.
\end{itemize}
It follows that there can only ever be at most one $Q \in \MV$ with right Lusztig data $\bf a$, where the diagonal $(\mu_{k+1},\omu_k)$ must be active 
if $\ob_1 \leq(k+1)a_k+ka_{k+1}$, and inactive if $\ob_1>(k+1)a_k+ka_{k+1}$.
\medbreak

\noindent\textit{Existence in Theorem~\ref{th:Lusztig-data}:}
\begin{itemize}
\item
Assume first that $\ob_1\geq(k+1)a_k+ka_{k+1}$. Then a fortiori
$\ob_1\geq(k+1)a_k$, and so, by Lemma~\ref{le:a_ka^1}, the MV
polytope with right Lusztig data $(0,\ldots,0,a_k,0,\ldots,0,\ob_1)$
is the quadrilateral at the bottom on the left diagram in Equation \eqref{mc2}. For
this polytope, the vertex $\omu_{k+1}$ coincides with the top vertex
and we have $\oa_{k+1}=a_k$. Remembering that $\ob_2=\cdots=\ob_{k+1}=0$,
we see that we can place $P_{\mathbf b}$ above this quadrilateral and
get a GGMS polytope. A direct computation then gives
$(\omega_0,\mu_{k+1}-\omu_k)=ka_{k+1}-\ob_1+(k+1)\oa_{k+1}$. This
is nonpositive, hence our candidate polytope is MV.
\item
Assume now that $\ob_1\leq(k+1)a_k+ka_{k+1}$. Then
$\od_1\leq(k+1)a_k+(k+2)a_{k+1}$ and  $a_{k+2}\leq a_k$,
and therefore
$$k\od_1\leq k(k+1)a_k+k(k+2)a_{k+1}\leq(k+1)c_1,$$
where the last inequality comes because $c_1= k a_k + (k+1) a_{k+1}$.
Proposition~\ref{pr:a1not0} applied to $\mathbf c$ leads to
$\oc_1=\cdots=\oc_k=0$ and $\oc_{k+1}=\max(c_{k+2},(k+1)\od_1-(k+2)c_1)$.
Therefore, the bottom vertex of $P_{\mathbf c}$ is $\omu_k$, so
$P_{\mathbf c}$ can be stacked above the MV polytope with right
Lusztig data $(0,\ldots,0,a_k,a_{k+1},0,\ldots,0)$, as shown on
the right picture in \eqref{mc2}. Furthermore, since
$$(k+1)\od_1-(k+2)c_1\leq(k+1)\bigl[(k+1)a_k+(k+2)a_{k+1}\bigr]
-(k+2)c_1=a_k,$$
we get $\oc_{k+1}\leq a_k$. This shows that
$(\omega_1,\omu_{k+1}-\mu_k)=k\oc_{k+1}-c_1+(k+1)a_{k+1}$ is
nonpositive, which implies that our candidate polytope is MV.
\end{itemize}
\medbreak

\noindent\textit{Proposition~\ref{pr:a10}:}

This is immediate from the explicit construction of $P_{\mathbf a}$
given above and from Lemmas~\ref{le:a_ka_k+1} and~\ref{le:a_ka^1}.
\medbreak

\noindent{\bf Inductive step where $a_k=0$ for all $k\geq2$:}
\medbreak

\noindent\textit{Theorem~\ref{th:Lusztig-data}:}

If $a^k=0$ for all $k\geq2$, then Theorem~\ref{th:Lusztig-data}
for $\mathbf a$ follows from Lemma~\ref{le:basecases} (these cases should really be thought of as the initialization of the induction).

Suppose now that there is $k\geq2$ such that $a^k>0$, and
consider the Lusztig data
$\mathbf a'=(a^1,a^2,\ldots,\lambda,\ldots,a_2,a_1)$.
Applying one of our induction steps above to $\mathbf a'$, we see that
Theorem~\ref{th:Lusztig-data} holds for $\mathbf a'$. (One notices
here that $\wt(\mathbf a')$ and $\wt(\mathbf a)$ have the same
height, so this trick does not drive us backwards in the induction.)
The image of $P_{\mathbf a'}$ by vertical reflection is then the
unique MV polytope $P_{\mathbf a}$.
\medbreak

\noindent\textit{Proposition~\ref{pr:a1not0}:}

Our assumption here is that $a_1>0$ and $a_k=0$ for all $k\geq2$, which
means that $\mu_1=\mu_\infty$. Let ${\bf b}$, ${\bf \ob}$ be as in the statement. There is a unique $P_{\bf b} \in \MV$ with right Lusztig data $\bf b$. If $\ob_1 \geq a_1$, then by Lemma \ref{le:a_1a^1} we can glue $P_{\bf b}$ and $P_{\mathbf d}$ for ${\bf d}=(a_1,0,\ldots,0,\ob_1)$ to get an MV polytope, and the statement follows from Lemma \ref{le:a_1a^1} \eqref{a_1a^1:Vertices}. Otherwise, the $\alpha_0$ diagonal $(\omu_\infty, \mu_\infty)$ cannot be active, and so instead the $\alpha_1$ diagonal $(\mu_\infty, \omu_\infty)$ must be active. Then all the $\oa_k$ are in fact $0$, and again the statement holds. 
\medbreak

\noindent\textit{Proposition~\ref{pr:a10}:}

We are here in the situation where $a_k=0$ for all $k\geq1$,
whence $\mu_0=\mu_\infty$. Then the $\alpha_0$-diagonal
$(\omu_\infty,\mu_\infty)$ is active in $P_{\mathbf a}$ and
$\omu_\infty=\omu_1$, which implies that $\oa_k=0$ for all
$k\geq2$. The result then follows from the observation that
$\mathbf d=\mathbf a$.

\medbreak

These three cases cover all possibilities, so the induction is complete.
\end{proof}

\begin{Proposition} \label{pr:a1ora1}
For any non-trivial $P\in\MV$, either $a_1$ or $\oa_1$ is nonzero,
and either $a^1$ or $\oa^1$ is non-zero.
\end{Proposition}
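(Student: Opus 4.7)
The plan is to use the negation symmetry from Remark~\ref{rem:rotate} to reduce both assertions to a single case: it suffices to prove that $a_1 = \oa_1 = 0$ forces $P$ to be trivial, since negation interchanges the subscripted data $(a_k, \oa_k)$ with the superscripted data $(a^k, \oa^k)$.

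Assuming $a_1 = \oa_1 = 0$, I would run an induction on $k \geq 1$ to show $a_k = \oa_k = 0$ for all $k$. The base case comes from the hypothesis ($\mu_1 = \omu_1 = \mu_0$). For the inductive step, once $\mu_{k-1} = \omu_{k-1} = \mu_0$, the vector $\omu_k - \mu_{k-1}$ equals $\oa_k(\alpha_0 + (k-1)\delta)$, so $(\omu_k - \mu_{k-1}, \omega_1) = (k-1)\oa_k \geq 0$. Condition~(\ref{top-part}) of Definition~\ref{def:characterization} forces this to be non-positive, hence $\oa_k = 0$; the symmetric pairing of $\mu_k - \omu_{k-1}$ with $\omega_0$ gives $a_k = 0$. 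Thus $\mu_\infty = \omu_\infty = \mu_0$, so the width $(\mu_\infty - \omu_\infty, \alpha_1)/2$ is zero; condition~(\ref{part:middle2}) together with the non-negativity of partition parts then forces $\lambda = \olambda = 0$, whence $\mu^\infty = \omu^\infty = \mu_0$ as well.

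For the upper half, rather than restart the induction from the top (which would need $a^1 = \oa^1 = 0$ as a base case, not yet available), I would use the closure of the polygon directly. Since $\mu^0 = \omu^0$ is the top vertex and $\mu^\infty = \omu^\infty = \mu_0$, telescoping the top edges on each side gives
$$\sum_{k \geq 1} a^k(\alpha_0 + (k-1)\delta) = \mu^0 - \mu_0 = \sum_{k \geq 1} \oa^k(\alpha_1 + (k-1)\delta).$$
Expanding both sides in the basis $\{\alpha_0, \alpha_1\}$ (using $\delta = \alpha_0 + \alpha_1$) and equating coefficients yields the pair $\sum_k k a^k = \sum_k (k-1)\oa^k$ and $\sum_k (k-1)a^k = \sum_k k \oa^k$; their difference is $\sum_k a^k = -\sum_k \oa^k$, and by non-negativity all $a^k$ and $\oa^k$ vanish. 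Hence $\mu^0 = \mu_0$, so $P$ is the single point, contradicting non-triviality.

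The main obstacle is bridging the central strip: the induction that collapses the lower half cannot be immediately rerun at the top without a new base case, and it would be awkward to extract one from a case analysis on the active diagonals above. The cleanest resolution is the two-equation closure computation above, which bypasses the need for any further induction.
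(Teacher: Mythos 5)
Your proof is correct, and it takes a mildly but genuinely different route from the paper's in the first half. The paper disposes of the lower Lusztig data by citing Proposition~\ref{pr:a10} (with a horizontal reflection for the $\oa_k$), i.e.\ it leans on a fact established inside the big induction proving Theorem~\ref{th:Lusztig-data}; you instead collapse the lower vertices by a direct induction using only the defining inequalities of Definition~\ref{def:characterization}~(\ref{top-part}), computing $(\omu_k-\mu_{k-1},\omega_1)=(k-1)\oa_k$ once $\mu_{k-1}=\omu_{k-1}$. This is more elementary and self-contained, since it makes the proposition independent of the existence/uniqueness machinery; the paper's version is shorter on the page but imports more. From $\mu_\infty=\omu_\infty$ onward the two arguments coincide: condition~(\ref{part:middle2}) kills the decorations, and your two-equation closure computation is exactly the paper's ``obvious weight considerations'' made explicit (a finite non-negative combination of the roots $\alpha_0+(k-1)\delta$ can equal one of the roots $\alpha_1+(k-1)\delta$ only if both are zero). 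Your explicit reduction of the second assertion to the first via negation is also sound, since negation sends the pair $(a^1,\oa^1)$ of $P$ to the pair $(\ob_1,b_1)$ of $-P$ and preserves $\MV$ and triviality; the paper leaves this step implicit.
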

\begin{proof}
Let $\mathbf a$ be a Lusztig data such that $a_1=\oa_1=0$.
By Proposition~\ref{pr:a10}, we see that all $a_k$ are zero.
The same argument applied after horizontal reflection
gives that all $\oa_k$ are zero. In the MV polytope $P_{\mathbf a}$,
we thus have $\mu_0=\mu_\infty=\omu_\infty$. By Definition
\ref{def:characterization}~(\ref{part:middle2}), we get
$\lambda=\overline\lambda=0$. We thus have $\mu^\infty=\omu^\infty$.
Obvious weight considerations then show that all $a^k$ and all
$\oa^k$ are zero. Thus $\mathbf a$ is trivial.
\end{proof}

\begin{Remark} \label{rem:G=phi}
The induction used to prove Theorem~\ref{th:Lusztig-data} can be
viewed as an algorithm that explicitly constructs $P_{\mathbf a}$
and $\overline{\mathbf a}$ from $\mathbf a$. Following this
algorithm step by step, one can check that
\begin{align*}
\oa_1 = \max \bigl\{ & (k-1) a_k + (k-2) a_{k-1} - 2 a_{k-2} - \cdots - 2a_2 - 2 a_1, \text{ for } k \ge 2, \\
&\lambda_1 -  \cdots - 2a_3-2a_2-2a_1, \\
& k a^k + (k+1) a^{k+1}  + 2 a^{k+2} + 2 a^{k+3} + \cdots - 2 a_{k-2} - \cdots - 2a_2 - 2 a_1, \text{ for }k \ge 1 \bigr\}.
\end{align*}
\end{Remark}

\section{Crystal structure}

\begin{Definition} \label{def:crystal-ops}
Fix $P \in \MV$ with right and left Lusztig data ${\bf a}$ and ${\bf \oa}$ respectively. Then
$e_0(P)$ is the MV polytope with right Lusztig data $e_0({\bf a})$ and $e_1(P)$ is the MV polytope with left Lusztig datum $e_1({\bf \oa})$, where $e_0(\bf a)$ agrees with $\bf a$ except that $e_0({\bf a})^1= a^1+1$, and $e_1({\bf \oa})$ agrees with ${\bf \oa}$ except that $e_1({\bf \oa})^1 = \oa^1+1$. 

Similarly, $f_0(P)$ is the MV polytope with right data $f_0({\bf a})$ and $f_1(P)$ is the MV polytope with left Lusztig data $f_1({\bf \oa})$, where $f_0(\bf a)$ agrees with $\bf a$ except that $f_0({\bf a})^1= a^1-1$ and $f_1({\bf \oa})$ agrees with ${\bf \oa}$ except that $f_1({\bf \oa})^1 = \oa^1-1$, and if $a^1$ or $\oa^1$ are zero, then $f_0$ or $f_1$ sends that polytope to $\emptyset. $
\end{Definition}

\begin{Definition}
Fix $P \in \MV$ with right Lusztig data ${\bf a}$ and left Lusztig data ${\bf \oa}$. 
\begin{enumerate}
\item $\wt(P)= \mu^0-\mu_0$. 

\item $\varphi_0(P) = a^1$, $\varphi_1(P) = \oa^1$.

\item $\eps_0(P) = \varphi_0(P)- (\wt(P), \alpha_0)$ and $\eps_1(P) = \varphi_1(P)- (\wt(P), \alpha_1).$ 

\end{enumerate}
\end{Definition}

\begin{Definition}
The involution $*$ on $\MV$ negates the polytope. Recalling that elements of $\MV$ are only defined up to translation, this can be described algebraically as follows: for Lusztig data
$${\bf a}= (a_1, a_2, \ldots, {\bf \lambda}, \ldots, a^2, a^1),$$
$P^*_{\bf a}$ has left Lusztig data $\mathbf b=(\ob_k, \nu_k, \ob^k)$, where $\ob_k = a^k,\ \nu_k=\lambda_k,\ \ob^k = a_k.$

Define operators $e_0^*, e_1^*, f_0^*, f_1^*$ on $B(-\infty)$ by
$e^*_0 = * \circ e_0 \circ *, \;\; e^*_1 = * \circ e_1 \circ *, \;\; f^*_0 = * \circ f_0 \circ *, \;\; f^*_1 = * \circ f_1 \circ *,$ and similarly 
$\eps^*_0= \eps_0 \circ *, \;\; \eps^*_1=  \eps_1 \circ *, \;\; \varphi^*_0= \varphi_0 \circ *, \;\; \varphi^*_1= \varphi_1 \circ *.$
\end{Definition}

\begin{Remark}
The $*$ operators can also be defined as in Definition \ref{def:crystal-ops}, but where $\oa_1$ and $a_1$ get modified instead of $a^1$ and $\oa^1$. 
\end{Remark}

\begin{Theorem} \label{th:is-crystal}
$(\MV, e_0, e_1, f_0, f_1, \eps, \varphi, \wt)$ is isomorphic to the crystal $B(-\infty)$. Furthermore, 
$*$ is Kashiwara's involution. 
\end{Theorem}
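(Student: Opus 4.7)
The approach is to verify the hypotheses of Corollary~\ref{cor:comb-characterizaton2}. First, I would check that $(\MV, e_0, e_1, f_0, f_1, \varepsilon, \varphi, \wt)$ is a lowest weight combinatorial crystal with lowest weight element given by the trivial polytope. Well-definedness of $e_i$ and $f_i$ follows from Theorem~\ref{th:Lusztig-data}: every nonnegative integer Lusztig datum corresponds to a unique integral MV polytope, so decreasing $a^1$ (for $f_0$) or $\oa^1$ (for $f_1$, using that left Lusztig data also parametrize MV polytopes by horizontal reflection, Remark~\ref{rem:rotate}) yields another MV polytope whenever the result is still nonnegative. The crystal axioms follow directly from the definitions, and the condition $\varphi_i(P) = \max\{n : f_i^n P \neq \emptyset\}$ holds because $f_0^n$ is defined exactly when $a^1 \geq n$ and similarly for $f_1$. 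That every nontrivial $P$ can be reduced by successive $f_i$ to the trivial polytope uses Proposition~\ref{pr:a1ora1} (which guarantees that any nontrivial $P$ admits some $f_i$) together with the fact that each $f_i$ strictly decreases the height $(\wt(P), \omega_0 + \omega_1)$.

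Next, $*$ is an involution on $\MV$ because $\MV$ is closed under negation (Remark~\ref{rem:rotate}). As noted in the remark following the definition of $*$, the conjugate operators $f_i^*:=*f_i*$ act by decreasing an edge on the opposite side of the polytope: $f_0^*$ decreases $\oa_1$ in the left Lusztig datum, and $f_1^*$ decreases $a_1$ in the right Lusztig datum.

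Then I would verify the three conditions of Corollary~\ref{cor:comb-characterizaton2}. Condition~(i) is essentially immediate: for $i=0,j=1$, both $f_1$ and $f_0^*$ act on disjoint entries ($\oa^1$ and $\oa_1$) of the left Lusztig data, so they commute; by symmetry, $f_0$ and $f_1^*$ commute on the right Lusztig data. Conditions~(ii) and~(iii) involve only a single index, so by horizontal reflection it suffices to treat $i=0$. For $x = f_0^{\varphi_0(P)}(P)$ one computes $\varepsilon_0^*(x) = \oa_1(x) - (\wt(x),\alpha_0)$, which is to be compared with $\varphi_0(P) = a^1$. The geometric intuition is: when an appropriate lowest $\alpha_0$-active diagonal of $P$ separates $\oa_1$ (below the diagonal) from $a^1$ (above it), the two modifications act on disjoint halves of $P$ and hence commute, yielding case~(iii); when no such separating diagonal exists, the MV constraints force the two modifications to produce the same polytope, giving $f_0(P) = f_0^*(P)$ as in case~(ii).

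The main obstacle is making this dichotomy rigorous. I would exploit the case-by-case construction from the proof of Theorem~\ref{th:Lusztig-data}, which builds $P_{\mathbf a}$ by gluing smaller polytopes along active diagonals, and show that cases (ii) and (iii) correspond exactly to the two possibilities for the lowest $\alpha_0$-active diagonal of $P$. An alternative is an explicit algebraic verification using the formula for $\oa_1$ in Remark~\ref{rem:G=phi}: tracking which linear piece attains the maximum, and whether it involves $a^1$, directly determines whether modifying $a^1$ forces a corresponding change in $\oa_1$ (case ii) or leaves $\oa_1$ invariant (case iii). Once conditions (i)--(iii) are verified, Corollary~\ref{cor:comb-characterizaton2} gives $\MV \simeq B(-\infty)$, and $*$ is Kashiwara's involution by the uniqueness statement recorded after Theorem~\ref{th:comb-characterizaton}.
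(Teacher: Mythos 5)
Your overall strategy is the paper's: establish that $\MV$ is a lowest weight combinatorial crystal, then verify the three conditions of Corollary~\ref{cor:comb-characterizaton2}, with condition (i) handled by the observation that $f_1$ and $f_0^*$ touch disjoint entries of the left Lusztig data. However, your dichotomy for conditions (ii) and (iii) has the two types of diagonals interchanged, and this is not a cosmetic slip. Since $f_0$ modifies $a^1$ and $f_0^*$ modifies $\oa_1$ --- both edges parallel to $\alpha_0$ --- the diagonal that cleanly decouples the two operations must be an active \emph{$\alpha_1$}-diagonal: cutting along an active $\alpha_1$-diagonal $(\mu_k,\omu_{k-1})$, the diagonal is the $a_1$-edge of the top piece and the $\oa^1$-edge of the bottom piece, so it is untouched by $f_0$ (which changes only $a^1$ of the top piece) and by $f_0^*$ (which changes only $\oa_1$ of the bottom piece). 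An active $\alpha_0$-diagonal does \emph{not} do this: it becomes the $a^1$-edge of the bottom piece and the $\oa_1$-edge of the top piece, i.e.\ exactly the edges that $f_0^*$ and $f_0$ may perturb, so the two halves do not stay glued. Concretely, the line segment $P_{(0,\dots,0,a^1)}$ has every $\alpha_0$-diagonal active and no $\alpha_1$-diagonal active, and it satisfies $f_0(P)=f_0^*(P)$ (your case (ii)), whereas your criterion would place it in case (iii). The correct statement is: an active $\alpha_1$-diagonal exists $\Rightarrow$ $f_0^kf_0^*=f_0^*f_0^k$; no active $\alpha_1$-diagonal (equivalently, all $\alpha_0$-diagonals active) $\Rightarrow$ $f_0=f_0^*$.

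Beyond this, two substantial steps remain as plans rather than proofs. First, the separation argument needs the fact that $f_0$ leaves the polytope unchanged below (and including) the highest active $\alpha_1$-diagonal and translates everything above it by $-\alpha_0$ in a controlled way; this is the content of the paper's explicit description of the $f_0$-algorithm (Proposition~\ref{prop:explicit-e}), which your sketch does not supply and which cannot be read off directly from Definition~\ref{def:crystal-ops}. Second, you still must match the geometric dichotomy to the numerical hypothesis $\eps_0^*\bigl((f_0)^{\varphi_0(P)}(P)\bigr)\gtrless\varphi_0(P)$ of Corollary~\ref{cor:comb-characterizaton2}. The paper does this via the identity
$$\varphi_0(P)+\varphi_0^*(P)-(\alpha_0,\wt(P))=\sum_{k\ge1}\tfrac{1}{k(k+1)}\bigl((\omega_1,\mu_k-\omu_{k+1})+(\omega_1,\mu^{k+1}-\omu^k)\bigr)\ge0,$$
with equality iff all $\alpha_0$-diagonals are active, combined with the monotonicity $\varphi_0^*(f_0^kP)\le\varphi_0^*(P)$ (strict after one step when no $\alpha_1$-diagonal is active). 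Your proposed alternative via Remark~\ref{rem:G=phi} could in principle substitute for this, but note that the relevant quantity is $\oa_1$ of $f_0^{\max}(P)$, not of $P$, so you would still need to control how $\oa_1$ evolves under repeated application of $f_0$.
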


The proof of Theorem \ref{th:is-crystal} will be delayed until Section \ref{sec:proofs2}.

\begin{Corollary} \label{cor:Bl}
Fix a dominant integral weight $\Lambda = c_0 \Lambda_0 + c_1 \Lambda_1$ for $\asl_2$, and let $\MV^\Lambda$ to be the subset of $\MV$ consisting of those MV polytopes $P_{\mathbf a}$ such that $a_1\leq c_1$ and $\oa_1\leq c_0$. Let $e_i^\Lambda, f_i^\Lambda$ be the operators inherited from the operators on $\MV$ by setting all $P \not \in \MV^\Lambda$ equal to $\emptyset$.
Let $\wt^\Lambda=\wt-\Lambda$, $\eps^\Lambda_i=\eps_i + c_i$ and $\varphi^\Lambda_i = \varphi_i$. 
 Then $(\MV^\Lambda, e_i^\Lambda, f_i^\Lambda, \varphi^\Lambda_i, \eps^\Lambda_i, \wt^\Lambda)$ is isomorphic to $B(-\Lambda)$. 
\end{Corollary}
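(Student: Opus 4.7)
The plan is to deduce Corollary~\ref{cor:Bl} directly from Theorem~\ref{th:is-crystal} together with Theorem~\ref{th:sub-crystal}. By Theorem~\ref{th:is-crystal} we already have $\MV \simeq B(-\infty)$ with $*$ implementing Kashiwara's involution, so the task reduces to identifying $\MV^\Lambda$ with the subcrystal $B^\Lambda$ of Theorem~\ref{th:sub-crystal} and checking that the inherited crystal data match.

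First I would translate the defining inequality $\varphi_i^*(b)\leq c_i$ of $B^\Lambda$ into a statement about Lusztig data. Since $*$ is negation of polytopes, the definition of $*$ on Lusztig data gives that if $P$ has right Lusztig data $(a_k,\lambda_k,a^k)$ and left Lusztig data $(\oa_k,\olambda_k,\oa^k)$, then $P^*$ has right Lusztig data $(\oa^k,\olambda_k,\oa_k)$ and left Lusztig data $(a^k,\lambda_k,a_k)$ (the reversals follow from the symmetry argument that also gave the stated formula for the left data of $P^*$). Combining this with the definitions $\varphi_0(P)=a^1$ and $\varphi_1(P)=\oa^1$, one obtains
\begin{equation*}
\varphi_0^*(P)=\varphi_0(P^*)=\oa_1,\qquad \varphi_1^*(P)=\varphi_1(P^*)=a_1.
\end{equation*}
Hence the conditions $\varphi_0^*(P)\leq c_0$ and $\varphi_1^*(P)\leq c_1$ become exactly $\oa_1\leq c_0$ and $a_1\leq c_1$, which is the defining condition of $\MV^\Lambda$. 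Thus $\MV^\Lambda$ is precisely the image of $B^\Lambda$ under the crystal isomorphism of Theorem~\ref{th:is-crystal}.

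Next I would verify that the restricted operators and statistics match those prescribed in Theorem~\ref{th:sub-crystal}. The operators $f_i^\Lambda$ defined in Corollary~\ref{cor:Bl} send $P$ outside $\MV^\Lambda$ to $\emptyset$, which agrees with restricting $f_i$ to $B^\Lambda$ (note however that $f_i$ itself never leaves $B^\Lambda$, since $f_i$ does not increase $\varphi_i^*$; this should be checked from Definition~\ref{def:crystal-ops} as an easy consistency check). The operator $e_i^\Lambda$ is defined to return $\emptyset$ whenever $e_i(P)\notin\MV^\Lambda$, matching the prescription in Theorem~\ref{th:sub-crystal}. The weight, $\varphi$ and $\varepsilon$ functions are shifted by $-\Lambda$, $0$, and $+c_i$ respectively, exactly as in Theorem~\ref{th:sub-crystal}.

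The only real content is the identification of $\varphi_i^*$ with the lengths of the bottom edges; once that is in place, Theorem~\ref{th:sub-crystal} delivers the isomorphism $\MV^\Lambda\simeq B(-\Lambda)$ essentially for free. I do not anticipate any serious obstacle: the calculation of $\varphi_i^*$ is a direct unravelling of the definition of $*$, and all remaining verifications are bookkeeping.
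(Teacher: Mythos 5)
Your proposal is correct and takes essentially the same route as the paper, which simply cites Theorem~\ref{th:sub-crystal} and Theorem~\ref{th:is-crystal}; the identification $\varphi_0^*(P)=\oa_1$, $\varphi_1^*(P)=a_1$ that you spell out is exactly the content making the deduction "immediate." (The fact that $f_i$ never leaves $B^\Lambda$ is already part of the statement of Theorem~\ref{th:sub-crystal}, so your consistency check is not an additional burden.)
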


\begin{proof}
This is immediate from Theorem \ref{th:sub-crystal} and Theorem \ref{th:is-crystal}.
\end{proof}

Let $ P $ be an MV polytope.  Recall that MV polytopes are only defined up to translation by weights.  For a dominant weight $ \Lambda $, let $ P^\Lambda $ denote the representative for $ P $ whose lowest vertex $ \mu_0 $ equals $ -\Lambda $.

\begin{Theorem} \label{th:contained}
For any dominant weight $ \Lambda $, we have $$ \MV^\Lambda = \{ P \in \MV : P^\Lambda \subset \mathrm{Conv} W (-\Lambda)\}, $$
where $ W $ denotes the Weyl group of $ \asl_2$ and $ \mathrm{Conv} $ denotes convex hull.  
\end{Theorem}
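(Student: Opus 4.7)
For the forward direction, the edge of $\mathrm{Conv}\, W(-\Lambda)$ from $-\Lambda$ to $s_1(-\Lambda) = -\Lambda + c_1\alpha_1$ is a facet in the $\alpha_1$-direction, so the vertex $\mu_1 = -\Lambda + a_1 \alpha_1$ of $P^\Lambda$ lies in the hull only if $a_1 \leq c_1$. The symmetric argument on the opposite side gives $\oa_1 \leq c_0$.

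For the reverse direction, assume $P \in \MV^\Lambda$. My plan is to show that every extreme vertex of $P^\Lambda$ is a weight of the integrable lowest weight module $V(-\Lambda)$. Since the set of weights of $V(-\Lambda)$ is contained in $\mathrm{Conv}\, W(-\Lambda)$ (a standard fact about integrable modules for Kac-Moody algebras; see \cite{Kac:1990}) and $P^\Lambda$ is the convex hull of its extreme vertices, this would suffice.

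To realize a given extreme vertex $v$ of $P^\Lambda$ as a weight of $V(-\Lambda)$, I construct an MV polytope $Q \in \MV^\Lambda$ with $\wt(Q) = v + \Lambda$, so that by Corollary~\ref{cor:Bl}, $v = \wt^\Lambda(Q)$ appears as the weight of the element of $B(-\Lambda) \simeq \MV^\Lambda$ represented by $Q$. The $Q$ is obtained by truncating the Lusztig data of $P$: for $v = \mu_j$ (a lower-right vertex, including $\mu_\infty$), take $Q = Q_j$ whose right Lusztig data is $(a_1, \ldots, a_j, 0, 0, \ldots)$ with all other entries set to zero; for $v = \mu^j$ (an upper-right vertex, including $\mu^\infty$), take $Q = Q^j$ with the same Lusztig data as $P$ except that $a^1, \ldots, a^j$ are replaced by zero; the left-side vertices $\omu_k, \omu^k$ are handled analogously, using horizontal reflection (Remark~\ref{rem:rotate}). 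In each case, the identity $\wt(Q) = v + \Lambda$ is immediate from the definition of $\wt$ applied to the truncated data.

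It remains to verify $Q \in \MV^\Lambda$, i.e., $q_1 \leq c_1$ and $\overline{q}_1 \leq c_0$. The first is automatic since $q_1 \in \{a_1, 0\}$. For the second, I would use the explicit formula for $\oa_1$ from Remark~\ref{rem:G=phi}, which expresses it as the maximum of three families of linear expressions in the right Lusztig data, and show term-by-term that each summand of $\overline{q}_1$ is bounded by some summand of $\oa_1$. For the lower truncation $Q_j$, for instance, the Family~1 terms for indices $k \leq j$ coincide with those for $P$; the term for $k = j+1$ differs from $P$'s only by subtraction of $j \cdot a_{j+1} \geq 0$; and the terms for $k \geq j+2$ collapse to $-2 S_j \leq 0$ where $S_j = \sum_{i \leq j} a_i$, while the Family~2 and Family~3 terms become non-positive because of the zero entries. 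The upper truncation $Q^j$ is symmetric. The main obstacle is simply the bookkeeping of this case-by-case comparison through the three families of terms in Remark~\ref{rem:G=phi}, but no new ideas beyond monotonicity of each summand under zeroing of Lusztig data are required.
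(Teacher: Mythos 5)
Your proof is correct, but it takes a genuinely different route from the paper's. The paper proves the containment $P^\Lambda \subset \mathrm{Conv}\,W(-\Lambda)$ by a direct geometric induction: it compares each lower vertex $\mu_k$, $\omu_k$ with the partial orbit points $v_k = s_1s_0\cdots s_i(-\Lambda)$ and $\overline v_k$, showing $v_k - \mu_k \in \mathrm{span}_{\br_{\ge 0}}(\alpha_1, \alpha_1+(k-1)\delta)$ using only the defining inequalities of MV polytopes, and then handles the rest of the polytope via a separate claim that the hull is stable under translation by $t\delta$, $t\ge 0$ (citing Kac (6.5.2)). You instead argue representation-theoretically: every vertex of $P^\Lambda$ is realized as $\wt^\Lambda(Q)$ for a truncated polytope $Q \in \MV^\Lambda$, hence is a weight of $V(-\Lambda)$ by Corollary~\ref{cor:Bl}, hence lies in $\mathrm{Conv}\,W(-\Lambda)$ by the standard convexity of weight diagrams of integrable modules. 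I checked your truncation bookkeeping against Remark~\ref{rem:G=phi} and it goes through: for $Q_j$ the family-1 terms with $k\le j+1$ are dominated by the corresponding terms for $P$ and all remaining terms are $\le 0 \le \oa_1$, and for $Q^j$ only the family-3 terms change and they only decrease; since you handle the upper vertices $\mu^j$, $\omu^j$ directly, you do not need the $\delta$-stability claim at all. What each approach buys: the paper's argument is elementary and logically independent of the crystal structure, whereas yours is conceptually cleaner (it explains the theorem as ``vertices are weights of $V(-\Lambda)$'') but leans on two heavier inputs --- Theorem~\ref{th:is-crystal} (proved later in the paper, though with no circularity, as its proof nowhere uses Theorem~\ref{th:contained}) and the explicit formula of Remark~\ref{rem:G=phi}, which the paper itself only asserts can be checked by following the algorithm.

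Two small points to tighten. First, your forward direction is stated loosely (``is a facet in the $\alpha_1$-direction''); the clean justification is that $\mathrm{Conv}\,W(-\Lambda)$ is $W$-stable and contained in $-\Lambda + \br_{\ge 0}\alpha_0 + \br_{\ge 0}\alpha_1$, so $s_1\mu_1 = -\Lambda + (c_1-a_1)\alpha_1$ forces $a_1 \le c_1$. Second, you should make explicit that the containment of weights of $V(-\Lambda)$ in $\mathrm{Conv}\,W(-\Lambda)$ is being invoked for an \emph{affine} algebra, where the hull is an infinite (non-closed) convex set; the statement is still standard for dominant integral $\Lambda$, but it deserves a precise citation rather than a generic reference to \cite{Kac:1990}.
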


\begin{proof}
Write $ \Lambda = c_0 \Lambda_0 + c_1 \Lambda_1 $.

If $ P^\Lambda \subset \mathrm{Conv} W (-\Lambda) $, then the containment of the vertices $ \mu_1 $ and $ \omu_1 $ forces $ a_1 \le c_1 $ and $ \oa_1 \le c_0 $. 

We now turn to the converse implication: we pick $P \in \MV^\Lambda$,
and we show that $P^\Lambda $ is contained in the convex hull of
$W (- \Lambda)$.
Discarding a trivial case, we assume that $\Lambda\neq0$. We need the following, which is well known, but we will provide a proof for completeness. 

{\bf Claim:} The convex hull of $W(-\Lambda)$ is stable
by translation by $t\delta$ for any $t\geq0$.

{\bf Proof of claim:} 
Let $\alpha$ in the classical root lattice and let
$t_{\pm\alpha}\in W$ be the translation by $\pm\alpha$.
Any element $\lambda\in W(-\Lambda)$ has the same level as $-\Lambda$,
namely $-(c_0+c_1)$. Using \cite[(6.5.2)]{Kac:1990}, we see that
the midpoint of $t_\alpha\lambda$ and $t_{-\alpha}\lambda$ is
$\lambda+\frac{c_0+c_1}2|\alpha|^2\delta$. Choosing $\alpha$ large
enough, we deduce that $\lambda+t\delta$ belongs to the convex hull
of $W\lambda$ for any $t\geq0$. The claim follows. 

It is now clear that, to check that $ P^\Lambda \subset \mathrm{Conv} W (-\Lambda) $, it suffices to check that all vertices $\mu_k$ and $\omu_k$
of $ P^\Lambda $ belong to this convex hull. Set
$$v_k= s_1 s_0 s_1 \cdots s_i (-\Lambda) \quad \text{and} \quad \overline{v}_k= s_0 s_1 s_0 \cdots s_i (-\Lambda),$$
where in each case there are exactly $k$ reflections. It is actually enough to show that, for each~$k$,  
$$v_k-\mu_k \in \mathrm{span}_{\br_{\ge 0}} (\alpha_1, \alpha_1+(k-1) \delta ) \quad \text{and} \quad
\overline{v}_k-\omu_k \in \mathrm{span}_{\br_{\ge 0}} (\alpha_0, \alpha_0+(k-1) \delta).
$$
We do this by induction on $k$, as illustrated in Figure \ref{fig:convhull} and explained below

\begin{figure}[ht]
\begin{center}
\begin{tikzpicture}[xscale=0.8, yscale=0.2]
\coordinate (v0) at(0,0);
\coordinate (v1) at(1,1);
\coordinate (v2) at(2,4);
\coordinate (v3) at(3,9);
\coordinate (v4) at(4,16);
\coordinate (v5) at(5,25);
\coordinate (ov1) at(-1,1);
\coordinate (ov2) at(-2,4);
\coordinate (ov3) at(-3,9);
\coordinate (ov4) at(-4,16);
\coordinate (ov5) at(-5,25);
\coordinate (w5) at(3.5,20.5);
\coordinate (mu4) at(2.5,11.5);
\coordinate (mu5) at(3.1,16.9);
\coordinate (omu4) at(-3.2,13.8);

\draw (v0) node{$\bullet$};
\draw (v1) node{$\bullet$};
\draw (v2) node{$\bullet$};
\draw (v3) node{$\bullet$};
\draw (v4) node{$\bullet$};
\draw (v5) node{$\bullet$};
\draw (ov1) node{$\bullet$};
\draw (ov2) node{$\bullet$};
\draw (ov3) node{$\bullet$};
\draw (ov4) node{$\bullet$};
\draw (ov5) node{$\bullet$};
\draw (w5) node{$\bullet$};
\draw (mu4) node{$\bullet$};
\draw (mu5) node{$\bullet$};
\draw (omu4) node{$\bullet$};

\draw[line width = 0.5mm] (-5.3,28.3)--(ov5)--(ov4)--(ov3)--(ov2)--(ov1)
  --(v0)--(v1)--(v2)--(v3)--(v4)--(v5)--(5.3,28.3);
\draw[line width = 0.2mm] (v5)--(ov4);
\draw[line width = 0.2mm] (v4)--(1,13);
\draw[line width = 0.2mm] (v4)--(3,7);
\draw[line width = 0.2mm] (-1,13)--(ov4)--(-4.7,9.7);
\draw[line width = 0.2mm] (omu4)--(w5)--(mu4);

\draw (v0) node[below]{$v_0$};
\draw (v1) node[below]{$v_1$};
\draw (ov1) node[below]{$\overline v_1$};
\draw (v5) node[right]{$v_k$};
\draw (v4) node[right]{$v_{k-1}$};
\draw (ov4) node[left]{$\overline v_{k-1}$};
\draw (w5) node[above right]{$w_k$};
\draw (omu4) node[below right]{$\omu_{k-1}$};
\draw (mu4) node[left]{$\mu_{k-1}$};
\draw (mu5) node[left]{$\mu_k$};

\end{tikzpicture}
\end{center}
\caption{Diagram accompanying the proof of Theorem \ref{th:contained}.}
\label{fig:convhull}
\end{figure}
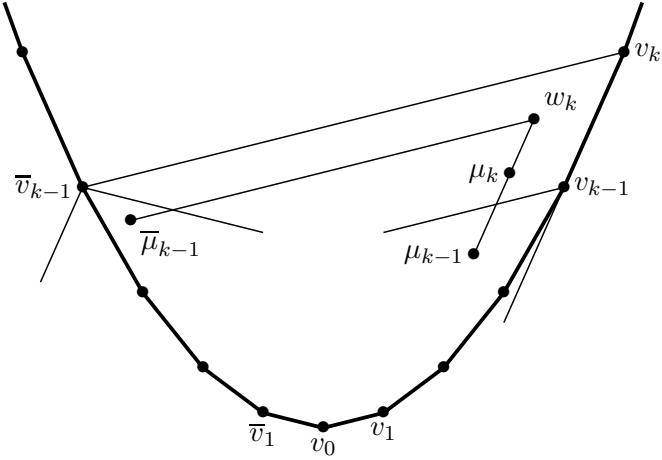

Let $w_k$ be the intersection of the line through $\omu_{k-1}$ parallel to $\alpha_1$ and the line through $\mu_{k-1}$ parallel to $\alpha_1+(k-1) \delta$. By the definition of MV polytope $w_k-\mu_k = t  (\alpha_1 + (k-1) \delta) $ for some $t \geq 0$ (where $t=0$ if and only if the diagonal $(\omu_{k-1}, \mu_k)$ is active). 

By induction, we can write 
$$v_{k-1}- \mu_{k-1} = x \alpha_1 + y (\alpha_1+ (k-1) \delta) \quad \text{and} \quad
\overline v_{k-1}- \omu_{k-1} = \overline x (-\alpha_1) + \overline y (\alpha_1+ (k-1) \delta)$$
for $x,y, \overline x,\overline y\geq 0$
(since the positive cone spanned by $\{  \alpha_1,  \alpha_1+ (k-1) \delta \}$ contains that spanned by $\{ \alpha_1, \alpha_1+ (k-2) \delta \}$, and the positive cone spanned by $\{-\alpha_1, \alpha_1+ (k-1) \delta\}$ contains that spanned by $\{ \alpha_0, \alpha_0+ (k-2) \delta \}$). 
It follows that
 $$v_k- w_k= x \alpha_1 + \overline y (\alpha_1 + (k-1) \delta),$$
 and hence 
 $$v_k- \mu_k= x \alpha_1 + \overline y (\alpha_1 + (k-1) \delta)+ t(\alpha_1 + (k-1) \delta).$$
Notice that $x$ and $\overline y + t$ are both positive. A similar argument shows that $\overline{v}_k-\omu_k \in \text{span}_{\br_{\ge 0}} (\alpha_0, \alpha_0+(k-1) \delta)$, completing the induction. 
\end{proof}

\subsection{Proof of Theorem \ref{th:is-crystal}}  \label{sec:proofs2}

Before beginning, let us give an outline of the proof. We first show that $\MV$ is a lowest weight combinatorial crystal. Hence 
by Corollary \ref{cor:comb-characterizaton2}, 
it remains to verify that,
for all $P \in \MV$, $i,j= 0$ or $1$, and $k \geq 0$,
\begin{align}
& \label{abc1} \text{ If $i \neq j$ then, $f_i^*f_j(P)= f_jf_i^*(P)$,} \\
&\label{abc2} \text{ If $\eps_i^*( (f_i)^{\varphi_i(P)}(P)) < \varphi_i( P)$ then $f_i(P) = f_i^*(P)$, and }\\
& \label{abc3} \text{ If $\eps_i^*( (f_i)^{\varphi_i(P)}(P)) \geq \varphi_i( P)$ then  $f_i^k f_i^*(P) = f_i^*f_i^k(P)$ for all $k \geq 0$.}
\end{align}
Condition \eqref{abc1} is straightforward. To handle \eqref{abc2} and \eqref{abc3} we first give a very explicit description of the operators $f_i$. This allows us to show that if $P \in \MV$ has an active $\alpha_1$ diagonal, then $f_0^* f_0^k(P) = f_0^k f_0^*(P)$, and otherwise $f_0(P)= f_0^*(P)$. It then remains to show that the condition $\eps_0^*( (f_0)^{\varphi_0(P)}(P)) \geq \varphi_0( P)$ is equivalent to the condition that $P$ has an active $\alpha_1$ diagonal, and similarly with the roles of $0$ and $1$ reversed.

\begin{Lemma} \label{lem:is-comb}
$(\MV, e_i, f_i, \varepsilon, \varphi, \wt)$ is a lowest weight combinatorial crystal.
\end{Lemma}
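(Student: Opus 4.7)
The plan is to verify each axiom of a lowest weight combinatorial crystal directly from the definitions. Most steps will be routine bookkeeping, with the only substantive input being Proposition~\ref{pr:a1ora1}. Throughout I will use the right Lusztig data $\mathbf a$ to analyze $e_0,f_0,\varphi_0,\eps_0$ and the left Lusztig data $\overline{\mathbf a}$ to analyze $e_1,f_1,\varphi_1,\eps_1$; by Theorem~\ref{th:Lusztig-data} together with the horizontal-reflection symmetry in Remark~\ref{rem:rotate}, each of these datasets uniquely determines the polytope, so modifying a single entry in one of them gives a well-defined element of $\MV$ (and clearly remains a valid Lusztig datum, since only the zero-tail condition and the partition ordering on $\lambda$ need to be preserved, neither of which involves $a^1$ or $\oa^1$).

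First I would check axioms (i)--(iv) of combinatorial crystal. Axiom~(i), $\varphi_i = \eps_i + \langle\wt,\alpha_i^\vee\rangle$, will be a direct rearrangement of the definition $\eps_i(P) = \varphi_i(P) - (\wt(P),\alpha_i)$, once one identifies $\alpha_i^\vee$ with $\alpha_i$ under the symmetric form. For axiom~(ii) I would compute: $e_0$ increments $a^1$ by $1$, which by the weight formula for $\wt(\mathbf a)$ increases $\wt$ by $\alpha_0$ and by definition increases $\varphi_0$ by $1$; combined with axiom~(i) and $(\alpha_0,\alpha_0)=2$, this forces $\eps_0$ to decrease by $1$. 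The case of $e_1$ is analogous via the left Lusztig datum. Axiom~(iii) ($f_i b=b'$ iff $e_i b'=b$) is built into the definition, since $e_i$ and $f_i$ are mutually inverse increments on a single entry, and we declare $f_i$ to give $\emptyset$ exactly when decrementing would produce a negative entry. Axiom~(iv) is vacuous, because $\varphi_i$ here is always a nonnegative integer.

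Next I would identify the lowest weight element $b_-$ as the trivial (single-point) polytope, with all Lusztig data zero. The condition $\varphi_i(P) = \max\{n : f_i^n(P)\neq\emptyset\}$ will then be immediate: since $f_0^n$ just replaces $a^1$ by $a^1-n$ and returns $\emptyset$ exactly when this becomes negative, the maximum is $a^1 = \varphi_0(P)$, and the $i=1$ case is the mirror statement.

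Finally, to show that $b_-$ is reachable from any $P\in\MV$ by a sequence of $f_i$'s, I would induct on the height $(\wt(P),\omega_0+\omega_1)$, a nonnegative integer that vanishes precisely at $b_-$. If $P$ is nontrivial then by Proposition~\ref{pr:a1ora1} either $a^1>0$ or $\oa^1>0$, so at least one of $f_0(P),f_1(P)$ is defined; either reduces the height by $1$, so the induction closes. This appeal to Proposition~\ref{pr:a1ora1} is the only step carrying any real content, and since that proposition is already established, no serious obstacle is expected.
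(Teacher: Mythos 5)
Your proof is correct and follows essentially the same route as the paper: the crystal axioms and the condition $\varphi_i(P)=\max\{n: f_i^n(P)\neq\emptyset\}$ are routine checks from the definitions of the operators on Lusztig data, and reachability of the trivial polytope is proved by induction on the height using Proposition~\ref{pr:a1ora1}. The only difference is that you spell out the routine verifications that the paper compresses into ``one readily checks.''
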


\begin{proof}
One readily checks that $(\MV, e_i, f_i, \varepsilon, \varphi, \wt)$ is a combinatorial crystal that enjoys condition (ii) in Definition~\ref{def:lwcrystal}. We prove condition (i) by induction on the weight. The lowest weight element is the trivial polytope (i.e., a single point, up to translation). Let $P$ be a non-trivial integral MV polytope. By Proposition \ref{pr:a1ora1}, either $a^1 \neq 0$ or $\oa^1 \neq 0$. Then by definition $f_i(P) \neq \emptyset$ for $i=0$ or $i=1$. Since $f_i(P)$ has smaller weight than $P$, by our induction hypothesis, the trivial polytope can be reached by applying a sequence of lowering operators $f_j$ to $f_i(P)$. Adding $f_i$ to this sequence, we get the desired property for $P$.
 \end{proof}

\begin{Remark}
\label{Rem:determbydiag}
Fix a Lusztig datum ${\bf a}$, a complete system of diagonals $S$, and a length $\ell(d)$ for each diagonal $d \in S$. It is clear that there can be at most one decorated GGMS polytope $P$ with right Lusztig data ${\bf a}$, and such that
\begin{itemize}
\item $P$ satisfies Definition \ref{def:characterization} parts \eqref{part:middle1} and \eqref{part:middle2}.

\item For each diagonal $d \in S$, the corresponding inequality from Definition \ref{def:characterization} part \eqref{top-part} or \eqref{bottom-part} holds with equality (so e.g. if $d= (\omu^k, \mu^{k-1}),$ then $(\omu^{k} -\mu^{k-1}, \omega_0)=0$), and has length $\ell(d)$.
\end{itemize}
In addition, for a fixed complete system of active diagonals $S$,
the restriction of the map $\mathbf a\to\overline{\mathbf a}$ to the set
$\{\mathbf a:\text{the elements of }S\text{ are active in }P_{\mathbf a}\}$
is linear.
\end{Remark}

The action of the lowering operator $f_0$ on $\MV$ can be concretely
described, as shown in the following proposition (see also Figure
\ref{fig:explicit-f}).

\begin{Proposition} \label{prop:explicit-e}
Let $\mathbf a$ be a Lusztig data. For $t\in[0,a^1]$, consider the
Lusztig data $\mathbf a_t=(a_1,a_2,\ldots,a^2,a^1-t)$, and denote by
$P_t$ the polytope with right Lusztig data $\mathbf a_t$.
These polytopes $P_t$ can be determined step by step, starting from $P_0$,
as follows.

Let $0\leq t<a^1$. Let $d$ be the highest active $\alpha_1$ diagonal in
$P_t$. Let $S$ be a complete system of active diagonals which contains
$d$ and whose elements are active in $P_t$. Then, for $\tau>t$ close enough
to $t$, $P_{\tau}$ can be determined by the following conditions:
\begin{itemize}
\item
The diagonals in $S$ are active in $P_{\tau}$. 
\item
Below (and including) $d$, the polytopes $P_t$ and $P_{\tau}$ are identical.
\item
Strictly above $d$, the length of a diagonal in $P_{\tau}$ has the form
$c+t-\tau$, where $c$ is the length of this diagonal in $P_t$.
\end{itemize}

For a fixed $t$, this recipe determines $P_\tau$ for all
$\tau>t$ up to a certain value $t'$. At $t'$, a new $\alpha_1$ diagonal
$d'$ has appeared, above $d$, ready to play the role of $d$
for values $\tau>t'$. At the end of the process, the uppermost $\alpha_1$
diagonal $(\omu^2,\mu^1)$ is active and the edge $(\omu^1,\mu^0)$
coincides with this diagonal.

\end{Proposition}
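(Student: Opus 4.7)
The plan is to prove the recipe by continuously tracking $P_\tau$ as $\tau$ increases from $t$, using Remark \ref{Rem:determbydiag} and Remark \ref{rem:cut-into-quads} as the main engines. Fix $t$, let $d$ be the highest active $\alpha_1$-diagonal of $P_t$, and let $S$ be a complete system of active diagonals in $P_t$ containing $d$. By Remark \ref{rem:cut-into-quads}, $P_t$ is the gluing of an MV polytope $P_t^{\mathrm{lo}}$ below $d$ and an MV polytope $P_t^{\mathrm{up}}$ above $d$. The right Lusztig data of $P_t^{\mathrm{lo}}$ is independent of $a^1$, so decreasing $a^1$ to $a^1-(\tau-t)$ affects only $P_t^{\mathrm{up}}$.

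For the upper piece, Remark \ref{Rem:determbydiag} provides the key input: on the stratum of Lusztig data where the above-$d$ elements of $S$ remain active, $P^{\mathrm{up}}$ depends linearly on its right Lusztig data, and dually on the system of diagonal lengths. To pin down the specific linear variation produced by decreasing $a^1$ by $\tau-t$, I would work by induction along the chain of active diagonals in $S$ strictly above $d$, applying the elementary polytope analyses of Lemmas \ref{le:a_ka_k+1}--\ref{le:basecases} between consecutive pairs. The outcome is exactly the uniform shrinkage by $\tau-t$ of every diagonal in $S$ strictly above $d$; the resulting candidate is a decorated GGMS polytope whose right Lusztig data is $\mathbf a_\tau$ (in particular $\lambda$ and $\overline\lambda$ are unchanged). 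This candidate satisfies Definition \ref{def:characterization} for $\tau$ close to $t$: conditions (i)--(ii) hold with equality on diagonals of $S$ by construction and with strict inequality on the remaining diagonals by continuity from $P_t$, while conditions (iii)--(iv) are inherited from $P_t^{\mathrm{up}}$.

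As $\tau$ grows, the recipe continues verbatim until either some previously strict inequality above $d$ degenerates to equality, producing a new active $\alpha_1$-diagonal $d'$ above $d$ which takes over the role of $d$ for subsequent values of $\tau$, or we reach $\tau = a^1$. At this endpoint we have $\mu^0 = \mu^1$; applying Definition \ref{def:characterization}(ii) at $k = 2$ and using $\mu^0 = \mu^1$ gives $(\omu^2 - \mu^1, \omega_0) = -\oa^2 \leq 0$ and $(\mu^2 - \omu^1, \omega_1) = \oa^1 - a^2 \geq 0$, so the non-negativity requirement forces $\oa^2 = 0$. Then $\omu^2 = \omu^1$ and the active $\alpha_1$-diagonal $(\omu^2, \mu^1)$ coincides with the edge $(\omu^1, \mu^0)$, as claimed.

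The main obstacle is the precise identification in the second paragraph: Remark \ref{Rem:determbydiag} only tells us that the variation of $P^{\mathrm{up}}$ is linear, and one must combinatorially verify that varying $a^1$ alone yields exactly the uniform $(\tau-t)$-shrinkage of the above-$d$ diagonals and no other change. This requires a case analysis according to the type ($\alpha_0$ or $\alpha_1$) and the location (lower part, straddling the imaginary middle, or upper part) of each above-$d$ active diagonal in $S$, with each case handled via the corresponding base-case lemma.
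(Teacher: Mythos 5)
Your proposal is correct and follows essentially the same route as the paper: build the candidate polytope prescribed by the recipe via Remark \ref{Rem:determbydiag}, check by continuity that the strict MV inequalities above $d$ persist for $\tau$ near $t$, verify the candidate has right Lusztig data $\mathbf a_\tau$, and conclude by the uniqueness in Theorem \ref{th:Lusztig-data}. The only (harmless) difference is one of direction — you compute the forward map $a^1\mapsto$ diagonal lengths region by region, while the paper posits the shrunken lengths and reads off the Lusztig data — and your explicit endpoint computation ($\oa^2=0$ when $\mu^0=\mu^1$) is a welcome addition the paper leaves implicit.
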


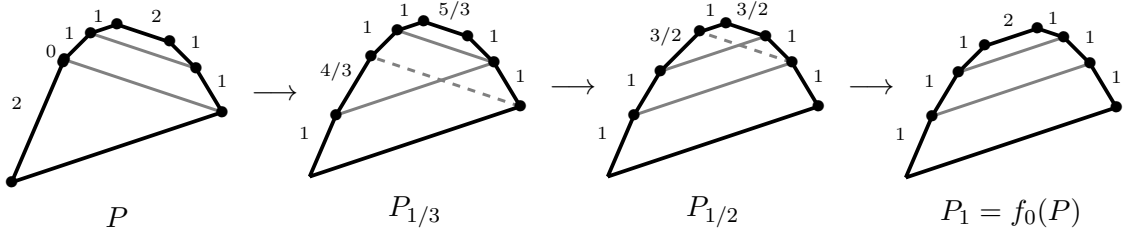
\begin{figure}[ht]

\begin{center}
\setlength{\unitlength}{0.5cm}
\begin{tikzpicture}[yscale=0.11666, xscale=0.35]

\draw [line width = 0.04cm, color=gray]
(4,-10)--(-2,-4)
(3,-5)--(-1,-1)
;

\draw 
(0,0) node {$\bullet$}
(2,-2) node {$\bullet$}
(3,-5) node {$\bullet$}
(4,-10) node {$\bullet$}
(-1,-1) node {$\bullet$}
(-2,-4) node {$\bullet$}
(-2.05,-4.3) node {$\bullet$}
(-4,-18) node {$\bullet$};

\draw [line width = 0.05cm] (0,0)--(2,-2)
(2,-2)--(3,-5)
(3,-5)--(4,-10)
(0,0)--(-1,-1)
(-1,-1)--(-2,-4)
(-2,-4)--(-4, -18)
(-4, -18)--(4,-10);

\draw
(1.5,1) node {\tiny $2$}
(3,-2) node {\tiny $1$}
(4,-6.5) node {\tiny $1$}

(-0.7,1) node {\tiny $1$}
(-1.8,-1) node {\tiny $1$}
(-2.5,-3) node {\tiny $0$}
(-3.8,-9) node {\tiny $2$}
;

\draw (6, -8) node {$\longrightarrow$};

\draw(0,-22) node {$P$};

\end{tikzpicture}
\hspace{-0.5cm}
\begin{tikzpicture}[yscale=0.11666, xscale=0.35]

\draw [line width = 0.04cm, color=gray]
(-3,-11)--(3,-5)
(3,-5)--(-0.6666,-1.3333)
;

\draw [line width = 0.04cm, color=gray, dashed]
(4, -10)--(-1.6666,-4.3333);

\draw 
(0.3333,-0.3333) node {$\bullet$}
(-0.6666,-1.33333) node {$\bullet$}
(-1.6666,-4.3333) node {$\bullet$}
(-3, -11) node {$\bullet$}
(2,-2) node {$\bullet$}
(3,-5) node {$\bullet$}
(4,-10) node {$\bullet$}
;

\draw [line width = 0.05cm] (0.3333,-0.3333)--(2,-2)
(2,-2)--(3,-5)
(3,-5)--(4,-10)
(0.3333,-0.3333)--(-0.6666,-1.33333)
(-0.6666,-1.3333)--(-1.6666,-4.3333)
(-1.6666,-4.3333)--(-3, -11)
(-3, -11)--(-4,-18)
(-4, -18)--(4,-10);

\draw
(1.5,1) node {\tiny $5/3$}
(3,-2) node {\tiny $1$}
(4,-6.5) node {\tiny $1$}

(-0.4,1) node {\tiny $1$}
(-1.8,-1) node {\tiny $1$}
(-3,-6) node {\tiny $4/3$}
(-4.2,-13) node {\tiny $1$}
;

\draw (6, -8) node {$\longrightarrow$};

\draw(0,-22) node {$P_{1/3}$};

\end{tikzpicture}
\hspace{-0.5cm}
\begin{tikzpicture}[yscale=0.11666, xscale=0.35]

\draw [line width = 0.04cm, color=gray]
(-3,-11)--(3,-5)
(-2,-6)--(2,-2)
;

\draw [line width = 0.04cm, color=gray, dashed]
(3, -5)--(-0.5,-1.5);

\draw 
(0.5,-0.5) node {$\bullet$}
(-0.5,-1.5) node {$\bullet$}
(-2,-6) node {$\bullet$}
(-3, -11) node {$\bullet$}
(2,-2) node {$\bullet$}
(3,-5) node {$\bullet$}
(4,-10) node {$\bullet$}
;

\draw [line width = 0.05cm] (0.5,-0.5)--(2,-2)
(2,-2)--(3,-5)
(3,-5)--(4,-10)
(0.5,-0.5)--(-0.5,-1.5)
(-0.5,-1.5)--(-2,-6)
(-2,-6)--(-3, -11)
(-3, -11)--(-4,-18)
(-4, -18)--(4,-10);

\draw
(1.5,1) node {\tiny $3/2$}
(3,-2) node {\tiny $1$}
(4,-6.5) node {\tiny $1$}

(-0.1,1) node {\tiny $1$}
(-1.8,-2) node {\tiny $3/2$}
(-3.1,-7) node {\tiny $1$}
(-4.2,-13) node {\tiny $1$}
;

\draw (6, -8) node {$\longrightarrow$};

\draw(0,-22) node {$P_{1/2}$};

\end{tikzpicture}
\hspace{-0.5cm}
\begin{tikzpicture}[yscale=0.11666, xscale=0.35]

\draw [line width = 0.04cm, color=gray]
(-3,-11)--(3,-5)
(-2,-6)--(2,-2)
;

\draw 
(1,-1) node {$\bullet$}
(-1,-3) node {$\bullet$}
(-2,-6) node {$\bullet$}
(-3, -11) node {$\bullet$}
(2,-2) node {$\bullet$}
(3,-5) node {$\bullet$}
(4,-10) node {$\bullet$}
;

\draw [line width = 0.05cm] (1,-1)--(2,-2)
(2,-2)--(3,-5)
(3,-5)--(4,-10)
(1,-1)--(-1,-3)
(-1,-3)--(-2,-6)
(-2,-6)--(-3, -11)
(-3, -11)--(-4,-18)
(-4, -18)--(4,-10);

\draw
(1.6,0.2) node {\tiny $1$}
(3,-2) node {\tiny $1$}
(4,-6.5) node {\tiny $1$}

(-0.1,0) node {\tiny $2$}
(-1.8,-2.5) node {\tiny $1$}
(-3.1,-7) node {\tiny $1$}
(-4.2,-13) node {\tiny $1$}
;

\draw (0,-22) node {$P_1=f_0(P)$};

\end{tikzpicture}

\end{center}

\caption{Applying $f_0$ to an MV polytope. By Proposition \ref{prop:explicit-e}, $f_0$ only affects the part of the polytope above the top active $\alpha_1$ diagonal, so that is all we draw. In each figure, the active diagonals used to continue the procedure are shown in grey. The dashed diagonals are also active, but are being ``replaced" by the solid diagonals that cross them.}
\label{fig:explicit-f}
\end{figure}

\begin{Remark}
There is a slight subtlety in this construction in the case when $d$ is the $\alpha_1$ diagonal $(\omu_\infty,\mu_\infty)$, since the procedure produces new $\alpha_1$ diagonals for all $\tau>0$ (all of which coincide with $(\omu^\infty,\mu^\infty)$,  which is now an $\alpha_1$ diagonal). The procedure actually proceeds smoothly until such time as a new $\alpha_1$ diagonal is formed which is not incident to $\mu^\infty$, which does not happen immediately. 
\end{Remark}

\begin{Remark}
Let $P\in\MV$, and let $\mathbf a$ be the right Lusztig data of $P$.
Then we have $P=P_0$, and Proposition \ref{prop:explicit-e} provides a concrete way to determine the polytope
$f_0^n(P)=P_n$, for all integers $0\leq n\leq a^1$.
One may also describe the action of $f_1$: the same algorithm can be run,
but twisted by a horizontal reflection.
One can run the algorithm in reverse to describe the action of the $e_i$.  In fact, we see that the action of $ e_0 $ slowly destroys active $ \alpha_1 $ diagonals and that once there are no more active $\alpha_1 $ diagonals, then $ e_0 $ acts by adding $ \alpha_0 $ to all the $ \omu_k, \omu^k $ vertices (except $ \omu_0 $).
\end{Remark}

\begin{Remark}
When the highest active $\alpha_1$ diagonal is incident to $\mu_\infty, \mu^\infty, \omu_\infty$ or $\omu^\infty,$ the above algorithm can affect $\olambda$.
\end{Remark}

\begin{proof}[Proof of Proposition \ref{prop:explicit-e}]
Let $0\leq t<a^1$. For $\tau>t$, let $Q_\tau$ be the GGMS polytope defined
by the three conditions listed in the proposition (we use
Remark \ref{Rem:determbydiag} to construct $Q_\tau$).
It is clear from continuity that 
when $\tau$ increases, a new active $\alpha_1$ diagonal will be created
just before $Q_\tau$ violates Definition \ref{def:characterization}
parts (\ref{top-part}) and (\ref{bottom-part}),
or before an edge length becomes negative.
We also note that the polytope $Q_\tau$ has the correct right Lusztig
data to be $P_\tau$, so as long as it is MV, it coincides with $P_\tau$.
So, the process leads to $P_{t'}$, from where we continue after switching
to a new active $\alpha_1$ diagonal.
\end{proof}

\begin{Proposition} \label{prop:ecom}
Fix $P \in \MV$. If $P$ has an active $\alpha_1$ diagonal, then $f_0^k f_0^* (P) = f_0^* f_0^k (P)$ for all $k$. If $P$ has no active $\alpha_1$ diagonal, then $f_0(P)= f_0^*(P)$. 
\end{Proposition}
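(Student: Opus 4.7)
The approach is to combine the explicit algorithmic description of $f_0$ from Proposition~\ref{prop:explicit-e} with the $*$-symmetry of $\MV$. The key geometric fact I will use is that $f_0$ alters only the strict region of $P$ lying above the highest active $\alpha_1$ diagonal $d_{\text{top}}$, while the $*$-conjugate operator $f_0^*$ alters only the strict region below the lowest active $\alpha_1$ diagonal $d_{\text{bot}}$. The latter statement is obtained by applying Proposition~\ref{prop:explicit-e} to $*P$: since $*$ is negation, it preserves parallelism with $\alpha_1$ and interchanges top and bottom, so the highest active $\alpha_1$ diagonal of $*P$ corresponds to the lowest one of $P$, and conjugating back by $*$ gives the claimed localization of $f_0^*$.

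In the case where $P$ has at least one active $\alpha_1$ diagonal, the strict regions above $d_{\text{top}}$ and below $d_{\text{bot}}$ are disjoint. Because the recipe of Proposition~\ref{prop:explicit-e} keeps everything at or below $d_{\text{top}}$ frozen, no new active $\alpha_1$ diagonal can be created below $d_{\text{top}}$ in any iterate $f_0^k(P)$, so $d_{\text{bot}}$ remains the lowest active $\alpha_1$ diagonal of $f_0^k(P)$ and $f_0^*$ acts on $f_0^k(P)$ exactly as on $P$. A symmetric remark applies to $f_0$ on $f_0^*(P)$. Consequently $f_0^k$ and $f_0^*$ operate on disjoint parts of the polytope and so they commute; moreover the two compositions are simultaneously defined or undefined, and when defined their right and left Lusztig data coincide, so uniqueness in Theorem~\ref{th:Lusztig-data} yields $f_0^k f_0^*(P) = f_0^* f_0^k(P)$.

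For the case in which $P$ has no active $\alpha_1$ diagonal, I would invoke the remark immediately after Proposition~\ref{prop:explicit-e}, which states that $e_0$ acts as translation by $\alpha_0$ of every $\omu_k, \omu^k$ with $k \geq 1$, fixing $\omu_0$ and the right-hand vertices $\mu_k, \mu^k$. Inverting, $f_0$ translates these same vertices by $-\alpha_0$. A direct substitution into the formulas $\omu_k - \omu_{k-1} = \oa_k(\alpha_0+(k-1)\delta)$, $\omu^{k-1} - \omu^k = \oa^k(\alpha_1+(k-1)\delta)$, $\mu^{k-1} - \mu^k = a^k(\alpha_0+(k-1)\delta)$, and $\mu_k - \mu_{k-1} = a_k(\alpha_1+(k-1)\delta)$ shows that only $\oa_1$ and $a^1$ are altered, each decreasing by $1$, while all other right and left Lusztig data are preserved. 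By definition $f_0^*(P)$ is the unique element of $\MV$ whose left Lusztig data agrees with that of $P$ except for $\oa_1$ decreased by $1$, so its left data equals that of $f_0(P)$, which forces $f_0(P) = f_0^*(P)$ via Theorem~\ref{th:Lusztig-data}.

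The main obstacle I anticipate is the localization claim in the first paragraph: one needs to verify carefully that the iterative step of Proposition~\ref{prop:explicit-e} --- which can introduce new active $\alpha_1$ diagonals above $d_{\text{top}}$ as the algorithm progresses --- never creates a new active $\alpha_1$ diagonal below $d_{\text{top}}$, and one must handle with care the boundary situations in which $d_{\text{top}}$ or $d_{\text{bot}}$ is one of the middle diagonals such as $(\omu_\infty,\mu_\infty)$ or $(\omu^\infty,\mu^\infty)$, where the subtlety flagged in the Remark just after Proposition~\ref{prop:explicit-e} comes into play.
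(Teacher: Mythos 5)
Your overall strategy is the same as the paper's. For the first case the argument matches: cut along an active $\alpha_1$ diagonal, observe via Proposition~\ref{prop:explicit-e} that $f_0$ only modifies the part above it and (by the $*$-symmetry) $f_0^*$ only the part below it, so the two operators act on disjoint halves and commute; your refinement using $d_{\text{top}}$ and $d_{\text{bot}}$ separately is harmless but unnecessary, since a single active $\alpha_1$ diagonal already separates the two regions.

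In the second case, however, there is a gap: you invoke the Remark following Proposition~\ref{prop:explicit-e} as if it were an established fact, but that Remark is an informal observation whose justification for a \emph{full unit step} is exactly the point at issue. A priori, the algorithm of Proposition~\ref{prop:explicit-e}, started from a polytope with no active $\alpha_1$ diagonal, could create a new active $\alpha_1$ diagonal at some time $0<t<1$, after which $f_0$ would no longer act by uniform translation of the left-hand vertices and the identification with $f_0^*$ would fail. The paper closes this by an integrality argument: writing $\omu_k(t)=\omu_k-t\alpha_0$ and $\omu^k(t)=\omu^k-t\alpha_0$, the quantities $(\omu_k(t)-\mu_{k+1},\omega_0)$ and $(\omu^k(t)-\mu^{k-1},\omega_0)$ have the form $c-t$ with $c$ a \emph{positive integer} (positivity because no $\alpha_1$ diagonal is active in $P$, integrality because $P\in\MV$ is integral), so none can vanish before $t=1$. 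You should supply this step rather than cite the Remark. A smaller point: you assert that the right-hand vertices $\mu_k,\mu^k$ are all fixed and yet that $a^1$ decreases by $1$; these are compatible only because the top vertex $\mu^0=\omu^0$ does move by $-\alpha_0$ (it is one of the $\omu^k$, which the paper's Remark excludes only for $\omu_0$), so your statement ``with $k\geq1$'' should be corrected.
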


\begin{proof}
Assume $P$ has an active $\alpha_1$ diagonal $d$, and let $P_\ell$ and $P_h$ be the two smaller polytopes obtained by cutting along $d$, where $P_\ell$ is below $P_h$. Then by Proposition \ref{prop:explicit-e}, $f_0$ and $f_0^*$ preserve the diagonal $d$, $f_0$ only affects $P_h$, and $f_0^*$ only affects $P_\ell$ (where the statements about $f_0^*$ follow by symmetry). In particular $f_0^k f_0^*(P) = f_0^*f_0^k(P)$ for all $k \geq 0$. 

If $P$ does not have an active $\alpha_1$ diagonal, the algorithm for calculating $f_0(P)$ given in Proposition \ref{prop:explicit-e} must proceed without creating a new active $\alpha_1$ diagonal at time $t<1$: in fact, denoting by $\omu_k(t)=\omu_k-t\alpha_0$ and $\omu^k(t)=\omu^k-t\alpha_0$ the vertices of $P_t$, the quantities $(\omu_k(t)-\mu_{k+1},\omega_0)$ and $(\omu^k(t)-\mu^{k-1},\omega_0)$ are of the form $c-t$ with $c$ a positive integer, so cannot vanish before $t$ reaches the value $1$. It follows that $f_0(P)= f_0^*(P)$.
\end{proof}

\begin{Remark} \label{rem:hn}
Proposition~\ref{prop:ecom} implies that, for all $P \in \MV$ and $k \geq 0$, $\varphi_0^*(f_0^kP)\leq\varphi_0^*(P).$
\end{Remark}

\begin{Lemma} \label{lem:ineq-active}
For all $P\in\MV$ we have $\varphi_0(P)+\varphi_0^*(P)-(\alpha_0,\wt(P))\geq0$,
with equality if and only if all $\alpha_0$-diagonals are active in $P$.
\end{Lemma}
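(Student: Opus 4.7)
The plan is to reduce the inequality to a manifestly nonnegative sum by combining a closed-form expression for $\varphi_0(P) + \varphi_0^*(P) - (\alpha_0, \wt(P))$ with a summation-by-parts identity that rewrites each piece as a positively weighted sum of the slack quantities supplied by the MV inequalities.

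First I would compute $(\alpha_0, \wt(P))$ from both the right and the left Lusztig data of $P$. Using $\wt(P) = \sum_k a_k(\alpha_1 + (k-1)\delta) + |\lambda|\delta + \sum_k a^k(\alpha_0 + (k-1)\delta)$ together with $(\alpha_0,\alpha_0)=2$, $(\alpha_0,\alpha_1)=-2$, $(\alpha_0,\delta)=0$ gives $(\alpha_0,\wt(P)) = 2\sum_k a^k - 2\sum_k a_k$; the analogous expansion from the left gives $2\sum_k \oa_k - 2\sum_k \oa^k$. Averaging these two expressions and using $\varphi_0(P) = a^1$ and $\varphi_0^*(P) = \oa_1$, a routine rearrangement yields
\begin{equation*}
\varphi_0(P) + \varphi_0^*(P) - (\alpha_0, \wt(P)) = L + U,
\end{equation*}
where $L := \sum_k a_k - \sum_{k \geq 2} \oa_k$ depends only on the bottom data and $U := \sum_k \oa^k - \sum_{k \geq 2} a^k$ only on the top.

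Next, for each $k \geq 2$ I would introduce the bottom slack
\[
\alpha_k := \sum_{j=1}^{k-1} j\, a_j - \sum_{j=2}^{k} (j-1)\, \oa_j,
\]
and check directly that $\alpha_k = -(\omu_k - \mu_{k-1}, \omega_1)$. Part~(\ref{top-part}) of Definition~\ref{def:characterization} then gives $\alpha_k \geq 0$, with $\alpha_k = 0$ precisely when the bottom $\alpha_0$-diagonal $(\omu_k, \mu_{k-1})$ is active. Noting that $\alpha_1 = 0$ and verifying the telescoping identity $\alpha_{k+1} - \alpha_k = k(a_k - \oa_{k+1})$, an Abel summation applied to $L = \sum_{k \geq 1}(a_k - \oa_{k+1})$ produces
\[
L = \sum_{k \geq 2} \frac{\alpha_k}{k(k-1)};
\]
the boundary term at infinity vanishes because the Lusztig data are eventually zero, so $\alpha_k$ stabilizes and $\alpha_{N+1}/N \to 0$. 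Defining the corresponding top slack $\alpha^k := (\mu^k - \omu^{k-1}, \omega_1)$, which is nonnegative by part~(\ref{bottom-part}) of Definition~\ref{def:characterization} and vanishes exactly when the top $\alpha_0$-diagonal $(\mu^k, \omu^{k-1})$ is active, the same calculation yields $U = \sum_{k \geq 2} \alpha^k/(k(k-1))$.

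Since every $\alpha_k$ and every $\alpha^k$ is nonnegative, both $L$ and $U$ are nonnegative, which establishes the desired inequality $\varphi_0(P) + \varphi_0^*(P) - (\alpha_0, \wt(P)) \geq 0$. Equality forces each $\alpha_k$ and each $\alpha^k$ to vanish, and this is exactly the statement that every $\alpha_0$-diagonal of $P$ (both top and bottom) is active, giving the converse. The one step that requires care is the Abel transformation producing the formula for $L$: it is a standard manipulation, but one must justify the vanishing of the boundary contribution at infinity by appealing to the eventual stabilization of the sequences $(a_k), (\oa_k)$.
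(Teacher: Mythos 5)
Your proof is correct and follows essentially the same route as the paper's: both compute $(\alpha_0,\wt(P))$ from the two Lusztig data, reduce the quantity to $\sum_k(a_k-\oa_{k+1})+\sum_k(\oa^k-a^{k+1})$, and then rewrite this as a sum of the diagonal slacks $-(\omu_k-\mu_{k-1},\omega_1)$ and $(\mu^k-\omu^{k-1},\omega_1)$ with positive weights $1/(k(k-1))$. The only difference is that you spell out the Abel summation and the vanishing boundary term explicitly, which the paper leaves implicit.
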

\begin{proof}
Let $\mathbf a$ and $\overline{\mathbf a}$ the right and left
Lusztig data of $P$. We first notice that
\begin{align*}
(\alpha_0,\wt(P))
&=2(a^1+a^2+a^3+\cdots-a_1-a_2-a_3-\cdots)\\
&=2(\oa_1+\oa_2+\oa_3+\cdots-\oa^1-\oa^2-\oa^3-\cdots).
\end{align*}
It follows that
$$\varphi_0(P)+\varphi_0^*(P)-(\alpha_0,\wt(P))=
(a_1-\oa_2)+(a_2-\oa_3)+(a_3-\oa_4)+\cdots
+(\oa^1-a^2)+(\oa^2-a^3)+(\oa^3-a^4)+\cdots.$$
In addition, the inequalities coming from the $\alpha_0$ diagonals
(see Definition~\ref{def:characterization}~(\ref{top-part})
and~(\ref{bottom-part})) give
$$0\leq(\omega_1,\mu_k-\omu_{k+1})=(a_1-\oa_2)+2(a_2-\oa_3)
+3(a_3-\oa_4)+\cdots+k(a_k-\oa_{k+1}),$$
with equality if and only if $(\omu_{k+1},\mu_k)$ is active, and
$$0\leq(\omega_1,\mu^{k+1}-\omu^k)=(\oa^1-a_2)+2(\oa^2-a^3)
+3(\oa^3-a^4)+\cdots+k(a^{k+1}-\oa^k),$$
with equality if and only if $(\mu^{k+1},\omu^k)$ is active.
Thus
$$
\begin{aligned}
\varphi_0(P) + \varphi_0^*(P) - (\alpha_0, \wt(P)) & = \sum_{r=1}^\infty (a_r -
\oa_{r+1}) + \sum_{r=1}^\infty (\oa^r -a^{r+1})  \\
& =
 \sum_{k = 1}^\infty \frac{1}{k(k+1)} \left( (\omega_1, \mu_k - \omu_{k+1}) +
(\omega_1, \mu^{k+1} - \omu^k) \right) \ge 0 
\end{aligned}$$
with equality if and only if all diagonals
$(\omu_{k+1},\mu_k)$ and $(\mu^{k+1},\omu^k)$ are active.
\end{proof}

\begin{Lemma} \label{lem:all-active}
A polytope $P\in\MV$ has an active $\alpha_1$-diagonal if and
only if $\eps_0^*((f_0)^{\max}(P))-\varphi_0(P)\geq0$.
\end{Lemma}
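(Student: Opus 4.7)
The plan is to evaluate both sides of the claimed equivalence in terms of the Lusztig data. Setting $n := \varphi_0(P) = a^1$ and $P' := f_0^n(P)$, we have $\wt(P') = \wt(P) - n\alpha_0$ and $\varphi_0^*(P') = \oa_1(P')$, so a direct computation gives
\[
\eps_0^*(P') - \varphi_0(P) = \oa_1(P') - (\alpha_0, \wt(P)) + n.
\]
The lemma therefore reduces to showing that $\oa_1(P') \geq (\alpha_0, \wt(P)) - n$ if and only if $P$ has an active $\alpha_1$-diagonal. I will establish this by tracking $\oa_1$ through the sequence $P, f_0(P), \dots, f_0^n(P)$ in the two cases, and combining with Lemma~\ref{lem:ineq-active}.

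If $P$ has an active $\alpha_1$-diagonal, let $d$ be the topmost such diagonal. Proposition~\ref{prop:explicit-e} says that $f_0$ modifies only the vertices strictly above $d$, while $d$ is kept in the complete system of active diagonals used by the algorithm and so remains active. Consequently $f_0(P)$ is again in this case, and the bottom-left edge of length $\oa_1$, which lies below $d$, is unchanged; by induction $\oa_1(P') = \oa_1(P)$. Lemma~\ref{lem:ineq-active} then yields $\oa_1(P) + n - (\alpha_0, \wt(P)) \geq 0$, which is exactly the desired inequality.

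If instead $P$ has no active $\alpha_1$-diagonal, then for each $k \geq 2$ the fact that $(\mu_k, \omu_{k-1})$ is not $\alpha_1$-active forces $(\omu_k, \mu_{k-1})$ to be $\alpha_0$-active, and likewise upstairs for $(\mu^k, \omu^{k-1})$. All the $\alpha_0$-diagonals appearing in the proof of Lemma~\ref{lem:ineq-active} are therefore active, so that inequality becomes an equality: $\oa_1(P) + n = (\alpha_0, \wt(P))$. Assuming $n \geq 1$, Proposition~\ref{prop:ecom} gives $f_0(P) = f_0^*(P)$, and from the description of $f_0^*$ we read off $\oa_1(f_0(P)) = \oa_1(P) - 1$. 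Since $f_0$ never increases $\oa_1$ (it preserves it in the first case and decreases it in the second), iterating gives $\oa_1(P') \leq \oa_1(P) - 1 = (\alpha_0, \wt(P)) - n - 1$, whence $\eps_0^*(P') - \varphi_0(P) \leq -1 < 0$. The situation $n = 0$ does not arise here for non-trivial $P$: when $a^1 = 0$, Proposition~\ref{pr:a1ora1} forces $\oa^1 > 0$, and a short computation with the defining inequalities then shows that $(\omu^2, \mu^1)$ is necessarily $\alpha_1$-active, contradicting our hypothesis.

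The delicate point to verify carefully is the persistence of the active $\alpha_1$-diagonal $d$ under repeated applications of $f_0$. This is handled by the explicit algorithm in Proposition~\ref{prop:explicit-e}: even when new $\alpha_1$-active diagonals are created above $d$ and become the new top one, $d$ itself is still active because everything at and below $d$ is unchanged.
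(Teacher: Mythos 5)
Your proposal is correct and follows essentially the same route as the paper: both reduce the inequality to $\varphi_0^*(f_0^n P) + n - (\alpha_0,\wt(P)) \geq 0$ (you just write $\varphi_0^*$ as $\oa_1$), both handle the active case by observing via Proposition \ref{prop:explicit-e} that the part of the polytope below the top active $\alpha_1$-diagonal—hence $\oa_1$—is frozen under $f_0$ and then invoke Lemma \ref{lem:ineq-active}, and both handle the inactive case by noting that all $\alpha_0$-diagonals are then active (so Lemma \ref{lem:ineq-active} is an equality), that the first $f_0$ coincides with $f_0^*$ and strictly drops $\oa_1$, and that subsequent applications cannot raise it. Your treatment of the $n=0$ degeneracy via Proposition \ref{pr:a1ora1} matches the paper's argument that $a^1=0$ forces $(\omu^2,\mu^1)$ to be $\alpha_1$-active.
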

\begin{proof}
Let $P\in\MV$. Set $n=\varphi_0(P)$ and $Q=f_0^n(P)$. We have
$$\eps_0^*((f_0)^{\max}(P))-\varphi_0(P)
=\varphi_0^*(Q)-(\alpha_0,\wt(Q))+\varphi_0(P)-2n
=\varphi_0^*(Q)+\varphi_0(P)-(\alpha_0,\wt(P)).$$

Suppose first that $P$ has an active $\alpha_1$-diagonal. Then by (the proof of)
Proposition~\ref{prop:ecom}
$\varphi_0^*(P)=\varphi_0^*(Q)$. Hence using Lemma \ref{lem:ineq-active}
$$\eps_0^*((f_0)^{\max}(P))-\varphi_0(P)
=\varphi_0^*(P)+\varphi_0(P)-(\alpha_0,\wt(P))\geq0$$
as desired.

Suppose now that all $\alpha_1$-diagonals are inactive in $P$.
Then $n=\varphi_0(P)>0$, for otherwise we would have $\mu^1=\mu^0$,
which would force the $\alpha^1$-diagonal $(\omu^2,\mu^1)$
to be active. It then follows from Proposition~\ref{prop:ecom} and Remark \ref{rem:hn} that $\varphi_0^*(Q)=\varphi_0^*(f_0^nP)\leq\varphi_0^*(f_0P)=
\varphi_0^*(f_0^*P)<\varphi_0^*(P)$, and therefore
$$\eps_0^*((f_0)^{\max}(P))-\varphi_0(P)
<\varphi_0^*(P)+\varphi_0(P)-(\alpha_0,\wt(P)).$$
Since all $\alpha_0$-diagonals are active in $P$ the right-hand
side is $0$. 
\end{proof}

To finish the proof of Theorem \ref{th:is-crystal}, we must verify \eqref{abc1}, \eqref{abc2} and \eqref{abc3}. 
For $i=0, j=1$, \eqref{abc1} follows because $f_0^*$ only affect $\oa_1$ (not the rest of $\bf \oa$) and $f_1$ only affect $\oa^1$. For $i=0$, \eqref{abc2} and \eqref{abc3} follow from Lemma \ref{lem:all-active} and Proposition \ref{prop:ecom}. The other cases follow by symmetry. \qed

\section{$A_2^{(2)}$ MV polytopes}
We now describe MV polytopes for the only other rank two affine root system. One could perhaps rework the proof as in the $\asl_2$ case and directly prove that the MV polytopes we describe below have the desired properties. However, we find it easier to use Kashiwara similarity of crystals (Theorem \ref{th:similarity}), which provides an embedding of $B^{A_2^{(2)}}(-\infty)$ into $B^{\asl_2}(-\infty)$, where we use superscripts to distinguish data related to different root systems. There is a corresponding vector space isomorphism from the real spans of the $\asl_2$ simple roots to the real span of the $A_2^{(2)}$ simple roots, and essentially our $A_2^{(2)}$ MV polytopes are the pullbacks of certain $\asl_2$ MV polytopes under this map. 

\subsection{The polytopes}
Recall that $A_2^{(2)}$ is the affine Kac-Moody algebra with symmetrized Cartan matrix
\begin{equation}
\tilde N=\left(
\begin{array}{rr}
2 & -4 \\
-4 & 8
\end{array}
\right).
\end{equation}
Let $\tilde \alpha_0$ and $\tilde \alpha_1$ be the simple roots, where $(\tilde \alpha_0, \tilde \alpha_0)=2$ and $(\tilde \alpha_1, \tilde \alpha_1)=8$. The set of positive roots of $A_2^{(2)}$ is $\Delta^+_{re}  \sqcup \Delta^+_{im}$, where, setting $\tilde \delta= 2 \tilde \alpha_0+\tilde \alpha_1$,
\begin{equation}
\Delta^+_{re} = \{ \tilde \alpha_0+ k\tilde \delta, \tilde \alpha_1+2 k\tilde \delta, \tilde \alpha_0+\tilde \alpha_1+k\tilde \delta, 2 \tilde \alpha_0+ (2k+1)\tilde \delta \mid k \geq 0 \} \quad \text{and} \quad \Delta_{im}^+= \{ k \delta \mid k \geq 1 \}.
\end{equation} 
Here $\Delta^+_{re}$ is the set of real positive roots and $\Delta^+_{im}$ is the imaginary positive roots. These can be drawn as
\begin{center}
\begin{tikzpicture}[scale=0.25]

\draw[line width = 0.05cm, ->] (0,0)--(-3,1);

\draw[line width = 0.05cm, ->] (0,0)--(6,2);
\draw[line width = 0.05cm, ->] (0,0)--(3,3);
\draw[line width = 0.05cm, ->] (0,0)--(0,4);
\draw[line width = 0.05cm, ->] (0,0)--(-3,5);
\draw[line width = 0.05cm, ->] (0,0)--(-6,6);

\draw[line width = 0.05cm, ->] (0,0)--(3,7);
\draw[line width = 0.05cm, ->] (0,0)--(0,8);
\draw[line width = 0.05cm, ->] (0,0)--(-3,9);

\draw[line width = 0.05cm, ->] (0,0)--(6,10);
\draw[line width = 0.05cm, ->] (0,0)--(3,11);
\draw[line width = 0.05cm, ->] (0,0)--(0,12);
\draw[line width = 0.05cm, ->] (0,0)--(-3,13);
\draw[line width = 0.05cm, ->] (0,0)--(-6,14);

\draw node at (0,16) {.};
\draw node at (0,15.6) {.};
\draw node at (0,15.2) {.};

\draw node at (-4,1) {{\small $\tilde \alpha_0$}};
\draw node at (-8.5,6) {{\small $2\tilde \alpha_0 + \tilde \delta$}};
\draw node at (-9.1,14) {{\small $2\tilde \alpha_0 + 3 \tilde \delta$}};

\draw node at (7,2) {{\small $\tilde \alpha_1$}};
\draw node at (8.8,10) {{\small $\tilde \alpha_1 + 2 \tilde \delta$}};

\draw node at (0,13.5) {{\small $k \tilde \delta$}};

\end{tikzpicture}
\end{center}

\begin{Definition} Label the real roots for $A_2^{(2)}$ by 
 $$
 r_k=\begin{cases}
 \tilde\alpha_1+(k-1)\tilde\delta&\text{if $k$ is odd,}\\
 \tilde\alpha_0+\tilde\alpha_1+\frac{k-2}2\tilde\delta&\text{if $k$ is even,}
 \end{cases}
 \qquad
 r^k=\begin{cases}
 \tilde\alpha_0+\frac{k-1}2\tilde\delta&\text{if $k$ is odd,}\\
 2\tilde\alpha_0+(k-1)\tilde\delta&\text{if $k$ is even.}
 \end{cases}
$$
\end{Definition}

\begin{Remark}
In the language of \cite{BKT:2011}, $r_1 \prec r_2 \prec \cdots \prec \delta \prec \cdots \prec r^2 \prec r^1$ and $r^1 \prec r^2 \prec \cdots \prec \delta \prec \cdots \prec r_2 \prec r_1$ are the two possible biconvex orders.
\end{Remark}

\begin{Definition}
An $A_2^{(2)}$ GGMS polytope is a convex polytope in $\text{span}_\br \{ \tilde \alpha_0, \tilde \alpha_1 \}$ such that all edges are parallel  to roots. Such a polytope is called integral if all vertices lie in $\text{span}_\bz \{  \tilde \alpha_0,  \tilde \alpha_1 \}$. 
\end{Definition}

As in the $\asl_2$ case, we can record a GGMS polytope by recording the positions of each vertex. The vertices are labeled $\mu_k, \mu^k, \overline \mu_k, \omu^k, \mu_\infty, \mu^\infty, \omu_\infty, \omu^\infty$, 
where e.g. $\mu_k$ records the vertex after the edge in the direction $r_k$ moving up the polytope on the right side, and $\omu^{k-1}$ records the vertex after the edge in the direction $r_k$ moving up the polytope on the left side. We also associate to a GGMS polytope $P$ the data $\{ a_k, a^k, \oa_k, \oa^k \}$ where e.g. $\mu_k-\mu_{k-1} = a_k r_k$. 

\begin{Definition} A decorated $A_2^{(2)}$ GGMS polytope is a GGMS polytope along with a choice of two sequences 
$\lambda = (\lambda_1 \geq  \lambda_2 \geq \cdots)$ and $\olambda = (\olambda_1 \geq \olambda_2 \geq \cdots)$ of positive real numbers such that $\mu^\infty - \mu_\infty = (\lambda_1 + \lambda_2+ \cdots) \tilde \delta$ and $\omu^\infty - \omu_\infty = (\olambda_1 + \olambda_2 + \cdots) \tilde \delta$ and, for all large enough $N$, $\lambda_N=\olambda_N=0$. 
Such a polytope is called integral if the underlying GGMS polytope is integral and all $\lambda_k, \olambda_k$ are integers.
\end{Definition}

\begin{Definition}
The right Lusztig data of a decorated $A_2^{(2)}$ GGMS polytope is the data ${\bf a} = ( a_k, \lambda_k, a^k )_{k \in \bn}$. The left Lusztig data is
 ${\bf \oa}=( \oa_k, \olambda_k, \oa^k )_{k \in \bn}$.
\end{Definition}

Let $\tilde \omega_0^\vee$ and $\tilde \omega_1^\vee$ be a choice of fundamental coweights (i.e. any choice of elements in weight space satisfying
$(\tilde \omega_i^\vee, \tilde\alpha_j) = \delta_{i,j}$, where we are using the bilinear form to identify weight space and coweight space). 

\begin{Definition}
An $A_2^{(2)}$ MV polytope is a decorated $A_2^{(2)}$ GGMS polytope such that
\begin{enumerate}

\item 
For each $k \geq 1$, $( \omu_{k}-\mu_{k-1}, \tilde \omega_1^\vee) \leq 0$ and $(\mu_{k} -\omu_{k-1}, \tilde \omega_0^\vee) \leq 0$, with at least one of these being an equality.

\item 
For each $k \geq 1$, $(\omu^{k} -\mu^{k-1}, \tilde \omega_0^\vee) \geq 0$ and $(\mu^{k} -\omu^{k-1},\tilde \omega_1^\vee) \geq 0$, with at least one of these being an equality.

\item 
If $(\mu_\infty, \omu_\infty)$ and $(\mu^\infty, \omu^\infty)$ are parallel then $\lambda=\olambda$. Otherwise, one is obtained from the other by removing a part of size $ (\mu_\infty - \omu_\infty, \tilde \alpha_1)/8$.

\item
$ \lambda_1, \olambda_1  \leq (\mu_\infty - \omu_\infty, \tilde \alpha_1)/8.$
\end{enumerate}
$P$ is called integral if it is integral as a decorated GGMS polytope. We denote by $\MV^{A_2^{(2)}}$ the set of integral $A_2^{(2)}$ MV polytopes up to translation. 

\end{Definition}

\subsection{Proof that they realize $B^{A_2^{(2)}}(-\infty)$}
Let $V= \text{span}_\br(\alpha_0, \alpha_1)$ and $\tilde V= \text{span}_\br(\tilde \alpha_0, \tilde \alpha_1)$. Consider the map $\gamma: V \rightarrow \tilde V$ defined by $\alpha_0 \mapsto \tilde \alpha_0$, $\alpha_1 \mapsto \tilde \alpha_1/2$. This sends each positive $\asl_2$ root to a multiple of a positive  $A_2^{(2)}$ root (the multiple is either $1$ or $1/2$), and is a bijection on root directions (i.e. roots up to scalar). It follows that $\gamma$ defines a bijection from real $\asl_2$ GGMS polytopes to real $A_2^{(2)}$ GGMS polytopes. Extend this map to a map of decorated GGMS polytopes by $\gamma(\lambda)_k= \lambda_k/2,$ $\gamma(\olambda)_k= \olambda_k/2$.

\begin{Lemma}  \label{lem:gbij}
$\gamma^{-1}(\MV^{A_2^{(2)}}) \subset \MV^{\asl_2}.$
\end{Lemma}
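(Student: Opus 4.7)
The linear bijection $\gamma\colon V\to\tilde V$ sends each positive $\asl_2$ root to a positive real multiple of a positive $A_2^{(2)}$ root (as noted just above the statement), and therefore carries $\asl_2$ GGMS polytopes bijectively to $A_2^{(2)}$ GGMS polytopes. My plan is to check that under this bijection each of the four conditions defining an $\asl_2$ MV polytope is equivalent to the corresponding condition for an $A_2^{(2)}$ MV polytope, and that integrality of $\gamma(P)$ forces integrality of $P$.

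The first step is to match vertex labels. A case-by-case calculation shows that $\gamma(\alpha_1+(k-1)\delta)$ is a positive multiple of $r_k$ and $\gamma(\alpha_0+(k-1)\delta)$ is a positive multiple of $r^k$, so the $\asl_2$-vertices $\mu_k,\omu_k,\mu^k,\omu^k$ of $P$ become the like-named $A_2^{(2)}$-vertices of $\gamma(P)$; and since $\gamma(\delta)=\tilde\delta/2$, the decorations satisfy $\gamma(\lambda)_k=\lambda_k/2$ and $\gamma(\overline\lambda)_k=\overline\lambda_k/2$. Next I would check the inequalities. On $\text{span}_\br(\alpha_0,\alpha_1)$ the pairing $(\,\cdot\,,\omega_i)$ extracts the $\alpha_i$-coefficient of a vector, and on $\text{span}_\br(\tilde\alpha_0,\tilde\alpha_1)$ the pairing $(\,\cdot\,,\tilde\omega_i^\vee)$ extracts the $\tilde\alpha_i$-coefficient. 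Since $\gamma(x\alpha_0+y\alpha_1)=x\tilde\alpha_0+(y/2)\tilde\alpha_1$, corresponding coefficients have the same sign and vanish simultaneously, so conditions (1) and (2) for the two types of MV polytopes are equivalent. For (3) and (4), a direct computation using the entries of the Cartan matrices gives
$$\frac{(\gamma(\mu_\infty)-\gamma(\omu_\infty),\tilde\alpha_1)}{8}=\tfrac12\cdot\frac{(\mu_\infty-\omu_\infty,\alpha_1)}{2},$$
and since the decorations $\lambda,\overline\lambda$ also scale by $\tfrac12$ under $\gamma$, the partition conditions transfer.

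For integrality, if $\gamma(P)\in\MV^{A_2^{(2)}}$ has vertices in $\bz\tilde\alpha_0\oplus\bz\tilde\alpha_1$ then $P$ has vertices in $\bz\alpha_0\oplus 2\bz\alpha_1\subset\bz\alpha_0\oplus\bz\alpha_1$, and the $\asl_2$-partition entries $\lambda_k=2\gamma(\lambda)_k$ are integers. Putting everything together gives $\gamma^{-1}(\MV^{A_2^{(2)}})\subset\MV^{\asl_2}$. There is no serious obstacle here; the proof is really just bookkeeping, and the only subtle point is making sure the edge-direction indexing on the two sides is compatible under $\gamma$, which is dealt with once and for all in the vertex-matching step.
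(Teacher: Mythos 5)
Your proof is correct and follows the same route as the paper: the paper's own proof is a one-line appeal to the fact that $\gamma^{-1}$ carries lattice points of $\mathrm{span}_{\br}(\tilde\alpha_0,\tilde\alpha_1)$ to lattice points of $\mathrm{span}_{\br}(\alpha_0,\alpha_1)$, leaving implicit the routine verification (which you write out, correctly) that the four defining conditions and the vertex/decoration labellings correspond under $\gamma$.
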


\begin{proof}
This follows from the observation that $\gamma^{-1}$ of a lattice point in $\text{span}_\br(\tilde \alpha_0, \tilde \alpha_1)$ is a lattice point in $\text{span}_\br(\alpha_0, \alpha_1)$. 
\end{proof}

\begin{Lemma} \label{lem:3C}
For $P \in \MV^{\asl_2}$, $\gamma(P) \in \MV^{A_2^{(2)}}$ if and only if any of the following three equivalent conditions  hold, where ${\bf a}$ and ${\bf \oa}$ are the right and left Lusztig data for $P$:
\begin{enumerate}
\item \label{ll1} $a_k, \oa^k$ are even whenever $k$ is odd, $a^k, \oa_k$ are even whenever $k$ is even, all $\lambda_k, \olambda_k$ are even. 

\item \label{ll2} $a_k$ is even whenever $k$ is odd, $a^k$ is even whenever $k$ is even, all $\lambda_k$ are even. 

\item \label{ll3} $\oa_k$ is even whenever $k$ is even, $\oa^k$ is even whenever $k$ is odd, all $\olambda_k$ are even. 
\end{enumerate}
\end{Lemma}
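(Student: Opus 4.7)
The plan is to compute $\gamma$ explicitly on the relevant $\asl_2$ root directions, use this to reinterpret (ii) and (iii) as one-sided integrality conditions for $\gamma(P)$, and then show that for $P\in\MV^{\asl_2}$ the two one-sided conditions coincide.

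First I would verify, using $\gamma(\alpha_0)=\tilde\alpha_0$, $\gamma(\alpha_1)=\tilde\alpha_1/2$ and hence $\gamma(\delta)=\tilde\delta/2$, the identities
$$\gamma(\alpha_1+(k-1)\delta)=\begin{cases}r_k/2&\text{$k$ odd,}\\ r_k&\text{$k$ even,}\end{cases}\qquad \gamma(\alpha_0+(k-1)\delta)=\begin{cases}r^k&\text{$k$ odd,}\\ r^k/2&\text{$k$ even.}\end{cases}$$
I would also check that $(\gamma(v),\tilde\omega_i^\vee)$ is a positive scalar multiple of $(v,\omega_i)$ for $v$ in the $\asl_2$ weight lattice and $i\in\{0,1\}$, so the defining MV inequalities from Definition~\ref{def:characterization} transfer verbatim from $P$ to $\gamma(P)$. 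Thus the only non-shape content in the assertion ``$\gamma(P)\in\MV^{A_2^{(2)}}$'' is the integrality of $\gamma(P)$.

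Next I would observe that for a lattice point $x\alpha_0+y\alpha_1$, the image $\gamma(x\alpha_0+y\alpha_1)=x\tilde\alpha_0+(y/2)\tilde\alpha_1$ lies in $\mathrm{span}_\bz(\tilde\alpha_0,\tilde\alpha_1)$ iff $y$ is even. Translating so $\mu_0=0$, the $\alpha_1$-coefficient of $\mu_k-\mu_{k-1}=a_k(\alpha_1+(k-1)\delta)$ is $ka_k$; requiring every right vertex and every right decoration subdivision point of $\gamma(P)$ to lie in the $A_2^{(2)}$-lattice thus unfolds step by step to precisely condition (ii), and the symmetric computation on the left side gives (iii). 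In particular (i) $=$ (ii)$\wedge$(iii) is exactly the statement that $\gamma(P)\in\MV^{A_2^{(2)}}$, so the lemma reduces to proving (ii)$\Leftrightarrow$(iii) for $P\in\MV^{\asl_2}$.

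The hard part will be this equivalence. My plan is to induct on the height $(\wt(P),\omega_0+\omega_1)$, mirroring the recursive construction of $P_{\mathbf a}$ from Theorem~\ref{th:Lusztig-data}. In each base case of Lemmas~\ref{le:a_ka_k+1}--\ref{le:basecases}, the explicit closed form for $\overline{\mathbf a}$ in terms of $\mathbf a$ allows a direct mod-$2$ check showing (ii) and (iii) single out the same set of Lusztig data. For the inductive step, $P_{\mathbf a}$ is glued out of smaller MV polytopes along active diagonals: an $\alpha_0$-active diagonal automatically equalizes the $\alpha_1$-parity of its two endpoints, while an $\alpha_1$-active diagonal of length $c$ shifts this parity by $c$, and the combinatorial constraints from MV-ness of the glued pieces force $c$ to have the parity predicted by (ii). The main obstacle is propagating these parities through each case of the inductive step while simultaneously tracking the parities of the auxiliary Lusztig data $\mathbf b,\mathbf c,\mathbf d$ appearing in the proof of Theorem~\ref{th:Lusztig-data}. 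A cleaner alternative I would also consider is to iterate the algorithm from Remark~\ref{rem:G=phi} to get analogous explicit formulas for every $\oa_k$, $\oa^k$ and $\olambda_k$, and then verify (iii) by a direct mod-$2$ computation, bypassing the case analysis entirely.
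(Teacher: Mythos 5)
Your reduction is the same as the paper's: the defining inequalities transfer under $\gamma$ because $(\gamma(v),\tilde\omega_i^\vee)$ is a positive multiple of $(v,\omega_i)$, and conditions (ii), (iii), (i) are exactly the statements that the right boundary, the left boundary, and all of $\gamma(P)$ are integral, so the whole lemma comes down to (ii)$\Leftrightarrow$(iii). Where you diverge is in how that equivalence is established. The paper does not re-run the induction behind Theorem~\ref{th:Lusztig-data}: it cuts $P$ \emph{once} along a complete system of active diagonals (Remark~\ref{rem:cut-into-quads}), so that every piece is a quadrilateral or trapezoid governed by one of the explicit base-case Lemmas~\ref{le:a_ka_k+1}--\ref{le:basecases}, and then verifies mod $2$, piece by piece going around the polytope, that the right-hand parities force the left-hand ones. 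This yields a short, non-recursive argument whose only inputs are the closed formulas of those lemmas. Your primary plan (tracking the parities of $\mathbf b,\mathbf c,\mathbf d$ through every branch of the inductive proof) should work but multiplies the case analysis considerably, and your fallback via Remark~\ref{rem:G=phi} is weaker than you suggest: that remark only gives $\oa_1$, and closed formulas for the higher $\oa_k$, $\oa^k$, $\olambda_k$ are not available in the paper. The one point where your sketch glosses over the actual content is the sentence asserting that ``MV-ness of the glued pieces forces'' an $\alpha_1$-active diagonal of length $c$ to have the parity predicted by (ii): this is precisely what has to be proved, and it does not follow from weight bookkeeping alone (the $\alpha_1$-coordinate identity $\sum_j ja_j=\sum_j(j-1)\oa_j+c$ for the piece below the diagonal is circular, since it involves the left data you are trying to control). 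It has to come from the explicit right-to-left formulas on the individual pieces, e.g.\ the relation $\bigl(\begin{smallmatrix}\oa_r\\ \oa_{r+1}\end{smallmatrix}\bigr)=\bigl(\begin{smallmatrix}r&-(r+1)\\-(r-1)&r\end{smallmatrix}\bigr)\bigl(\begin{smallmatrix}a^1\\ a_1\end{smallmatrix}\bigr)$ of Lemma~\ref{le:a_1a^1} and its analogues, whose determinants are $\pm1$ and hence control the parities exactly; so if you pursue your route you still end up needing the same quadrilateral-level checks the paper performs.
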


\begin{proof}
First we check that the three conditions are equivalent. Clearly \eqref{ll1} implies \eqref{ll2} and \eqref{ll3}. To see that \eqref{ll2} implies \eqref{ll1}, cut $P$ into quadrilaterals by Remark \ref{rem:cut-into-quads}. On each quadrilateral, the conditions on ${\bf a}$ from \eqref{ll2} implies the conditions from \eqref{ll1} on the data ${\bf \oa}$, and gluing these together \eqref{ll2} implies \eqref{ll1}. Showing \eqref{ll3} implies \eqref{ll1} is similar.

Condition \eqref{ll1} is exactly the condition that $\gamma$ sends each edge to a multiple of an $A_2^{(2)}$ root, or equivalently that $\gamma(P) \in \MV^{A_2^{(2)}}.$
\end{proof}

The symmetrized Cartan matrices $N$ for $\asl_2$ and $\tilde N$ for $A_2^{(2)}$ satisfy
$
\tilde N = D N D,
$
where $D= \text{diag} \{ 1, 2 \}$. Hence by
Theorem \ref{th:similarity} there is a unique embedding $S: B^{A_2^{(2)}}(-\infty) \rightarrow B^{\asl_2}(-\infty)$ such that $S(b^{N'}_-) = b^N_-$, and, for all $b \in  B^{A_2^{(2)}}(-\infty)$, $S(e_0 b) = e_0 S(b)$ and $S(e_1 b) = e_1^2 S(b)$. 

\begin{Lemma} \label{lem:Sset} 
Let $\MV^S = \{ P_{S(b)}: b \in B^{A_2^{(2)}}(-\infty) \}$. Then $\MV^S = \gamma^{-1} \MV^{A_2^{(2)}}$. 
\end{Lemma}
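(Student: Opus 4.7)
The plan is to characterize $\MV^S$ via the similarity embedding and then compare it with the parity description of $\gamma^{-1}(\MV^{A_2^{(2)}})$ given by Lemma \ref{lem:3C}. By Theorem \ref{th:similarity}, the map $S$ sends $b_-^{A_2^{(2)}}$ to $b_-^{\asl_2}$ and satisfies $S \circ e_i = e_i^{m_i} \circ S$, where $(m_0, m_1) = (1, 2)$. Since $B^{A_2^{(2)}}(-\infty)$ is a lowest weight crystal and $b_-^{\asl_2}$ corresponds to the trivial polytope (Lemma \ref{lem:is-comb}), it follows that $\MV^S$ is precisely the smallest subset of $\MV$ containing the trivial polytope and closed under $e_0$ and $e_1^2$. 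The two inclusions will then be established separately, using Lemma \ref{lem:3C} to switch freely between the parity conditions on the right and left Lusztig data.

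For the inclusion $\MV^S \subseteq \gamma^{-1}(\MV^{A_2^{(2)}})$, I would first note that the trivial polytope satisfies the parity conditions of Lemma \ref{lem:3C} vacuously, and then check that the set $\gamma^{-1}(\MV^{A_2^{(2)}})$ is stable under $e_0$ and $e_1^2$. By Definition \ref{def:crystal-ops}, $e_0$ only modifies $a^1$ (by $+1$), and condition (ii) of Lemma \ref{lem:3C} places no parity constraint on $a^1$; similarly $e_1^2$ only modifies $\oa^1$ (by $+2$), and condition (iii) requires $\oa^1$ to be even, a property that is preserved by adding $2$. Hence both operators keep us inside $\gamma^{-1}(\MV^{A_2^{(2)}})$, and the inclusion follows from the characterization above.

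For the reverse inclusion $\gamma^{-1}(\MV^{A_2^{(2)}}) \subseteq \MV^S$, I would proceed by induction on height: it suffices to show that for any non-trivial $P \in \gamma^{-1}(\MV^{A_2^{(2)}})$ one can write $P = e_0(P')$ or $P = e_1^2(P')$ for some strictly smaller $P' \in \gamma^{-1}(\MV^{A_2^{(2)}})$. Here I would invoke Proposition \ref{pr:a1ora1}: either $a^1 > 0$ or $\oa^1 > 0$. In the first case, $P' = f_0(P)$ is defined, and condition (ii) of Lemma \ref{lem:3C} continues to hold since $a^1$ is unconstrained. In the second case $a^1 = 0$ and $\oa^1 > 0$, but condition (iii) forces $\oa^1$ to be even, so in fact $\oa^1 \geq 2$; then $P' = f_1^2(P)$ is defined and still satisfies (iii). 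The main subtlety, and where the argument could have failed, is precisely this last point: a polytope in $\gamma^{-1}(\MV^{A_2^{(2)}})$ with $a^1 = 0$ and $\oa^1 = 1$ would admit no descent in $\gamma^{-1}(\MV^{A_2^{(2)}})$ by either $f_0$ or $f_1^2$. This pathological possibility is ruled out cleanly by the parity conditions of Lemma \ref{lem:3C}, which I regard as the combinatorial content that makes the similarity embedding match the two sets of MV polytopes.
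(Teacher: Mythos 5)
Your proposal is correct and follows essentially the same route as the paper: closure of $\gamma^{-1}(\MV^{A_2^{(2)}})$ under $e_0,f_0,e_1^2,f_1^2$ via the parity conditions of Lemma \ref{lem:3C}, existence of a descent by $f_0$ or $f_1^2$ for any non-trivial element, and the generation of $B^{A_2^{(2)}}(-\infty)$ from its lowest weight element together with the defining properties of $S$. Your explicit justification of the descent step (that $a^1=0$ together with condition (iii) forces $\oa^1$ even, hence $\oa^1\geq 2$) is precisely the point the paper leaves as ``it is clear.''
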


\begin{proof}
Using Lemma \ref{lem:3C}, one sees that $\gamma^{-1}(\MV^{A_2^{(2)}})$ is closed under the operators $e_0, f_0, e_1^2, f_1^2$. Furthermore, it is clear that, for any non-trivial $P \in \gamma^{-1}(\MV^{A_2^{(2)}})$, either $f_0(P) \neq \emptyset$ or $f_1^2(P) \neq \emptyset$. The claim now follows from the defining conditions on $S$ and the fact that $B^{A_2^{(2)}}(-\infty)$ is generated under the action of $e_0$ and $e_1$ by its lowest weight element. 
\end{proof}

\begin{Theorem}
There is a unique $\tilde P \in \MV^{A_2^{(2)}}$ for each choice of integral right Lusztig data.
Furthermore $\MV^{A_2^{(2)}}$ along with the crystal operators defined as in
Definition \ref{def:crystal-ops}, is a copy of $B^{A_2^{(2)}}(-\infty)$.
\end{Theorem}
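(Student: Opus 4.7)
The plan is to transport Theorems~\ref{th:Lusztig-data} and~\ref{th:is-crystal} across the map $\gamma$, using Lemmas~\ref{lem:gbij},~\ref{lem:3C},~\ref{lem:Sset} together with Kashiwara's similarity theorem as the bridge. First I would make explicit how an integral $A_2^{(2)}$ Lusztig datum $\tilde{\mathbf a}=(\tilde a_k,\tilde\lambda_k,\tilde a^k)$ pulls back through $\gamma^{-1}$ to an $\asl_2$ Lusztig datum $\mathbf a$. Since $\gamma^{-1}(\tilde\alpha_0)=\alpha_0$ and $\gamma^{-1}(\tilde\alpha_1)=2\alpha_1$, the direction $r_k$ pulls back to $2(\alpha_1+(k-1)\delta)$ when $k$ is odd and to $\alpha_1+(k-1)\delta$ when $k$ is even (and symmetrically for the $r^k$); this gives $a_k=2\tilde a_k$ for $k$ odd, $a_k=\tilde a_k$ for $k$ even, $a^k=\tilde a^k$ for $k$ odd, $a^k=2\tilde a^k$ for $k$ even, and $\lambda_k=2\tilde\lambda_k$. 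A direct inspection shows $\mathbf a$ satisfies condition~(\ref{ll2}) of Lemma~\ref{lem:3C}.

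Next I would invoke Theorem~\ref{th:Lusztig-data} to obtain the unique $P_{\mathbf a}\in\MV^{\asl_2}$ with right Lusztig data $\mathbf a$. Since~(\ref{ll2}) implies~(\ref{ll1}) in Lemma~\ref{lem:3C}, we have $P_{\mathbf a}\in\gamma^{-1}(\MV^{A_2^{(2)}})$, so $\gamma(P_{\mathbf a})\in\MV^{A_2^{(2)}}$ is an MV polytope with right Lusztig data $\tilde{\mathbf a}$, which gives existence. For uniqueness, any $\tilde P\in\MV^{A_2^{(2)}}$ with right Lusztig data $\tilde{\mathbf a}$ has $\gamma^{-1}(\tilde P)\in\MV^{\asl_2}$ (by Lemma~\ref{lem:gbij}) with right Lusztig data $\mathbf a$, and Theorem~\ref{th:Lusztig-data} forces $\gamma^{-1}(\tilde P)=P_{\mathbf a}$, whence $\tilde P=\gamma(P_{\mathbf a})$.

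For the crystal structure, Lemma~\ref{lem:Sset} yields a bijection $\psi:=S^{-1}\circ\gamma^{-1}\colon\MV^{A_2^{(2)}}\to B^{A_2^{(2)}}(-\infty)$ sending the trivial polytope to the lowest weight element; it remains to check that $\psi$ intertwines the operators of Definition~\ref{def:crystal-ops} on $\MV^{A_2^{(2)}}$ with the Kashiwara operators on $B^{A_2^{(2)}}(-\infty)$. By Theorem~\ref{th:similarity}, it suffices to verify that $e_0$ on $\MV^{A_2^{(2)}}$ corresponds under $\gamma^{-1}$ to $e_0$ on $\MV^{\asl_2}$, and that $e_1$ on $\MV^{A_2^{(2)}}$ corresponds to $e_1^2$ on $\MV^{\asl_2}$. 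This is immediate from the edge-length accounting of the first paragraph: the top-right edge, in direction $r^1=\tilde\alpha_0$, pulls back to an $\alpha_0$-edge of the same length, so incrementing $a^1$ by $1$ is the same move in either picture; while the top-left edge, in direction $r_1=\tilde\alpha_1$, pulls back to an $\alpha_1$-edge of doubled length, so incrementing $\oa^1$ by $1$ in $A_2^{(2)}$ corresponds to incrementing it by $2$ in $\asl_2$, i.e.\ to $e_1^2$. The $f_i$ case is identical, and Theorem~\ref{th:is-crystal} then lets us conclude that $\psi$ is a crystal isomorphism.

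The hard part here is essentially careful bookkeeping rather than a substantive obstacle: the doubling factors of $\gamma^{-1}$ on root directions have to be matched simultaneously against the parity conditions in Lemma~\ref{lem:3C} and against the doubling factor $m_1=2$ appearing in Kashiwara similarity for the pair $(\asl_2,A_2^{(2)})$. Once the conventions are pinned down, the entire $A_2^{(2)}$ theorem is imported from the $\asl_2$ case with no further analysis required.
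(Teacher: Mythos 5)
Your proof is correct and follows essentially the same route as the paper: transport everything through $\gamma$ and the similarity embedding $S$, using Lemmas \ref{lem:gbij}, \ref{lem:3C} and \ref{lem:Sset} together with Theorems \ref{th:Lusztig-data} and \ref{th:is-crystal}. The paper's own proof is a two-line appeal to Lemma \ref{lem:Sset} plus ``the corresponding results about $\asl_2$ MV polytopes''; you have simply made explicit the bookkeeping (the factors of $2$ on $r_k$, $r^k$ and the matching of the parity conditions with $m_1=2$) that it leaves implicit.
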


\begin{proof}
For $b \in B^{A_2^{(2)}}(-\infty)$, let $\tilde P_b = \gamma (P_{S(b)})$. Lemma \ref{lem:Sset} shows that $b \rightarrow \tilde P_b$ is a bijection between $B^{A_2^{(2)}}(-\infty)$ and $\MV^{A_2^{(2)}}$. All the required properties now follow from the corresponding results about $\asl_2$ MV polytopes.
\end{proof}

\def\cprime{$'$} \def\cprime{$'$} \def\cprime{$'$} \def\cprime{$'$}
  \def\cprime{$'$} \def\cprime{$'$} \def\cprime{$'$}


\begin{thebibliography}{10}

\bibitem{Anderson:2003}
Jared~E. Anderson.
\newblock A polytope calculus for semisimple groups.
\newblock {\em Duke Math. J.}, {\bf 116}(3):567--588, 2003.
\newblock \arxiv{math/0110225}.


\bibitem{BKT:2011} 
Pierre Baumann, Joel Kamnitzer, and Peter Tingley.  Affine Mirkovi\'c-Vilonen polytopes. Preprint. \arxiv{1110.3661}.

\bibitem{BCP:1999}
Jonathan Beck, Vyjayanthi Chari and Andrew Pressley. 
 Algebraic characterization of the affine canonical basis.
 {\em Duke Math. J.}
{\bf 99} No. 3, 455--487, 1999.

\bibitem{Damiani:1993}
Ilaria Damiani.
A basis of type {P}oincar\'e-{B}irkhoff-{W}itt for the quantum algebra of {$\widehat{\rm sl}(2)$}.
{\em J. Algebra} {\bf 161} no. 2, 291--310, 1993. 

\bibitem{Dunlap:2010}
Thomas Dunlap.
Combinatorial representation theory of affine sl2 via polytope calculus.
Northwestern University PhD Thesis, 2010.

\bibitem{Hong&Kang:2000}
Jin Hong and Seok-Jin Kang.
\newblock {\em Introduction to quantum groups and crystal bases}, volume~42 of
  {\em Graduate Studies in Mathematics}.
\newblock American Mathematical Society, Providence, RI, 2002.

\bibitem{Kac:1990}
Victor~G. Kac.
\newblock {\em Infinite-dimensional {L}ie algebras, third edition}.
\newblock Cambridge University Press, Cambridge, 1990.

\bibitem{Kamnitzer:2006}
Joel Kamnitzer.
\newblock The crystal structure on the set of {M}irkovi\'c-{V}ilonen polytopes.
\newblock {\em Adv. Math.}, {\bf 215}(1):66--93, 2007.
\newblock \arxiv{math/0505398}.

\bibitem{Kamnitzer:2010}
Joel Kamnitzer.
\newblock Mirkovi\'c-{V}ilonen cycles and polytopes.
\newblock {\em Ann. of Math. (2)}, {\bf 171} no. 1: 245--294, 2010.
\newblock \arxiv{math/0501365}.

\bibitem{Kashiwara:1993}
M. Kashiwara. The crystal base and Littelmann's refined Demazure character formula, \textit{Duke Math. J.}  \textbf{71}  (1993),  no. 3, 839--858.

\bibitem{Kashiwara:1995}
M.~Kashiwara.
\newblock On crystal bases.
\newblock {\em CMS Conf. Proc.}, 16, 1995.

\bibitem{Kashiwara:1996}
M.~Kashiwara.
\newblock Similarity of crystal bases.
{\em Lie algebras and their representations (Seoul, 1995)}, 177--186, 
{\em Contemp. Math.}, {\bf 194}, Amer. Math. Soc., Providence, RI, 1996.

\bibitem{KS:1997}
Masaki Kashiwara and Yoshihisa Saito.
\newblock Geometric construction of crystal bases.
\newblock {\em Duke Math. J.}, {\bf 89}(1):9--36, 1997.
\newblock \arxiv{q-alg/9606009}.

\bibitem{Lusztig:1993} G. Lusztig. \textit{Introduction to quantum groups,} Birkh\"auser Boston Inc. 1993.

\bibitem{Muthiah:??}
Dinakar Muthiah.
Double {M}{V} cycles and the {N}aito-{S}agaki-{S}aito crystal.
Preprint.
\arxiv{1108.5404v1}.

\bibitem{NSS:2010}
Satoshi Naito, Daisuke Sagaki, and Yoshihisa Saito.
\newblock Toward {B}erenstein-{Z}elevinsky data in affine type {$A$}, {I}:
  Construction of affine analogs.
\newblock { Preprint}.
\newblock \arxiv{1009.4526}.

\end{thebibliography}
\end{document}